\newtheorem{thm}{Theorem}
\newtheorem{lem}[thm]{Lemma}
\newtheorem{cor}[thm]{Corollary}
\newtheorem{prop}[thm]{Proposition}
\newtheorem*{thm*}{Theorem}
\newtheorem*{prop*}{Proposition}
\newtheorem*{cor*}{Corollary}
\newtheorem*{lem*}{lem}
\newtheorem*{MT*}{Main Theorem}
\theoremstyle{definition} %
\newtheorem{defn}[thm]{Definition}
\newtheorem*{defn*}{Definition}
\newtheorem{eg}[thm]{Example}
\theoremstyle{remark} %
\newtheorem{rmk}[thm]{Remark}
\newtheorem*{rmk*}{Remark}
\newtheorem*{rmks*}{Remarks}
\numberwithin{equation}{section}
\numberwithin{thm}{section}
\newcommand{\tA}{\mathsf{A}}
\newcommand{\tB}{\mathsf{B}}
\newcommand{\tC}{\mathsf{C}}
\newcommand{\tD}{\mathsf{D}}
\newcommand{\tE}{\mathsf{E}}
\newcommand{\nat}[1]{#1^\natural}
\newcommand{\R}{\mathbb{R}}
\newcommand{\C}{\mathbb{C}}
\newcommand{\hh}{\mathbb{H}}
\newcommand{\G}{\mathscr{G}}
\newcommand{\Gt}{\widetilde{G}}
\newcommand{\Wt}{\widetilde{W}}
\renewcommand{\th}{\theta}
\DeclareMathOperator{\End}{End}
\DeclareMathOperator{\Id}{Id}
\DeclareMathOperator{\tr}{tr}
\DeclareMathOperator{\Ker}{Ker}
\newcommand{\ot}{\otimes}
\newcommand{\qform}[1]{{\left\langle{#1}\right\rangle}}                   
\DeclareMathOperator{\SO}{SO}
\DeclareMathOperator{\Sp}{Sp}
\DeclareMathOperator{\PSp}{PSp}
\DeclareMathOperator{\Spin}{Spin}
\DeclareMathOperator{\SU}{SU}
\DeclareMathOperator{\GL}{GL}
\newcommand{\s}{\sigma}
\newcommand{\Es}{(E, \sigma)}
\newcommand{\ra}{\rightarrow}
\newcommand{\Kalg}{\overline{K}}
\renewcommand{\ge}{\geqslant}
\DeclareMathOperator{\rk}{rk}
\newcommand{\X}{\mathfrak{X}}
\newenvironment{speceqn}[1]%
{}%
{\addtocounter{equation}{-1}\global\@ignoretrue}
\newcommand{\Z}{\mathbb{Z}}
\newcommand{\Q}{\mathbb{Q}}
\newcommand{\sC}{\mathscr{C}}
\begin{document}

\title[Weakly commensurable subgroups in groups of types $\tB$ and $\tC$]{Weakly
commensurable $S$-arithmetic subgroups in almost
simple  algebraic groups of types $\tB$ and $\tC$}
\author{Skip Garibaldi}
\address{Department of Mathematics and Computer Science, MSC W401, 400 Dowman Dr., Emory University, Atlanta, GA 30322, USA}
\email{skip \text{at} member.ams.org}

\author{Andrei S. Rapinchuk}
\address{Department of Mathematics, University of Virginia, Charlottesville, VA 22904-4137, USA}
\email{asr3x \text{at} virginia.edu}

\subjclass[2010]{20G15 (11E57 14L35 20G30)}

\dedicatory{To Kevin McCrimmon on the occasion of his retirement}

\begin{abstract}
Let $G_1$ and $G_2$ be absolutely almost simple algebraic groups of
types $\tB_{\ell}$ and $\tC_{\ell}$ 
respectively defined over a number field $K$. We determine when
$G_1$ and $G_2$ have the same isomorphism or isogeny classes of maximal
$K$-tori. This leads to the necessary and sufficient conditions for
two Zariski-dense $S$-arithmetic subgroups of $G_1$ and $G_2$ to be
weakly commensurable.
\end{abstract}

\maketitle

\nocite{PrRap:fields}

\setcounter{tocdepth}{1}
\tableofcontents

\section{Introduction and the statement of main results}\label{S:I}

This paper has two interrelated goals: first, to complete the
investigation of weak commensurability of $S$-arithmetic subgroups
of almost simple algebraic groups begun in \cite{PrRap:weakly}, and second, to
contribute to the classical problem of characterizing almost simple
algebraic groups having the same isomorphism or the same isogeny classes of
maximal tori over the field of definition.

Let $G_1$ and $G_2$ be two semi-simple algebraic groups over a field
$F$ of characteristic zero, and let $\Gamma_i \subset G_i(F)$ be a
(finitely generated) Zariski-dense subgroup for $i = 1, 2$. We recall in \S\ref{S:WC} below the notion 
of {\it weak
commensurability} of $\Gamma_1$ and $\Gamma_2$ introduced in \cite{PrRap:weakly}. 
(This notion was inspired by some problems dealing with
isospectral and length-commensurable locally symmetric spaces, and
we state some geometric consequences
of our main results in \eqref{geom1} and \eqref{geom2}.)  We further recall that the mere existence
of Zariski-dense weakly commensurable subgroups
implies that $G_1$ and $G_2$ either have the same Killing-Cartan
type, or one of them is of type $\tB_{\ell}$ and the other is of
type $\tC_{\ell}.$
Moreover, cumulatively the results of \cite{PrRap:weakly},
\cite{PrRap:Deven} and \cite{G:outer} give, by and large, a complete
picture of weak commensurability for $S$-arithmetic subgroups of
almost simple algebraic groups having the {\it same} type.

On the other hand, weak commensurability of $S$-arithmetic subgroups
in the case where $G_1$ is of type $\tB_{\ell}$ and $G_2$ is of type
$\tC_{\ell}$ has not been investigated so far---it was only pointed
out in \cite{PrRap:weakly} that $S$-arithmetic subgroups
corresponding to the split forms of such groups are indeed weakly
commensurable (see also Remark \ref{R:same} below). Our first
theorem provides a complete characterization of the situations where
$S$-arithmetic subgroups in the groups of types $\tB$ and $\tC$ are
weakly commensurable. In its formulation we will employ the
description, introduced in \cite[\S1]{PrRap:weakly}, of
$S$-arithmetic subgroups of $G(F),$ where $G$ is an absolutely
almost simple algebraic group over a field $F$ of characteristic
zero, in terms of triples $(\G, K, S)$ consisting of a number field
$K \subset F,$ a finite subset $S$  of places of $K$, and an
$F/K$-form $\G$ of the adjoint group $\overline{G}$ --- we briefly recall this description in \S\ref{S:BC}.

\medskip

The following definition will enable us to streamline the statements
of our results.

\begin{defn} \label{twin.def}
Let $\G_1$ and $\G_2$ be absolutely almost simple algebraic groups
of types $\tB_{\ell}$ and $\tC_{\ell}$ with $\ell \ge 2$,
respectively, over a number field $K$. We say that $\G_1$ and $\G_2$
are {\it twins} (over $K$) if for each place $v$ of $K$, both groups
are simultaneously either split or anisotropic over the completion
$K_v$.
\end{defn}

\begin{thm}\label{T:1}
Let $G_1$ and $G_2$ be absolutely almost simple algebraic groups
over a field $F$ of characteristic zero having Killing-Cartan types
$\tB_{\ell}$ and $\tC_{\ell}$ $(\ell \geqslant 3)$ respectively, and
let $\Gamma_i$ be a Zariski-dense $(\G_i, K, S)$-arithmetic subgroup
of $G_i(F)$ for $i = 1, 2.$ Then $\Gamma_1$ and $\Gamma_2$ are
weakly commensurable if and only if the groups $\G_1$ and $\G_2$ are
twins.
\end{thm}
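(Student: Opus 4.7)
The plan is to convert weak commensurability of the $S$-arithmetic subgroups into a comparison of maximal $K$-tori in $\G_1$ and $\G_2$, following the paradigm established in \cite{PrRap:weakly}. Their work shows that Zariski-dense weakly commensurable arithmetic subgroups must be defined over the same number field $K$ with the same set of places $S$, so one reduces immediately to a question about the $K$-forms $\G_1, \G_2$ alone. The key translation to exploit is that, via the canonical identification of the Weyl groups of types $\tB_\ell$ and $\tC_\ell$ (both copies of the hyperoctahedral group on $\ell$ letters), weak commensurability of the $\Gamma_i$ is equivalent to the existence of sufficiently many pairs of maximal $K$-tori $T_i \subset \G_i$ whose character modules are isomorphic as $\Gal(\Kalg/K)$-modules under this identification.

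For the direction \textit{twins $\Rightarrow$ weakly commensurable}, I would construct a pair of maximal $K$-tori $T_1 \subset \G_1$ and $T_2 \subset \G_2$ with $\Gal(\Kalg/K)$-isomorphic character modules. The twin hypothesis dictates the local shape of such tori: at the places where both $\G_i$ are split, one takes matched split maximal $K_v$-tori; at the anisotropic places both groups admit essentially unique maximal anisotropic tori whose Galois structures correspond. The Hasse principle for tori in reductive groups (in the form used in \cite{PrRap:weakly}) globalizes these local data to a common $K$-torus realized inside each $\G_i$, and strong approximation produces regular semisimple elements of $\Gamma_i$ lying in these tori, whence weak commensurability.

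For the converse, assume the $\Gamma_i$ are weakly commensurable. The matching-tori criterion, applied to sufficiently many pairs of weakly commensurable regular semisimple elements, guarantees that for suitably generic maximal $K$-tori $T_1 \subset \G_1$ there exists a matched $T_2 \subset \G_2$ with $\Gal(\Kalg/K)$-isomorphic character module. Suppose for contradiction that the twin condition fails at a place $v_0$, say $\G_1$ is $K_{v_0}$-split while $\G_2$ is not. By choosing $T_1$ so as to be $K_{v_0}$-split (such $K$-tori exist in abundance in a $K_{v_0}$-split group), one deduces that $T_2$ is $K_{v_0}$-split as well; since a connected reductive group containing a split maximal torus is split, $\G_2$ is forced to be $K_{v_0}$-split, a contradiction. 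A symmetric argument starting from a maximal $K$-torus that is $K_{v_0}$-anisotropic handles the anisotropic half of the twin condition.

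The principal obstacle is controlling the local picture at places where the group is isotropic but not split: over $p$-adic and real completions, both $\tB_\ell$ and $\tC_\ell$ admit several Tits indices, and one must verify that matching of Galois modules on the torus side rules out all intermediate cases. This requires a detailed analysis of which maximal $K_v$-tori can sit inside each Tits index, leveraging the description of such tori in terms of orthogonal data for type $\tB_\ell$ and quaternionic-Hermitian data for type $\tC_\ell$. The assumption $\ell \ge 3$ excludes the exceptional isogeny $\tB_2 \cong \tC_2$, which would otherwise trivialize the distinction between the two types and demand separate treatment.
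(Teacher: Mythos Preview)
Your framework for the converse is essentially the paper's---pass from weakly commensurable elements to isogenous generic tori via the Isogeny Theorem of \cite{PrRap:weakly}, then seek a local obstruction---but the argument has a genuine gap precisely where you flag the ``principal obstacle''. Your split argument (choose $T_1$ split at $v_0$, force $T_2$ and hence $\G_2$ split) is fine, and a corrected version of your ``anisotropic half'' (start from an \emph{isotropic} generic $T_i$ in the isotropic group to contradict anisotropy of the other; as written, an anisotropic $T_1$ tells you nothing about $\G_2$) yields $\G_1$ isotropic at $v_0 \Leftrightarrow \G_2$ isotropic at $v_0$. But this leaves untouched the case where both $\G_1$ and $\G_2$ are isotropic and non-split at a real $v_0$, and that case is the whole difficulty. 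The paper resolves it via an explicit classification of maximal $\R$-tori (Lemmas~\ref{B.R} and~\ref{C.R}): the adjoint group $\SO(r,2\ell+1-r)$, when isotropic, always contains a maximal torus with a $\GL_1$ factor (parameters $\alpha=1$, $\beta=\ell-1$, $\gamma=0$ in \eqref{E:r-tori}), whereas every maximal $\R$-torus of a non-split $\Sp(r,\ell-r)$ has $\alpha=0$. This is Corollary~\ref{C:3.4}, and combined with Proposition~\ref{P:isom}---which upgrades a $K$-isogeny between generic tori to a $K$-isomorphism $\nat{T}_1 \simeq \nat{T}_2$---it supplies the contradiction. Incidentally, the paper does not run your torus-matching argument at finite places at all: there it invokes \cite[Th.~6.2]{PrRap:weakly} to get $\rk_{K_v}\G_1 = \rk_{K_v}\G_2$ directly, and since $\rk_{K_v}\G_1 \in \{\ell-1,\ell\}$ while $\rk_{K_v}\G_2 \in \{[\ell/2],\ell\}$ over a nonarchimedean field, equality forces both split once $\ell \ge 3$ (Lemma~\ref{nonarch}).

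For the ``if'' direction, producing \emph{one} matched pair $(T_1,T_2)$ is not enough: weak commensurability requires that \emph{every} semisimple $\gamma_1 \in \Gamma_1$ of infinite order be matched to some $\gamma_2 \in \Gamma_2$. The paper first proves (Theorem~\ref{BC}, via the local-global principle for embeddings of \'etale algebras with involution in \S\ref{LG}) that twins have the same isogeny classes of \emph{all} maximal $K$-tori; then for any $\gamma_1 \in T_1(K)\cap\Gamma_1$ the $K$-isogeny $\varphi\colon T_1 \to T_2$ carries $T_1(K)\cap\Gamma_1$ to an $S$-arithmetic subgroup of $T_2(K)$, so some $\varphi(\gamma_1)^n$ lies in $\Gamma_2$ and is visibly weakly commensurable to $\gamma_1$.
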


If Zariski-dense $(\G_1, K_1, S_1)$- and $(\G_2, K_2,
S_2)$-subgroups are weakly commensurable then necessarily $K_1 =
K_2$ and $S_1 = S_2$ by \cite[Th.~3]{PrRap:weakly}, so
Theorem \ref{T:1} in fact treats the most general situation.
Furthermore, for $\ell = 2$ we have $\tB_2 = \tC_2$, so $G_1$
and $G_2$ have the same type; then $\Gamma_1$ and $\Gamma_2$ are weakly
commensurable if and only if $\G_1 \simeq \G_2$ over $K$ by \cite[Th.~4]{PrRap:weakly}.  This shows that
the assumption $\ell \geqslant 3$ in Theorem \ref{T:1} is
essential---the excluded case of $\ell = 2$ is treated in Theorem \ref{rank2} below.

\medskip

Turning to the second problem, of characterizing almost simple
algebraic groups having the same (isomorphic classes of) maximal
tori, we would like to point out that, as we will see shortly, one
gets more satisfactory results if instead of talking about {\it
isomorphic} groups one talks about {\it isogenous} ones. We recall
that algebraic $K$-groups $H_1$ and $H_2$  are called isogenous
if there exists a $K$-group $H$ with  central $K$-isogenies
$\pi_i \colon H \to H_i$, $i = 1, 2$. For  semi-simple $K$-groups
$G_1$ and $G_2$, this amounts to the fact that the universal covers
$\Gt_1$ and $\Gt_2$ are $K$-isomorphic, and for 
$K$-tori $T_1$ and $T_2$ this simply means that there exists a
$K$-isogeny $T_1 \to T_2$. Furthermore, we say that two semi-simple
$K$-groups $G_1$ and $G_2$ have the same isogeny classes of maximal
$K$-tori if every maximal $K$-torus $T_1$ of $G_1$ is $K$-isogenous
to some maximal $K$-torus $T_2$ of $G_2$, and vice versa. Unsurprisingly,
$K$-isogenous groups have the same isogeny classes of maximal tori. Using the
results from \cite{PrRap:weakly} and \cite{G:outer}, we prove the following partial
converse for almost simple groups over number fields.

\begin{prop} \label{tori.old}
Let $G_1$ and $G_2$ be  absolutely almost simple algebraic
groups over a number field $K$. Assume that $G_1$ and $G_2$ have the
same isogeny classes of maximal $K$-tori. Then at least one of the
following holds:

\begin{enumerate}
\item \label{P1} $G_1$ and $G_2$ are $K$-isogenous;
\item \label{P2} $G_1$ and $G_2$ are of the same
Killing-Cartan type, which is one of the following: $\tA_{\ell}$
$(\ell > 1)$, $\tD_{2\ell + 1}$ $(\ell > 1)$, or $\tE_6$;
\item \label{P3} one of the groups is of type $\tB_{\ell}$ and
the other of type $\tC_{\ell}$ for some $\ell \geqslant 3$.
\end{enumerate}
\end{prop}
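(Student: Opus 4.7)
The plan is to translate the torus-theoretic hypothesis into weak commensurability of $S$-arithmetic subgroups, and then invoke the classification theorems of \cite{PrRap:weakly}, \cite{PrRap:Deven}, and \cite{G:outer}.

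First I would show that the hypothesis implies weak commensurability of any Zariski-dense $(\G_i, K, S)$-arithmetic subgroups $\Gamma_i \subset G_i(K)$ (for some sufficiently large finite set $S$ of places of $K$). Given a regular semisimple $\gamma_1 \in \Gamma_1$, it is contained in a unique maximal $K$-torus $T_1$ of $G_1$. By hypothesis there exist a maximal $K$-torus $T_2 \subset G_2$ and a $K$-isogeny $\varphi \colon T_1 \to T_2$. The image $\varphi(\gamma_1) \in T_2(K)$ is $S$-integral (after possibly enlarging $S$), so a suitable power lies in $\Gamma_2 \cap T_2(K)$. The pullback $\varphi^*$ identifies $X^*(T_2) \otimes \Q$ with $X^*(T_1) \otimes \Q$, so for some root $\alpha_1$ of $G_1$ relative to $T_1$ and some root $\alpha_2$ of $G_2$ relative to $T_2$, appropriate integer powers of $\alpha_1(\gamma_1)$ and $\alpha_2(\varphi(\gamma_1))$ coincide and differ from $1$. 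This is precisely the definition of weak commensurability for the corresponding powers of $\gamma_1$ and of its partner in $\Gamma_2$.

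Given weak commensurability of $\Gamma_1$ and $\Gamma_2$, Theorem~1 of \cite{PrRap:weakly} forces $G_1$ and $G_2$ either to share a Killing-Cartan type or to be paired as types $\tB_{\ell}$ and $\tC_{\ell}$; the latter (with $\ell \geqslant 3$, since $\tB_2 = \tC_2$ would put us in the same-type case) is conclusion \eqref{P3}. If instead $G_1$ and $G_2$ share a type $X$ which is \emph{not} one of $\tA_\ell$ $(\ell > 1)$, $\tD_{2\ell+1}$ $(\ell > 1)$, $\tE_6$, then the sharpened conclusions of \cite[Thm.~4]{PrRap:weakly} (for types $\tG_2, \tF_4, \tE_7, \tE_8, \tB_\ell, \tC_\ell$, and inner forms), of \cite{PrRap:Deven} (for $\tD_{2\ell}$), and of \cite{G:outer} (for the remaining outer forms) show that weak commensurability in type $X$ forces $\G_1 \simeq \G_2$ over $K$, so $G_1$ and $G_2$ are $K$-isogenous (conclusion \eqref{P1}). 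Otherwise $X$ is in the list of \eqref{P2} and there is nothing more to show.

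The principal obstacle is the first step, turning the torus-theoretic hypothesis into weak commensurability of arithmetic subgroups. The subtlety is arithmetic: one must ensure that $\varphi(\gamma_1)$, while a priori only an element of $T_2(K)$, can be replaced by a power lying in $\Gamma_2$, and that the induced identification of rational character lattices actually produces a pair of \emph{roots} (not just characters) taking a common nontrivial value. Once this translation is in place, the remaining steps are essentially bookkeeping against the cited classification results.
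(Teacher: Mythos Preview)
Your approach is valid but takes a different route than the paper. The paper does \emph{not} pass through weak commensurability here: it reduces to the adjoint case, invokes \cite[Theorem~7.5]{PrRap:weakly} directly (a result about groups sharing isogeny classes of maximal tori, not about weakly commensurable lattices) to force the same Killing-Cartan type and the same quasi-split inner form, and then proves the remaining implication by a short self-contained Lemma (Lemma~\ref{odd.similar}). For types other than $\tD_{2\ell}$ this lemma is just the observation that equal local ranks at every place pin down the adjoint group via the Hasse principle; for $\tD_{2\ell}$ it uses a generic torus together with \cite[Lemma~4.3, Remark~4.4]{PrRap:weakly} and \cite[Theorem~20]{G:outer}. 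Your detour through weak commensurability is legitimate---indeed the paper itself notes in the proof of Theorem~\ref{T:1} that ``same isogeny classes of maximal $K$-tori'' \emph{automatically} implies weak commensurability of $S$-arithmetic subgroups (cf.\ \cite[Example~6.5]{PrRap:weakly})---and it has the advantage of reducing everything to black-box citations. The paper's direct argument, on the other hand, is shorter and does not require setting up $S$-arithmetic subgroups at all.

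A few small points in your write-up deserve tightening. First, ``$\varphi(\gamma_1)$ is $S$-integral after possibly enlarging $S$'' is the wrong fix: $S$ must stay fixed, and the correct argument is that $\varphi(T_1(K)\cap\Gamma_1)$ is $S$-arithmetic in $T_2$, hence commensurable with $\Gamma_2\cap T_2(K)$, so some \emph{power} $\varphi(\gamma_1)^n$ lies in $\Gamma_2$ (with $n$ depending on $\gamma_1$). Second, weak commensurability only asks for \emph{characters} $\chi_i\in X(T_i)$ with $\chi_1(\gamma_1)=\chi_2(\gamma_2)\neq 1$; there is no need to produce roots, and the character statement follows immediately from $\varphi^*$ being injective on $X(T_2)$. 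Third, you should note that $S$ can be chosen so that both $\Gamma_i$ are Zariski-dense (e.g., include a finite place where both groups are isotropic), since Zariski-density is a standing hypothesis in the cited classification theorems.
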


We will  prove the proposition in \S\ref{S:Compl}. As Theorem \ref{rank2}
below shows, it is possible for two isogenous, but not isomorphic,
groups to have the same isomorphism classes of maximal $K$-tori, so
the conclusion in \eqref{P1} cannot be strengthened even if we
assume that $G_1$ and $G_2$ have the same maximal tori. On the other
hand, for each of the types listed in \eqref{P2} one can construct
non-isomorphic simply connected, hence non-isogenous, groups of this
type having the same tori \cite[\S9]{PrRap:weakly}, so these types
are genuine exceptions. In this paper, we will sharpen case
\eqref{P3}. Specifically, we prove the following in \S\ref{S:BC}.

\begin{thm} \label{BC}
Let $G_1$ and $G_2$ be absolutely almost simple algebraic groups
over a number field $K$ of types $\tB_{\ell}$ and $\tC_{\ell}$
respectively for some $\ell \ge 3$.
\begin{enumerate}
\item The groups $G_1$ and $G_2$ have the same \underline{isogeny} classes of
maximal $K$-tori if and only if they are twins.
\item The groups $G_1$ and $G_2$ have the same \underline{isomorphism} classes of maximal $K$-tori if and
only if they are twins, $G_1$ is adjoint, and $G_2$ is simply connected.
\end{enumerate}
\end{thm}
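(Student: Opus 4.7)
I reduce both parts to an analysis of the $\Q[W]$-module structure on $X^*(T) \otimes \Q$ for maximal $K$-tori $T$, combined with local analysis at each place of $K$. Since the root systems $\tB_{\ell}$ and $\tC_{\ell}$ share the same Weyl group $W \cong (\Z/2\Z)^{\ell} \rtimes S_{\ell}$, the Galois data attached to maximal $K$-tori of types $\tB_{\ell}$ and $\tC_{\ell}$ live in a common parameter space; the question is which parameters actually arise as tori inside a given inner form $G_i$, and when the corresponding tori are $K$-isogenous (resp. $K$-isomorphic).

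\textbf{Part (1).} For the forward direction, assume that $G_1$ and $G_2$ have matching isogeny classes of maximal $K$-tori, and fix a place $v$ of $K$. If $G_1$ is $K_v$-split, a standard approximation argument produces a maximal $K$-torus $T_1 \subset G_1$ that is $K_v$-split. Any maximal $K$-torus $T_2 \subset G_2$ that is $K$-isogenous to $T_1$ is also $K_v$-split, since isogenous tori have the same minimal splitting field; because groups of type $\tB_{\ell}$ or $\tC_{\ell}$ admit no non-split quasi-split inner forms, the presence of a $K_v$-split maximal $K$-torus in $G_2$ forces $G_2$ to be $K_v$-split. The case $G_1$ being $K_v$-anisotropic is handled symmetrically, using that every maximal $K$-torus of a $K_v$-anisotropic group is itself $K_v$-anisotropic. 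Conversely, if $G_1$ and $G_2$ are twins and $T_1 \subset G_1$ is a maximal $K$-torus with associated $W$-valued Galois cocycle, I invoke an embedding criterion for maximal tori into an inner form of an absolutely almost simple group: the obstruction to realizing the corresponding $K$-torus as a maximal torus of $G_2$ is local and depends only on the $K_v$-isomorphism class of $G_2$, which by the twin condition agrees with that of $G_1$ at every $v$. Hence $T_1$ can be matched by an isogenous $T_2 \subset G_2$, and the symmetric construction yields (1).

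\textbf{Part (2).} For the isomorphism statement one must track not only the $\Q$-span but the actual character lattice $X^*(T)$. The critical observation is that, as $W$-modules with $W$ acting by signed permutations on $\Z^{\ell}$, one has $Q(\tB_{\ell}) \cong P(\tC_{\ell}) \cong \Z^{\ell}$, whereas $P(\tB_{\ell})$ and $Q(\tC_{\ell})$ are not $W$-isomorphic to this standard lattice nor to one another. Since the maximal $K$-tori of a simply connected (respectively adjoint) group have character lattice isomorphic as a $W$-module to the weight (respectively root) lattice of the root system, the only way the isomorphism classes of maximal $K$-tori of $G_1$ (type $\tB_{\ell}$) can coincide with those of $G_2$ (type $\tC_{\ell}$) is when $G_1$ is adjoint and $G_2$ is simply connected. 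Combined with part (1), and an upgrade of the cocycle-matching argument to the level of integral lattices in this particular pairing, this yields (2).

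\textbf{Main obstacle.} The principal technical step is the reverse direction of (1): given the abstract $W$-valued Galois cocycle arising from a maximal $K$-torus of $G_1$, one must produce a maximal $K$-torus of $G_2$ realizing it. This rests on a Hasse principle for embeddings of tori into absolutely almost simple groups over a number field, in the spirit of the techniques of \cite{PrRap:weakly}, together with a careful verification that the local obstructions coincide for $G_1$ and $G_2$ under the twin hypothesis.
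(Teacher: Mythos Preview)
Your forward direction of part (1) has a genuine gap. You argue that if $G_1$ is $K_v$-split then $G_2$ is $K_v$-split (correct), and claim the anisotropic case is ``handled symmetrically''. But the symmetric statement would be: if $G_1$ is $K_v$-anisotropic then every maximal $K_v$-torus of $G_1$ is anisotropic, hence so is any isogenous torus in $G_2$. That only tells you $G_2$ contains a $K_v$-anisotropic maximal torus, which is true of every reductive group over a local field and says nothing. Even granting both implications (split $\Leftrightarrow$ split and anisotropic $\Leftrightarrow$ anisotropic), you have not proved the twin condition: you must exclude the possibility that both groups are \emph{neither} split nor anisotropic at some real place. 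Such intermediate forms exist, and worse, they can share maximal tori. The paper's Example~\ref{R.eg} exhibits $\Spin(1,6)$ and $\PSp(1,2)$ over $\R$---both of $\R$-rank $1$---with identical isomorphism classes of maximal $\R$-tori. So a purely local argument of the kind you sketch cannot work.

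The paper closes this gap with two ingredients you are missing. First, Proposition~\ref{P:isom}: if $T_i$ is a \emph{generic} maximal $K$-torus of $G_i$ and is $K$-isogenous to some maximal $K$-torus of $G_{3-i}$, then in fact $\nat{T}_i \simeq \nat{T}_{3-i}$ over $K$, where $\nat{G}_1$ is the adjoint form and $\nat{G}_2$ the simply connected form. This uses $\ell \ge 3$ and the explicit structure of the $\tB_\ell$/$\tC_\ell$ root systems. Second, Corollary~\ref{C:3.4}: over $\R$, the adjoint $\tB_\ell$ group and the simply connected $\tC_\ell$ group have the same maximal $\R$-tori only when both are split or both are anisotropic; the point is that an isotropic non-split $\SO(r,n-r)$ contains a maximal torus with a $\GL_1$ factor (Lemma~\ref{B.R}), whereas no $\Sp(r,\ell-r)$ does (Lemma~\ref{C.R}). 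Combining these via Corollary~\ref{II.cor} rules out the intermediate case.

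Your converse direction of (1) is in the right spirit but too vague where it matters. You say the obstruction to embedding a torus is ``local and depends only on the $K_v$-isomorphism class of $G_2$, which by the twin condition agrees with that of $G_1$''---but $G_1$ and $G_2$ are never $K_v$-isomorphic, having different types, so this sentence needs unpacking. More seriously, the local-global principle for embedding \'etale algebras with orthogonal involution (needed for the $\tB_\ell$ direction) can fail in general; the paper checks the relevant hypothesis (Lemma~\ref{LG.lem}\eqref{LG.real}) using the existence of an anisotropic real place, and handles the fully split case separately. Your lattice argument for part (2) is essentially what the paper does in the last paragraph of the proof.
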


We note that one can give examples of groups $G_1$ and $G_2$ of
types $\tB_{\ell}$ and $\tC_{\ell}$ respectively over the field $\R$
of real numbers, that are neither split nor anisotropic but
nevertheless have the same isomorphism classes of maximal $\R$-tori
(see Example \ref{R.eg}). This shows Theorem \ref{BC}, unlike many
statements about algebraic groups over number fields, is \emph{not} a
global version of the corresponding theorem over local fields. What
is crucial for the proof of Theorem \ref{BC} (and also Theorem
\ref{T:1}) is that if the real groups $G_1$ and $G_2$ are neither
split nor anisotropic with $G_1$ adjoint and $G_2$ simply connected
then they cannot have the same maximal $\mathbb{R}$-tori (see
Corollary \ref{C:3.4}). 

\subsection*{The special case $\tB_2 = \tC_2$}
Theorem \ref{BC} completely settles the question of when the groups
of types $\tB_{\ell}$ and $\tC_{\ell}$ have isogenous tori for $\ell
\ge 3$. The case where $\ell = 2$ is special because the root
systems $\tB_2$ and $\tC_2$ are the same.

Let $G_1$ and $G_2$ be groups of type $\tB_2 = \tC_2$.
They have the same isogeny  classes of maximal tori if and only
if they are isogenous by Lemma \ref{odd.similar} below or \cite[Th.~7.5(2)]{PrRap:weakly}.
In particular, when $G_1$ and $G_2$ are both adjoint or both simply connected, they have
the same isogeny classes of maximal tori if and only if $G_1 \simeq G_2$ if and only if they
have the same maximal tori.  It remains only to give a condition for $G_1$ and $G_2$ to have
the same maximal tori when one is adjoint and the other is simply connected, which we now do.
\begin{thm} \label{rank2}
Let $q_1, q_2$ be $5$-dimensional quadratic forms over a number
field $K$.  The groups $G_1 = \SO(q_1)$ and  $G_2 = \Spin(q_2)$
have the same isomorphism classes of maximal $K$-tori if and only if
\begin{enumerate}
\item \label{rank2.sim} $q_1$ is similar to $q_2$; and
\item \label{rank2.di} $q_1$ and $q_2$ are either both  split or both anisotropic at every completion of $K$.
\end{enumerate}
\end{thm}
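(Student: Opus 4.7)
The plan is to handle both directions by a local-to-global argument.

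\emph{Necessity.} If $\SO(q_1)$ and $\Spin(q_2)$ have the same isomorphism classes of maximal $K$-tori, then they certainly have the same isogeny classes, so by the isogeny criterion in the type $\tB_2 = \tC_2$ case (Lemma~\ref{odd.similar} or \cite[Th.~7.5(2)]{PrRap:weakly}) they are $K$-isogenous. Since $G_1$ is adjoint and $G_2$ simply connected, their universal covers agree: $\Spin(q_1) \cong \Spin(q_2)$, which is equivalent to $q_1 \sim q_2$. After replacing $q_2$ by a suitable scalar multiple I may take $q_1 = q_2 = q$, and the problem reduces to comparing maximal $K$-tori of $\SO(q)$ and $\Spin(q)$. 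To obtain (2), suppose $q$ has Witt index $1$ at some completion $v_0$, and write $q_{v_0} = \hyp \perp \langle e \rangle \perp \langle a, b \rangle$ with $\langle a, b \rangle$ anisotropic; this exhibits a maximal $K_{v_0}$-torus $T_0 \cong \mathbb{G}_m \times R^{(1)}_{L/K_{v_0}}(\mathbb{G}_m)$ in $\SO(q)_{K_{v_0}}$, where $L = K_{v_0}(\sqrt{-ab})$ is a nontrivial quadratic extension. The key local claim---extending Corollary~\ref{C:3.4} from $\mathbb{R}$ to any completion where $q_{v_0}$ has Witt index $1$---is that $T_0$ is not isomorphic to any maximal $K_{v_0}$-torus of $\Spin(q)_{K_{v_0}}$. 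This is verified by a character-lattice comparison: every maximal $K_{v_0}$-torus of $\Spin(q)_{K_{v_0}}$ has character lattice of the form $P = Q + \mathbb{Z}\,\tfrac{1}{2}(\epsilon_1 + \epsilon_2)$ where $Q$ is the character lattice of the corresponding maximal torus of $\SO(q)_{K_{v_0}}$, and case-by-case analysis shows that no such $P$ is isomorphic, as a Galois module, to $X^*(T_0) = \mathbb{Z}_+ \oplus \mathbb{Z}_{-,L}$. Harder's density theorem for maximal $K$-tori (cf.\ \cite[Appendix~A]{PrRap:weakly}) then lifts $T_0$ to a maximal $K$-torus $T \subset \SO(q)$ with $T \otimes K_{v_0} \simeq T_0$; any maximal $K$-torus $T^* \subset \Spin(q)$ isomorphic to $T$ would localize at $v_0$ to a maximal $K_{v_0}$-torus of $\Spin(q)_{K_{v_0}}$ isomorphic to $T_0$, contradicting the local claim.

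\emph{Sufficiency.} Assume (1) and (2), so that $q_1 = q_2 = q$ and $q$ is split or anisotropic at every completion. At each place $v$, condition (2) forces $\SO(q)_{K_v}$ and $\Spin(q)_{K_v}$ to be either both split of the same type or both compact (the latter at anisotropic archimedean places), and one checks directly that in either local situation they share the same isomorphism classes of maximal $K_v$-tori---for instance, over $\mathbb{R}$ in the split case there are four shared classes $\mathbb{G}_m^2, U_1^2, \mathbb{G}_m \times U_1$, and $R_{\mathbb{C}/\mathbb{R}}(\mathbb{G}_m)$, matched via an ``exchange'' of short and long reflections in the Weyl group of $\tB_2 = \tC_2$. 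To assemble these local matchings into a global one, given a maximal $K$-torus $T \subset \SO(q)$ I either take $T^* \subset \Spin(q)$ to be the preimage $\tilde T$ under the isogeny (which is isomorphic to $T$ in the favorable ``Case~A'' situation $L_1 = L_2$, by a direct character-lattice calculation), or else construct an alternative maximal $K$-torus $T^{**} \subset \SO(q)$---typically of ``quartic type'' coming from a quartic étale $K$-algebra with involution, whose existence is ensured place-by-place by condition (2)---whose preimage in $\Spin(q)$ is isomorphic to $T$. The reverse direction, producing a maximal $K$-torus of $\SO(q)$ isomorphic to a given maximal $K$-torus of $\Spin(q)$, is handled symmetrically.

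The main obstacle is the sufficiency step: the direct bijection via the isogeny $\Spin(q) \to \SO(q)$ does not preserve isomorphism type of maximal tori in general, so one has to organize a case analysis pairing ``product-type'' maximal tori on one side with ``quartic-type'' maximal tori on the other, using the local input from (2) to guarantee that the required $K$-torus $T^{**}$ exists globally.
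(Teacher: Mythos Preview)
Your necessity argument is essentially the same as the paper's. You obtain $q_1\sim q_2$ via Lemma~\ref{odd.similar}, reduce to $q_1=q_2=q$, and then at a place $v_0$ where $q$ has Witt index~$1$ exhibit the torus $T_0=\mathbb{G}_m\times R^{(1)}_{L/K_{v_0}}(\mathbb{G}_m)$ inside $\SO(q)$ that cannot sit inside $\Spin(q)$. The paper carries out the local obstruction via the \'etale-algebra description: identifying $\Spin(q)_{K_{v_0}}$ with $\mathrm{SU}(M_2(D),\tau)$ for the quaternion division algebra $D$, any maximal $K_{v_0}$-torus splitting over $L$ arises from $(L,\bar{\ })^2$ or $(L\times L,\mathrm{switch})$, hence is isomorphic to $R^{(1)}_{L/K_{v_0}}(\mathbb{G}_m)^2$ or $R_{L/K_{v_0}}(\mathbb{G}_m)$, neither of which is $T_0$. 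Your character-lattice ``case-by-case analysis'' would reach the same conclusion, but you should actually write it down; as stated it is an assertion, not an argument.

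The genuine gap is in sufficiency. You are reinventing machinery that the paper has already built: condition~(2) says precisely that $G_1$ and $G_2$ are twins in the sense of Definition~\ref{twin.def}, and Corollary~\ref{C:BC-twins}\eqref{twin.same} together with Remark~\ref{R:L=2} (which confirms that Proposition~\ref{P:same-subalg} and Corollary~\ref{C:BC-twins} are valid for $\ell=2$) give immediately that $\SO(q)$ and $\Spin(q)$ have the same isomorphism classes of maximal $K$-tori. Your proposed alternative---matching ``product-type'' tori on one side with ``quartic-type'' tori on the other via an ad hoc case analysis---is not carried out and it is not clear it can be: you never specify how to produce the global $T^{**}$, nor why the local conditions guarantee its existence. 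The paper's route through the \'etale-algebra bijection of Proposition~\ref{P:same-subalg} (backed by the local-global principles of Lemmas~\ref{LG.lem} and~\ref{L:sympl}) is exactly what makes this step work, and you should simply invoke it.
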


\subsection*{Notation} For a number field $K$, we let $V^K$
denote the set of all places, and let $V^K_{\infty}$ (resp.,
$V^K_f$) denote the subset of archimedean (resp., nonarchimedean)
places. Given a reductive algebraic group $G$ defined over a field
$K$, for any field extension $L/K$ we let $\rk_L G$ denote
the $L$-rank of $G$, i.e., the dimension of a maximal $L$-split
torus.

We write $r\qform{a}$ for the symmetric bilinear form $(x, y) \mapsto a \sum_{i=1}^r x_i y_i$ on $K^r$, and adopt similar notation for quadratic forms and hermitian forms.

In \S\ref{S:BC}, we systematically use the following: for $G_1$ and $G_2$  absolutely
almost simple groups of types $\tB_{\ell}$ and $\tC_{\ell}$
 respectively, we put $\nat{G}_1$ for the
adjoint group of $G_1$ (``$\SO$''), and  $\nat{G}_2$ for the simply connected cover
of $G_2$ (``$\Sp$'').

\section{Steinberg's theorem for algebras with involution}\label{Stbg.sec}

Our proofs of Theorems \ref{T:1} and \ref{BC} rely on the
well-known fact that groups of classical types can be realized as
special unitary groups associated with simple algebras with
involutions, so their maximal tori correspond to certain
commutative \'etale subalgebras invariant under the involution. This
description  enables us to apply the local-global principles for the
existence of an embedding of an \'etale algebra with an involutory
automorphism into a simple algebra with an involution
\cite{PrRap:Deven}. To ensure the existence of local embeddings, we
will use an analogue for algebras with involution of the theorem, due
to Steinberg \cite{St:reg}, asserting that if $G_0$ is a quasi-split
simply connected almost simple algebraic group over a field $K$ and
$G$ is an inner form of $G_0$ over $K$ then any maximal $K$-torus
$T$ of $G$ admits a $K$-defined embedding into $G_0$. The required
analogue roughly states that if $(A , \tau)$ is an algebra with
involution such that the corresponding group is quasi-split then any
commutative  \'etale algebra with involution $(E , \sigma)$ that can
potentially embed in $(A , \tau)$ does embed. It can be deduced
from the original Steinberg's theorem along the lines of
\cite[Prop.~3.2(b)]{Gille:tori}, but in fact one can give a simple
direct argument. To our knowledge, this has not been recorded in
the literature.  Further, the argument for type $\tB_n$ (in Proposition \ref{B.split}) extends with minor modifications to other types.
So, despite the fact that we will only
use this statement for algebras corresponding to groups of type
$\tB_n$ and $\tC_n$, we will give the argument for all classical
types. We begin by briefly recalling the types of algebras
with involution arising in this context, indicating in each case the
\'etale subalgebras that give maximal tori.

\subsection*{Description of tori in terms of \'etale algebras}
Let $A$ be a central simple algebra of dimension $n^2$ over a field
$L$ of characteristic $\neq 2$,  and let $\tau$ be an involution of
$A$. Set $K = L^{\tau}$. We recall that $\tau$ is said to be of the
\emph{first} (resp., \emph{second}) kind if the restriction $\tau
\vert_L$ is trivial (resp., nontrivial). Furthermore, if $\tau$ is
an involution of the first kind, then it is either \emph{symplectic}
(i.e., $\dim_K A^{\tau} = n(n-1)/2$) or \emph{orthogonal} (i.e.,
$\dim_K A^{\tau} = n(n+1)/2$).

We also recall the well-known
correspondence between involutions on $A = M_n(L)$ and nondegenerate
hermitian or skew-hermitian forms on $L^n$, cf.~\cite{KMRT}: given
such a form $f$, there exists a unique involution $\tau_f$ such that
$$
f(ax , y) = f(x , \tau_f(a)y)
$$
for all $x , y \in L^n$ and all $a \in A$; then the pair $(M_n(L) ,
\tau_f)$ will be denoted by $A_f$. Moreover, $f$ is symmetric
(resp., skew-symmetric) if and only if $\tau_f$ is orthogonal
(resp., symplectic). Conversely, for any involution $\tau$ there
exists a form $f$ on $L^{n}$ of appropriate type such that $\tau =
\tau_f$, and any two such forms are proportional. (Note that for
involutions of the second kind one can pick the corresponding form to be
either hermitian or skew-hermitian as desired.)

\subsubsection*{Type $^{2}\!\tA_{\ell}$} Let $(A , \tau)$ be a central
simple $L$-algebra of dimension $n^2$ with an involution $\tau$ of
the second kind. Then $G = \mathrm{SU}(A , \tau)$ is an
absolutely almost simple simply connected $K$-group of type
$^{2}\!\tA_{\ell}$ with $\ell = n - 1$, and conversely any such
group corresponds to an algebra with involution $(A , \tau)$ of this
kind. Any $\tau$-invariant \'etale commutative subalgebra $E \subset
A$ gives  a maximal $K$-torus
\[
T = \mathrm{R}_{E/K}(\GL_1) \cap G = \mathrm{SU}(E , \tau \vert_E)
\]
of $G$, and all maximal $K$-tori are obtained this way (see, for example, \cite[Prop.~2.3]{PrRap:Deven}). The
group $G$ is quasi-split if and only if $A = M_n(L)$ and $\tau =
\tau_h$ where $h$ is a nondegenerate hermitian form on $L^n$ of Witt
index $[n/2]$.

\subsubsection*{Type $\tB_{\ell}$ $(\ell \geqslant 2)$}  Let $A =
M_n(K)$ with $n = 2\ell + 1$, and let $\tau$ be an orthogonal
involution of $A$. Then $\tau = \tau_f$ for some nondegenerate
symmetric bilinear form $f$ on $K^n$, and
$$
G = \mathrm{SU}(A , \tau) = \mathrm{SO}(f)
$$
is an adjoint group of type $\tB_{\ell}$, and every such group is
obtained this way. Furthermore, maximal $K$-tori $T$ of $G$
bijectively correspond to maximal commutative \'etale
$\tau$-invariant subalgebras $E$ of $A$ (of dimension $n$) such that
$\dim_K E^{\tau} = \ell + 1$ under the correspondence given by
$$
T = \mathrm{R}_{E/K}(\GL_1) \cap G  = \mathrm{SU}(E , \tau
\vert_E).
$$
Furthermore, any
such algebra admits a decomposition
\begin{equation}\label{E:decomp}
(E , \tau) = (E' , \tau') \times (K , \mathrm{id}_K)
\end{equation}
where $E' \subset E$ is a $\tau$-invariant subalgebra of dimension
$2\ell$. Finally, the group $G$ is quasi-split (in fact, split) if
and only if $f$ has Witt index $\ell$.

\subsubsection*{Type $\tC_{\ell}$ $(\ell \geqslant 2)$} Let $A$ be a
central simple $K$-algebra of dimension $n^2$ with $n = 2\ell$, and
let $\tau$ be a symplectic involution of $A$. Then $G =
\mathrm{SU}(A , \tau)$ is an absolutely almost simple simply
connected group of type $\tC_{\ell}$, and all such groups are
obtained this way. Maximal $K$-tori of $G$ correspond to maximal
commutative \'etale $\tau$-invariant subalgebras $E \subset A$ (of
dimension $n$) such that $\dim_K E^{\tau} = \ell$ in the fashion
described above. The group $G$ is quasi-split (in fact, split) if
and only if $A = M_n(K)$. Then $\tau = \tau_f$ where $f$ is a
nondegenerate skew-symmetric form on $K^n$; there is only one
equivalence class of such forms, so in this case $G \simeq
\mathrm{Sp}_n$.

\subsubsection*{Type $^{1,2}\tD_{\ell}$ $(\ell \geqslant 4)$} 
Let $A$
be a central simple $K$-algebra $K$-algebra of dimension $n^2$ where
$n = 2\ell$, and let $\tau$ be an orthogonal involution of $A$. Then
$G = \mathrm{SU}(A , \tau)$ is an almost absolutely simple $K$-group
of type $^{1,2}\tD_{\ell}$ which is neither simply connected nor
adjoint, and any $K$-group of this type is $K$-isogenous to such a
group. Maximal $K$-tori of $G$ correspond to maximal commutative
\'etale $\tau$-invariant subalgebras $E \subset A$ (of dimension
$n^2$) such that $\dim_K E^{\tau} = \ell$. The group $G$ is
quasi-split if and only if $A = M_n(K)$ and $\tau = \tau_f$ where
$f$ is a symmetric bilinear form on $K^n$ of Witt index $\ell - 1$
or $\ell$.

\subsubsection*{Summary}
Thus, if $A$ is a central simple $L$-algebra of dimension $n^2$ (and
$L = K$ for all types except $^{2}\!\tA_{\ell}$) then maximal
$K$-tori of the algebraic $K$-group $G = \mathrm{SU}(A , \tau)$
correspond in the manner described above to maximal abelian \'etale
$\tau$-invariant subalgebras $E \subset A$ with $\dim_L E = n$ such
that for $\sigma = \tau \vert_E$ we have
\begin{equation}\label{E:dimen}
\dim_K E^{\sigma} = \begin{cases} n & \text{if $\sigma \vert_L \neq \mathrm{id}_L$} \\
 \left[ \frac{n+1}{2} \right] & \text{if $\sigma \vert_L = \mathrm{id}_L$.}
\end{cases}
\end{equation}
(Note that the condition is automatically satisfied if $\sigma \vert_L \neq \mathrm{id}_L$.) 

Now, let $(E , \sigma)$ be an $n$-dimensional commutative \'etale
$L$-algebra with an involution satisfying (\ref{E:dimen}). Then the
question of whether the $K$-torus $T = \mathrm{SU}(E ,
\sigma)^{\circ}$ can be embedded into $G = \mathrm{SU}(A , \tau)$
where $A$ is a central simple $L$-algebra of dimension $n^2$ with an
involution $\tau$ such that $\sigma \vert_L = \tau \vert_L$
translates into the question if there is an embedding $(E , \sigma)
\hookrightarrow (A , \tau)$ of $L$-algebras with involution, which
we will now investigate in the cases of interest to us. We note that
if $G$ is quasi-split then in all cases $A = M_n(L)$. In this case,
the universal way to construct an embedding $(E , \sigma)
\hookrightarrow (M_n(L) , \tau)$ is described in the following
well-known statement.
\begin{prop}\label{P:exist}
Let $(E , \sigma)$ be an $n$-dimensional commutative \'etale
$L$-algebra with an involution $\sigma$.
\begin{enumerate}
\renewcommand{\theenumi}{\roman{enumi}}
\item For any $b \in E^{\times}$, the map
$\phi_b \colon E \times E \to K$ given by $$\phi_b(x , y) =
\mathrm{tr}_{E/L}(x \cdot b \cdot \sigma(y))$$ is a nondegenerate
sesqui-linear form, which is hermitian or skew-hermitian if and only
if $b$ is such.

\item Let $b \in E^{\times}$ be hermitian or
skew-hermitian, and let $\tau_{\phi_b}$ be the involution on $A :=
\End_L(E) \simeq M_n(L)$ corresponding to $\phi_b$; then the
regular representation of $E$ gives an embedding $$(E , \sigma)
\hookrightarrow (A , \tau_{\phi_b}) = A_{\phi_b}$$ of algebras with
involution.

\item Let $\tau$ be an involution on $A =
M_n(L)$, and let $f$ be a hermitian or skew-hermitian form on $L^n$
such that $\tau_f = \tau$. Then the following conditions are
equivalent:
\begin{enumerate}
\renewcommand{\theenumii}{\alph{enumii}}
\item \label{3a} There exists $b
\in E^{\times}$ of the same type as $f$ such that $\phi_b$ is
equivalent \newline to $f$.

\item \label{3b} There exists a form $h$ on $E \simeq L^n$ which is equivalent
to $f$ and satisfies
\begin{equation}\label{E:h}
h(ax , y) = h(x , \sigma(a)y) \ \ \text{for all}  \ \ a, x, y \in E.
\end{equation}
\item \label{3c} There exists an embedding $(E
, \sigma) \hookrightarrow (A , \tau)$ as $L$-algebras with
involutions.
\end{enumerate}
\end{enumerate}
\end{prop}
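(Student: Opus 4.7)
\smallskip

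\noindent\emph{Proof plan.} For part (i), sesqui-linearity of $\phi_b$ will follow at once from the $L$-linearity of $\tr_{E/L}$ and the $\sigma$-semilinearity of $\sigma$, while nondegeneracy reduces, via the invertibility of $b$ and the bijectivity of $\sigma$, to the nondegeneracy of the standard trace form $(u,v) \mapsto \tr_{E/L}(uv)$ on the \'etale algebra $E$. To detect the hermitian or skew-hermitian character, I will use the intrinsic identity $\sigma(\tr_{E/L}(c)) = \tr_{E/L}(\sigma(c))$ (which holds because $\sigma$ permutes the $L$-algebra homomorphisms $E \to \overline{L}$ compatibly with $\sigma\vert_L$, visibly after base change to $\overline{K}$) to compute
\[
\sigma(\phi_b(x,y)) = \tr_{E/L}\bigl(\sigma(x)\sigma(b)y\bigr) = \tr_{E/L}\bigl(y\sigma(b)\sigma(x)\bigr) = \phi_{\sigma(b)}(y,x),
\]
where the middle equality uses commutativity of $E$. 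Thus $\phi_b(y,x) = \epsilon\,\sigma(\phi_b(x,y))$ for $\epsilon \in \{\pm 1\}$ exactly when $\sigma(b) = \epsilon b$. For (ii), under the regular representation $a \mapsto m_a$ the defining relation of the adjoint involution becomes, again by commutativity of $E$,
\[
\phi_b(ax, y) = \tr_{E/L}(axb\sigma(y)) = \tr_{E/L}(xb\sigma(\sigma(a)y)) = \phi_b(x, m_{\sigma(a)}\, y),
\]
so $\tau_{\phi_b}(m_a) = m_{\sigma(a)}$, which is exactly what it means for the regular representation to yield a morphism $(E,\sigma) \hookrightarrow (A,\tau_{\phi_b})$ of $L$-algebras with involution.

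For part (iii), I plan to run the cycle (a) $\Rightarrow$ (c) $\Rightarrow$ (b) $\Rightarrow$ (a). From (a), an isometry between $\phi_b$ and $f$ conjugates $(\End_L(E),\tau_{\phi_b})$ isomorphically to $(A,\tau_f)=(A,\tau)$, and composition with the embedding from (ii) gives (c). Conversely, given (c), the embedding $\iota\colon E \hookrightarrow A$ makes $L^n$ an $E$-module; since $E$ is \'etale (hence semisimple) of $L$-dimension $n$, this module is free of rank $1$, so fixing an $E$-isomorphism $E \xrightarrow{\sim} L^n$ and transporting $f$ back produces a form $h$ on $E$ equivalent to $f$, and the intertwining $\iota\circ\sigma = \tau\circ\iota$ is precisely the identity \eqref{E:h}, giving (b).

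Finally, for (b) $\Rightarrow$ (a) I will reconstruct $b$ directly from $h$. The $L$-linear functional $x \mapsto h(x,1)$ equals $x \mapsto \tr_{E/L}(xb)$ for a unique $b \in E$ by nondegeneracy of the trace; then applying \eqref{E:h} with $a$ replaced by $\sigma(y)$ and $y$ by $1$ yields
\[
h(x,y) = h(\sigma(y)x,\,1) = \tr_{E/L}(\sigma(y)xb) = \tr_{E/L}(xb\sigma(y)) = \phi_b(x,y),
\]
so $h = \phi_b$, and $b$ inherits the correct $\sigma$-symmetry from $h$ via part (i). The main technical point, and the only step where care is genuinely needed, is the bookkeeping of $\sigma$-symmetries: the intrinsic trace identity used in (i) must be justified when $\sigma\vert_L$ is nontrivial (which is what forces the use of \'etaleness of $E/L$), and the manipulation in (b) $\Rightarrow$ (a) hinges on combining commutativity of $E$ with \eqref{E:h} in just the right way so as to collapse $h$ into the trace form $\phi_b$.
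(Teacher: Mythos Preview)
Your argument is correct. For parts (i) and (ii) you do exactly what the paper does---the paper merely says these are ``immediate consequences of the definitions'' after noting that nondegeneracy comes from the trace form of an \'etale algebra, and you have just written that out.

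For part (iii) there is a genuine difference in organization. The paper proves only the easy implications (a) $\Rightarrow$ (b) $\Rightarrow$ (c) (take $h=\phi_b$, then use (ii) together with $(A,\tau_h)\simeq(A,\tau_f)$) and outsources the remaining implication (c) $\Rightarrow$ (a) to \cite[Prop.~7.1]{PrRap:Deven}, remarking that it is not needed in the sequel. You instead traverse the cycle (a) $\Rightarrow$ (c) $\Rightarrow$ (b) $\Rightarrow$ (a) and give a self-contained argument throughout: the step (c) $\Rightarrow$ (b) via the observation that $L^n$ is free of rank~1 over the \'etale algebra $E$ is the substantive input that the paper delegates to the reference, and your (b) $\Rightarrow$ (a) reconstruction of $b$ from the linear functional $x\mapsto h(x,1)$ is precisely the content of the paper's remark immediately following the proposition. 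So your route is a bit more direct and avoids the citation, at the cost of spelling out the module-theoretic step; the paper's route is shorter because it is content to prove only the direction actually used later.
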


\begin{proof}[Sketch of proof]
The nondegeneracy of $\phi_b$ in (i)  follows from the fact that
the $L$-bilinear form on $E$ given by $(x , y) \mapsto
\tr_{E/L}(xy)$ is nondegenerate as $E/L$ is \'etale; other
assertions in (i) and (ii) are immediate consequences of the
definitions. The implications (a) $\Rightarrow$ (b) $\Rightarrow$ (c)
in (iii) are obvious, and the equivalence (a) $\Leftrightarrow$
(c) (which we will not need) is established in \cite[Prop.~7.1]{PrRap:Deven}.
\end{proof}

We also note that in fact any nondegenerate
hermitian/skew-hermitian form $h$ on $E$ satisfying \eqref{E:h} is
of the form $\phi_b$ for some $b \in E^{\times}$ of the respective
type. Indeed, since the form $\phi_1$ is nondegenerate, we can write
$h$ in the form $h(x , y) = \mathrm{tr}_{E/L}(x\cdot g(\sigma(y)))$ for
some $K$-linear automorphism $g$ of $E$. Then \eqref{E:h} implies
that $g$ is $E$-linear, and therefore is of the form $g(x) = bx$ for
some $b \in E^{\times}$, which will necessarily be of appropriate
type.

\begin{eg}[involutions of the first kind] \label{quad.etale} 
According to Proposition 2.2 in
\cite{PrRap:Deven}, if $L = K$ and $(E , \sigma)$ is a $K$-algebra
with involution of dimension $n = 2\ell$ satisfying \eqref{E:dimen}
then
$$
(E , \sigma) \simeq (F[\delta]/(\delta^2 - d) , \theta)
$$
where $F = E^{\sigma}$, $d \in F^{\times}$, and $\theta(\delta) = -\delta$.

For invertible $b \in E^{\sigma}$ and $x_i, y_i \in F$, we have 
\[
\phi_b(x_1 + y_1\delta , x_2+y_2\delta) = \tr_{E/K}(bx_1x_2 - bdy_1y_2) =  \tr_{F/K}(2b (x_1 x_2 - d y_1 y_2)),
\]
so $\phi_b$ is the transfer from $F$ to $K$ of 
 the symmetric bilinear form $\qform{2b, -2bd}$.  Clearly, if $E$ is $F \times F$, then $\phi_b$ is hyperbolic.
\end{eg}

The example gives the entries in the $\phi_b$ column of Table \ref{R.tori}.

\begin{prop}[type $\tC$] \label{C.split}
Let $\Es$ be an \'etale $K$-algebra of dimension $n = 2\ell$ with
involution satisfying \eqref{E:dimen}.  Then for every symplectic
involution $\tau$ on $M_{n}(K)$, there is a $K$-embedding $\Es
\hookrightarrow (M_{n}(K), \tau)$.
\end{prop}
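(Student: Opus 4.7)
The plan is to reduce the statement to Proposition \ref{P:exist}(ii) by exhibiting an invertible skew-hermitian element in $E$. The key simplification, special to this type, is that since $n = 2\ell$ is even, any two nondegenerate skew-symmetric forms on $K^n$ are equivalent, and consequently any two symplectic involutions on $M_n(K)$ are conjugate by an inner automorphism coming from $\GL_n(K)$. So it suffices to produce an embedding $\Es \hookrightarrow (M_n(K), \tau_{\phi_b})$ for a single convenient choice of $b$; conjugating by the element of $\GL_n(K)$ that carries $\phi_b$ to the alternating form associated with $\tau$ then yields the required embedding into $(M_n(K), \tau)$.

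To produce $b$, I would invoke Example \ref{quad.etale}: since $\Es$ is of the first kind, has dimension $n = 2\ell$, and satisfies \eqref{E:dimen}, we can write
\[
(E , \sigma) \simeq (F[\delta]/(\delta^2 - d) , \theta)
\]
with $F = E^{\sigma}$, $d \in F^{\times}$, and $\theta(\delta) = -\delta$. The element $b := \delta$ is then invertible (inverse $d^{-1}\delta$) and skew-hermitian. By Proposition \ref{P:exist}(i) the form $\phi_b$ on $E$ is nondegenerate and skew-symmetric, and by Proposition \ref{P:exist}(ii) the regular representation of $E$ on itself gives an embedding $\Es \hookrightarrow (\End_K(E) , \tau_{\phi_b}) \simeq (M_n(K), \tau_{\phi_b})$.

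I do not anticipate a substantive obstacle here. The uniqueness of the alternating form in even dimension eliminates any need to match discrete invariants between $\phi_b$ and $\tau$, so the argument collapses to ``produce a skew-hermitian unit in $E$ and feed it into Proposition \ref{P:exist}.'' This should be contrasted with the type $\tB$ situation (Proposition \ref{B.split}), where the isometry classes of nondegenerate symmetric bilinear forms of a given dimension form a much richer set, and matching $\phi_b$ to the prescribed split symmetric form associated to a quasi-split orthogonal involution $\tau$ will require genuine work.
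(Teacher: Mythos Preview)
Your proposal is correct and follows essentially the same approach as the paper: both use Example \ref{quad.etale} to produce the skew-symmetric unit $b = \delta$, invoke Proposition \ref{P:exist}(i) to see that $\phi_b$ is nondegenerate skew-symmetric, and then use the uniqueness of the alternating form in dimension $n$ to match $\tau$. The only cosmetic difference is that the paper cites Proposition \ref{P:exist}(iii) directly, whereas you cite (ii) and spell out the conjugation step; these amount to the same thing.
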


\begin{proof}
It follows from the structure of $(E , \sigma)$ in the example that
there exists a skew-symmetric \emph{invertible} $b \in E$ (one can
take, for example, the element corresponding to $\delta$); then by
Proposition \ref{P:exist}(i), the form $\phi_b$ is nondegenerate and
skew-symmetric. On the other hand, since $\tau$ is symplectic, we
have $\tau = \tau_f$ for some nondegenerate skew-symmetric form $f$
on $K^{n}$. As any two such forms are equivalent, our assertion
follows from Proposition \ref{P:exist}(iii).
\end{proof}

To handle the algebras corresponding to types $\tB$ and $\tD$, we
need the following.
\begin{lem}\label{L:form}
Let $(E , \sigma)$ be a commutative \'etale $K$-algebra with
involution of dimension $n = 2\ell$ satisfying \eqref{E:dimen}. Then
there exists a nondegenerate symmetric bilinear form $h$ on $E$ that
satisfies \eqref{E:h} and has Witt index $\geqslant \ell - 1$.
\end{lem}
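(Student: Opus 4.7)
The plan is to combine the explicit classification from Proposition~\ref{P:exist} (every form satisfying \eqref{E:h} is some $\phi_b$) with the structural description of $(E,\sigma)$ from Example~\ref{quad.etale}, and then to make a careful choice of~$b$.

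\textbf{Setup.} By Example~\ref{quad.etale}, I may assume $(E,\sigma)=(F[\delta]/(\delta^2-d),\theta)$ with $F=E^{\sigma}$ an \'etale $K$-algebra of dimension $\ell$, $d\in F^{\times}$, and $\theta(\delta)=-\delta$. By Proposition~\ref{P:exist}(ii) together with the remark recorded right after its proof, any nondegenerate $K$-bilinear form $h$ on $E$ satisfying \eqref{E:h} is of the form $h=\phi_b$ for some $b\in E^{\times}$, and the symmetry of $h$ forces $b\in F^{\times}$. Carrying out the computation of $\phi_b$ on the decomposition $E=F\oplus F\delta$ (as in Example~\ref{quad.etale}) gives
\[
\phi_b\simeq \tr_{F/K}(\qform{2b})\perp\tr_{F/K}(\qform{-2bd})\simeq \tr_{F/K}(\qform{2b,-2bd}),
\]
so the task becomes to choose $b\in F^{\times}$ making the anisotropic part of this Scharlau transfer have $K$-dimension at most~$2$.

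\textbf{Factor-by-factor reduction.} Write $F=\prod_iF_i$ as a product of fields with $d=(d_i)$ and $b=(b_i)$; then $\phi_b$ splits orthogonally as $\bigoplus_i\phi_{b_i}^{(i)}$ with $\phi_{b_i}^{(i)}=\tr_{F_i/K}(\qform{2b_i,-2b_i d_i})$. For indices with $d_i\in F_i^{\times 2}$ (equivalently $E_i\simeq F_i\times F_i$), the $F_i$-form $2b_i\qform{1,-d_i}$ is hyperbolic over $F_i$, so $\phi_{b_i}^{(i)}$ is $K$-hyperbolic of Witt index $[F_i:K]$ for every $b_i$, as noted at the end of Example~\ref{quad.etale}. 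It therefore suffices to treat the remaining \emph{Type~2} indices $j$ with $d_j\notin F_j^{\times 2}$, where $E_j=F_j(\sqrt{d_j})$ is a quadratic field extension of $F_j$; the goal becomes to choose the $b_j$'s so that $\bigoplus_j\phi_{b_j}^{(j)}$ has anisotropic dimension $\le 2$ over~$K$.

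\textbf{Choice of the $b_j$'s; main obstacle.} Using the identity $\qform{2b_j,-2b_j d_j}=2b_j\qform{1,-d_j}$ together with the projection formula for the Scharlau transfer, the Witt class $[\phi_{b_j}^{(j)}]\in W(K)$ can be adjusted by varying $b_j$. For a single Type~2 factor one can choose $b_j$ so that $-2b_jd_j$ represents a ``trace-zero'' class modulo squares in $F_j$, forcing $\tr_{F_j/K}(\qform{-2b_jd_j})$ to contribute a hyperbolic summand of dimension $[F_j:K]$ (in the same spirit as the explicit computation at the end of Example~\ref{quad.etale}); this cuts the anisotropic part of $\phi_{b_j}^{(j)}$ down to at most $[F_j:K]$, and with further scaling of $b_j$ one can reduce to anisotropic dimension $\le 2$. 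The main obstacle is the case of several Type~2 factors simultaneously, where individual anisotropic parts would add up beyond~$2$; one must engineer cancellations in $W(K)$ between different factors by sign-flipping the $b_j$'s (noting that $b_j\mapsto -b_j$ negates the Witt class $[\phi_{b_j}^{(j)}]$) so that remainders combine into a hyperbolic form modulo at most a single $2$-dimensional anisotropic piece. Combining the hyperbolic contributions from the $d_i\in F_i^{\times 2}$ factors with the $\le 2$-dimensional anisotropic remainder from the Type~2 factors then yields $h=\phi_b$ of Witt index $\ge\ell-1$, as required.
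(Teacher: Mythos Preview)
Your overall strategy---realize $h$ as $\phi_b$ and choose $b$ factor by factor---is reasonable, and in fact the authors mention (in the remark following the lemma) that an earlier version of the paper constructed $h$ as $\phi_b$ ``using some matrix computations.'' However, your execution has a genuine gap at the crucial step.

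The problem is that your claims about the Type~2 factors are asserted, not proved. You write that for a single Type~2 factor one can choose $b_j$ so that $\tr_{F_j/K}(\qform{-2b_jd_j})$ ``contribute[s] a hyperbolic summand of dimension $[F_j:K]$''; but this form \emph{has} dimension $[F_j:K]$, so you are claiming it can be made hyperbolic, which is impossible whenever $[F_j:K]$ is odd. The phrase ``trace-zero class modulo squares'' has no clear meaning here, and the subsequent assertion that ``with further scaling of $b_j$ one can reduce to anisotropic dimension $\le 2$'' is exactly the content of the lemma in the field case and requires proof. The multi-factor paragraph is even more schematic: sign-flipping $b_j\mapsto -b_j$ does negate the Witt class of each summand, but there is no argument that signs can be chosen so that the sum of the residual anisotropic pieces collapses to dimension $\le 2$ over an arbitrary field $K$. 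So as written the proposal does not establish the lemma.

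For comparison, the paper avoids these Witt-ring bookkeeping issues entirely. It picks a skew element $e\in E$ (i.e., $\sigma(e)=-e$) that generates $E$ as a $K$-algebra, writes every $x\in E$ as $\sum c_i(x)e^i$, and sets $h(x,y)=c_{n-2}(x\sigma(y))$. Nondegeneracy is checked directly, and the span of $1,e,\dots,e^{\ell-2}$ is visibly totally isotropic, giving Witt index $\ge \ell-1$ at once. This is both shorter and works uniformly over any infinite $K$ (finite $K$ being trivial), without any case analysis on the factor structure of $F$. If you want to salvage your approach, you would need to actually exhibit the element $b$; one way is to note a posteriori that the paper's $h$ is itself some $\phi_b$ and to identify $b$ via the dual basis to $1,e,\dots,e^{n-1}$ with respect to the trace pairing.
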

\begin{proof}
If $K$ is finite then one can take, for example, $h = \phi_1$, so we
can assume in the rest of the argument that $K$ is infinite. It
follows from the  description of $E$ that, for $\Kalg$ an algebraic closure of $K$,
$$
(E \otimes_K \overline{K} , \sigma \otimes \mathrm{id}_K) \simeq (M
, \mu)
$$
where $M = \prod_{i = 1}^{\ell} (\overline{K} \times \overline{K})$
and $\mu$ acts on each copy of $\overline{K} \times \overline{K}$ by
switching components. Viewing $M$ as an affine $n$-space, consider
the $K$-defined subvariety  $M_{-} := \{ x \in M \mid \mu(x) = -x
\}$.
Clearly, $M_{-}$ is a $K$-defined vector space,
so the $K$-points $E_{-} := M_{-} \cap E$
are Zariski-dense in $M_{-}$.
On the other hand, let $U \subset M$ be the Zariski-open subvariety
of elements with pairwise distinct components; then any $x \in U$
generates $M$ as a $\overline{K}$-algebra. Furthermore, it is easy to
see that $U \cap M_{-} \neq \emptyset$, so $U \cap E_{-} \neq
\emptyset$.

Fix $e \in U \cap E_{-}$; then $1, e, \ldots , e^{n-1}$ form a
$K$-basis of $E$. For $x \in E$ we define $c_i(x)$ for $i = 0, \ldots ,
n-1$ so that $x = \sum_{i = 0}^{n-1} c_i(x) e^i$. Set
$$
h(x , y) := c_{n-2}(x \sigma(y)).
$$
Clearly, $h$ is symmetric bilinear and satisfies \eqref{E:h}. Let us
show that $h$ is nondegenerate. If $x = \sum_{i = 0}^{n-1} c_i(x)
e^i$ is in the radical  of $h$, then so is $\sigma(x)$, and
therefore also $x_{+} := \sum_{i = 0}^{\ell - 1} c_{2i}(x) e^{2i}$
and $x_{-} := \sum_{i = 0}^{\ell - 1} c_{2i+1}(x)e^{2i+1}$. From
$h(x_{+} , 1) = 0$, $h(x_{+} , e^2) = 0$, etc., we successively
obtain that $c_{n-2}(x) = 0$, $c_{n-4}(x) = 0$, etc., i.e., $x_{+} =
0.$ Furthermore, we have $0 = h(x_{-} , e^{-1}) = - c_{n-1}(x)$. Then
from $h(x_{-} , e) = 0$, $h(x_{-} , e^3) = 0$, etc., we successively
obtain $c_{n-3}(x) = 0$, $c_{n-5}(x) = 0$, etc. Thus, $x_{-} = 0$,
hence $x = 0$, as required.
It remains to observe that the subspace spanned by $1, e, \ldots ,
e^{\ell - 2}$ is totally isotropic with respect to $h$.
\end{proof}

\begin{rmk*}
In an earlier version of this paper, we constructed $h$ in Lemma
\ref{L:form} in the form $h = \phi_b$ using some matrix
computations. The current proof, which minimizes computations, was
inspired by \cite[\S5]{BhGross}.
\end{rmk*}

\begin{prop}[type $\tB$] \label{B.split}
Let $\Es$ be an \'etale $K$-algebra of dimension $n= 2\ell + 1$ with
involution satisfying \eqref{E:dimen}. If $\tau$ is an orthogonal
involution on $A = M_n(K)$ such that $\tau = \tau_f$ where $f$ is a
nondegenerate symmetric bilinear form on $K^n$ of Witt index $\ell$
then there exists an embedding $\Es \hookrightarrow (A , \tau)$ of
$K$-algebras with involution.
\end{prop}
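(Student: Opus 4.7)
The plan is to construct a symmetric bilinear form $h$ on $E$ that satisfies \eqref{E:h} and is equivalent to $f$; once such an $h$ is available, the implication (b)$\Rightarrow$(c) of Proposition \ref{P:exist}(iii) supplies the desired embedding $\Es \hookrightarrow (A, \tau)$. To produce $h$, I would first invoke the decomposition \eqref{E:decomp} to write $(E, \sigma) = (E', \sigma') \times (K, \mathrm{id}_K)$, where $E' \subset E$ is a $\sigma$-invariant subalgebra with $\dim_K E' = 2\ell$ and $\dim_K (E')^{\sigma'} = \ell$, and then apply Lemma \ref{L:form} to $(E', \sigma')$ to obtain a nondegenerate symmetric bilinear form $h'$ on $E'$ that satisfies \eqref{E:h} and has Witt index at least $\ell - 1$. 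I would then set $h := h' \perp \qform{c}$ on $E = E' \times K$ for a parameter $c \in K^{\times}$ to be chosen; because $\sigma$ restricts to the identity on the $K$-factor, $h$ automatically satisfies \eqref{E:h} on all of $E$.

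The main task is to choose $c$ so that $h$ has Witt index $\ell$, in order to match $f$. If $h'$ is already hyperbolic then any $c$ works. Otherwise a Witt decomposition gives $h' \simeq \mathbb{H}^{\ell - 1} \perp \qform{\alpha, \beta}$ with $\alpha, \beta \in K^{\times}$, and taking $c = -\alpha$ makes the three-dimensional form $\qform{\alpha, \beta, -\alpha}$ visibly isotropic (via $(1, 0, 1)$), so $h$ has Witt index at least $\ell$; since $\dim h = 2\ell + 1$ is odd, the Witt index is exactly $\ell$. I expect this to be the crux of the argument: Lemma \ref{L:form} furnishes only the lower bound $\ell - 1$, and the one-dimensional $K$-factor supplied by \eqref{E:decomp} provides exactly the extra flexibility needed to push the Witt index up to the value $\ell$ dictated by the hypothesis that $f$ is split.

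To finish, both $h$ and $f$ are nondegenerate $(2\ell + 1)$-dimensional symmetric bilinear forms of Witt index $\ell$, so each is equivalent to $\mathbb{H}^{\ell} \perp \qform{d}$ for some $d \in K^{\times}$. Any two such forms are similar (rescale by the ratio of their one-dimensional anisotropic parts, exploiting $\mu \mathbb{H} \simeq \mathbb{H}$), so $\mu h \simeq f$ for some $\mu \in K^{\times}$. The rescaled form $\mu h$ still satisfies \eqref{E:h}, and Proposition \ref{P:exist}(iii)(b)$\Rightarrow$(c) then yields the desired embedding $\Es \hookrightarrow (A, \tau)$.
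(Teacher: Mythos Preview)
Your proof is correct and follows essentially the same route as the paper's: decompose via \eqref{E:decomp}, apply Lemma~\ref{L:form} to the $2\ell$-dimensional piece $(E',\sigma')$, then use the $1$-dimensional $K$-factor to bump the Witt index from $\ell-1$ to $\ell$ and invoke Proposition~\ref{P:exist}(iii). Your final paragraph making the similarity-versus-equivalence step explicit (rescaling $h$ by $\mu$) is a nice clarification of what the paper compresses into the single phrase ``so $h$ is equivalent to $f$'', implicitly using that $\tau_f = \tau_{\mu f}$.
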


\begin{proof}
Pick a decomposition \eqref{E:decomp}, and then use Lemma
\ref{L:form} to find a form $h'$ on $E'$ with the properties
described therein. We can write $h' = h'_1 \perp h'_2$ where $h'_1$
is a direct sum of $\ell - 1$ hyperbolic planes and $h'_2$ is a
binary form. Choose a 1-dimensional form $h''$ so that $h'_2 \perp
h''$ is isotropic, and consider $h = h' \perp h''$ on $E = E' \times
K$. Then $h$ is a nondegenerate symmetric bilinear form on $E$
satisfying \eqref{E:h} and having Witt index $\ell$. So, $h$ is
equivalent to $f$, hence $(E , \sigma)$ embeds in $(A , \tau)$ by
Proposition \ref{P:exist}(iii).
\end{proof}

\begin{rmk}\label{R:same}
Let now $G_1$ be the $K$-split adjoint group $
\SO_{2\ell+1}$ of type $\tB_{\ell}$
and $G_2$ be the $K$-split simply connected group $\Sp_{2\ell}$ of type
$\tC_{\ell}$ where $\ell \geqslant 2$. It was observed in
\cite{PrRap:weakly}, Example 6.7, that $G_1$ and $G_2$ have the same
isomorphism classes of maximal $K$-tori over any field $K$ of
characteristic $\neq 2$. This was derived from the fact that $G_1$
and $G_2$ have isomorphic Weyl groups using the results of
\cite{Gille:tori} or \cite{Ragh:tori}. Now, we are in a
position to give a much simpler explanation of this phenomenon.
Indeed, $G_1 = \mathrm{SU}(A_1 , \tau_1)$ where $A_1 = M_{2\ell +
1}(K)$ and $\tau_1$ is an orthogonal involution on $A_1$
corresponding to a nondegenerate symmetric bilinear form on
$K^{2\ell + 1}$ of Witt index $\ell$, and $G_2 = \mathrm{SU}(A_2 ,
\tau_2)$ where $A_2 = M_{2\ell}(K)$ and $\tau_2$ is a symplectic
involution on $A_2$ corresponding to a nondegenerate skew-symmetric
form on $K^{2\ell}$. Any maximal $K$-torus $T_2$ of $G_2$ is of the
form $\mathrm{SU}(E_2 , \sigma_2)$ where $E_2$ is a
$2\ell$-dimensional commutative $\tau_2$-invariant subalgebra of
$A_2$, $\sigma_2 = \tau_2 \vert_{E_2}$, with $(E_2 , \sigma_2)$
satisfying \eqref{E:dimen}. Set $(E_1 , \sigma_1) = (E_2 , \sigma_2)
\times (K , \mathrm{id}_K)$. According to Proposition \ref{B.split},
there exists an embedding $(E_1 , \sigma_1) \hookrightarrow (A_1 ,
\tau_1)$, which gives rise to a $K$-isomorphism between $T_2$ and
the maximal $K$-torus $T_1 = \mathrm{SU}(E_1 , \sigma_1)$ of $G_1$.
This, combined with the symmetric argument based on Proposition
\ref{C.split}, yields the required fact. Then, repeating the
argument given in \emph{loc.~cit.}, we conclude that if $K$ is a
number field then for any finite subset $S \subset V^K$ containing
$V^K_{\infty}$, the $S$-arithmetic subgroups of $G_1$ and $G_2$ are
weakly commensurable.
\end{rmk}

Turning now to type $\tD_{\ell}$, we first observe that if $(E ,
\sigma)$ is a $K$-algebra with involution of dimension $n = 2\ell$
satisfying \eqref{E:dimen} then the determinant --- viewed as an element of $K^{\times}/{K^{\times 2}}$ ---
of the symmetric
bilinear form $\phi_b$ for invertible $b \in E^{\sigma}$ does not
depend on $b$ (cf.~\cite[Cor.~4.2]{BKM}) and will be denoted $d(E , \sigma)$. Now, if
$\tau$ is an involution on $A = M_n(K)$ that corresponds to a
symmetric bilinear form $f$ on $K^n$ having determinant $d(f)$ then
it follows from Proposition \ref{P:exist}(iii) that an embedding $(E
, \sigma) \hookrightarrow (A , \tau)$ can exist only if $d(E ,
\sigma) = d(f)$ in $K^{\times}/{K^{\times 2}}$.

\begin{prop}\label{D.qsplit}
Let $(E , \sigma)$ be an \'etale $K$-algebra of dimension $n =
2\ell$ with involution satisfying \eqref{E:dimen}. If $\tau$ is an
orthogonal involution on $A = M_n(K)$ such that $\tau = \tau_f$
where $f$ is a nondegenerate symmetric bilinear form on $K^n$ of
Witt index $\geqslant \ell - 1$ such that $d(E , \sigma) = d(f)$ (in
$K^{\times}/{K^{\times 2}}$) then there exists an embedding $(E ,
\sigma) \hookrightarrow (A , \tau)$ of $K$-algebras with involution.
\end{prop}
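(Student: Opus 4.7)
The plan is to adapt the approach of Proposition \ref{B.split}. By Lemma \ref{L:form}, choose a nondegenerate symmetric bilinear form $h$ on $E$ satisfying \eqref{E:h} with Witt index at least $\ell - 1$. By the remark following Proposition \ref{P:exist}, we may write $h = \phi_{b_0}$ for some $\sigma$-symmetric invertible $b_0 \in E$, and hence $d(h) = d(E, \sigma) = d(f)$.

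The key step is to show that $h$ and $f$ are similar, i.e.\ $f \cong c \cdot h$ for some $c \in K^{\times}$. Granting this, the scaling identity $\phi_{c b_0}(x,y) = \mathrm{tr}_{E/K}(x \cdot cb_0 \cdot \sigma(y)) = c \cdot \phi_{b_0}(x,y) = c \cdot h(x,y)$ shows $\phi_b \cong f$ for $b := c b_0 \in E^{\sigma}$, and then Proposition \ref{P:exist}(iii) (the implication (a) $\Rightarrow$ (c)) furnishes the embedding $(E, \sigma) \hookrightarrow (A, \tau)$. To establish the similarity, I would distinguish two cases based on $d(f)$. If $d(f) = (-1)^{\ell}$, then in the decomposition $f = (\ell - 1) H \perp f_2$ (where $H$ denotes a hyperbolic plane) the $2$-dimensional residue $f_2$ has discriminant $-1$, is therefore isotropic, and so is itself hyperbolic; thus $f$ is hyperbolic, and the same reasoning applied to $h$ (whose discriminant matches that of $f$) shows $h$ is hyperbolic too, so $f \cong h$. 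If instead $d(f) \neq (-1)^{\ell}$, no $2\ell$-dimensional form of this discriminant can be hyperbolic, so both $f$ and $h$ have Witt index exactly $\ell - 1$; writing $f = (\ell - 1) H \perp f_2$ and $h = (\ell - 1) H \perp h_2$ with anisotropic $2$-dimensional parts $f_2, h_2$ of common discriminant, one invokes the elementary fact that any two nondegenerate $2$-dimensional forms of a given discriminant are similar (the identity $\qform{a, b} \cong a \cdot \qform{1, ab}$ exhibits each such form as a scalar multiple of a canonical form). Hence $h_2 \cong c \cdot f_2$ for some $c \in K^{\times}$, and since a scalar multiple of a hyperbolic plane is again a hyperbolic plane, this upgrades to $h \cong c \cdot f$.

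The main obstacle is the similarity step, and the crux is the fact --- particular to dimension~$2$ --- that a nondegenerate symmetric bilinear form is determined up to a scalar by its discriminant. The quasi-split hypothesis that $f$ has Witt index at least $\ell - 1$ is precisely what allows the global comparison of $f$ and $h$ to be reduced, via Witt cancellation, to this $2$-dimensional comparison; the identity $\phi_{cb_0} = c\,\phi_{b_0}$ then converts a similarity of forms into the required adjustment of $b$.
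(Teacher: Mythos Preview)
Your argument is correct and follows the same route as the paper's proof: construct $h$ via Lemma~\ref{L:form}, identify $h=\phi_{b_0}$ so that $d(h)=d(E,\sigma)=d(f)$, split off $\ell-1$ hyperbolic planes from each of $h$ and $f$, and use that binary forms of equal discriminant are similar to conclude $h$ and $f$ are similar; then invoke Proposition~\ref{P:exist}(iii). The only difference is cosmetic: your case split on whether $d(f)=(-1)^{\ell}$ is unnecessary, since the statement ``two nondegenerate binary forms of the same discriminant are similar'' already covers the hyperbolic case (discriminant $-1$) uniformly, which is how the paper proceeds.
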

\begin{proof}
Let $h$ be the symmetric bilinear form on $E$ constructed in Lemma
\ref{L:form}. As we observed after Proposition \ref{P:exist}, $h$ is
actually of the form $h = \phi_b$ for some invertible $b \in
E^{\sigma}$, so $d(h) = d(E , \sigma)$. We can write $h = h_1 \perp
h_2$ where $h_1$ is a direct sum of $\ell - 1$ hyperbolic planes and
$h_2$ is a binary form. Similarly, $f = f_1 \perp f_2$ where $f_1$
is a direct sum of $\ell - 1$ hyperbolic planes and $f_2$ is binary.
Then $d(E , \sigma) = d(f)$ implies that $d(h_2) = d(f_2)$, so $h_2$
and $f_2$ are similar. Thus, a suitable multiple of $h$ is
equivalent to $f$, and our claim follows from Proposition
\ref{P:exist}(iii).
\end{proof}

Finally, we will treat algebras corresponding to the groups of type
$^{2}\!\tA_{\ell}$. Here $L$ will be a quadratic extension of $K$
and all involutions will restrict to the nontrivial automorphism of
$L/K$.

\begin{prop}[type $\tA$]\label{A.qsplit}
Let $(E , \sigma)$ be an \'etale $n$-dimensional $L$-algebra with
involution. If $\tau$ is a unitary involution on $A = M_n(L)$ such
that $\tau = \tau_f$ where $f$ is a hermitian form on $L^n$ having
Witt index $m := [ n/2 ]$, then there exists an embedding $(E ,
\sigma) \hookrightarrow (A , \tau)$ of $L$-algebras with involution.
\end{prop}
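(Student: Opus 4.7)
The plan is to use Proposition \ref{P:exist}(iii) to reduce the embedding question to producing an invertible hermitian $b \in E^{\sigma}$ such that $\phi_b$ is equivalent to $f$. We follow the pattern of Propositions \ref{B.split} and \ref{D.qsplit}: first produce a $\phi_b$ of maximum possible Witt index $m = [n/2]$, and then (when $n$ is odd) rescale $b$ by an element of $K^{\times}$ to match the residual discriminant.

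By condition \eqref{E:dimen} for involutions of the second kind, $F := E^{\sigma}$ is an \'etale $K$-algebra of dimension $n$ with $(E, \sigma) \cong (F \otimes_K L,\, \mathrm{id}_F \otimes \sigma\vert_L)$. Assuming $K$ is infinite (the finite case is addressed below), I would mimic the construction of Lemma \ref{L:form}: pick a generic $e \in F$ whose powers $1, e, \ldots, e^{n-1}$ form both a $K$-basis of $F$ and an $L$-basis of $E$, and set
\[
h(x,y) := c_{n-1}(x\, \sigma(y)),
\]
where $c_i(z) \in L$ denotes the $e^i$-coefficient of $z \in E$. Since $\sigma$ fixes $e$ and restricts nontrivially to $L$, a direct check shows that $h$ is a $\sigma\vert_L$-sesqui-linear hermitian form satisfying \eqref{E:h}. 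In the power basis its Gram matrix is $[c_{n-1}(e^{i+j})]$, with $1$'s on the anti-diagonal (arising from $i + j = n-1$) and zeros strictly above it (from $i + j < n - 1$), so its determinant is $\pm 1$ and $h$ is nondegenerate. Moreover, for $0 \le i, j \le m - 1$ we have $i + j \le 2m - 2 \le n - 2$, so $h(e^i, e^j) = 0$; hence the $L$-span of $1, e, \ldots, e^{m-1}$ is totally isotropic, forcing the Witt index of $h$ to be exactly $m$. By the remark following Proposition \ref{P:exist}, $h = \phi_b$ for some invertible hermitian $b \in E^{\sigma}$.

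If $n = 2m$ is even, both $\phi_b$ and $f$ are nondegenerate hermitian forms of dimension $n$ and Witt index $m$, hence both hyperbolic, hence equivalent, and Proposition \ref{P:exist}(iii) yields the embedding. If $n = 2m + 1$ is odd, then $\phi_b \simeq \mathbb{H}^m \perp \qform{d_0}_L$ and $f \simeq \mathbb{H}^m \perp \qform{d(f)}_L$ for discriminants $d_0, d(f) \in K^{\times}/N_{L/K}(L^{\times})$. Replacing $b$ by $cb$ for $c \in K^{\times}$ scales $\phi_b$ by $c$ and multiplies its discriminant by $c^n$; since $n$ is odd, $c^n \equiv c \pmod{N_{L/K}(L^{\times})}$ (as $c^{n-1}$ is a square and squares in $K^{\times}$ are norms from $L$). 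Choosing $c$ to represent $d(f) \cdot d_0^{-1}$ makes $\phi_{cb}$ equivalent to $f$, and Proposition \ref{P:exist}(iii) again applies. When $K$ is finite, any two nondegenerate hermitian forms on $L^n$ of the same dimension are equivalent, so one can simply take $b = 1$.

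The main technical step is verifying the nondegeneracy of $h$ and exhibiting the totally isotropic subspace of dimension $m$; this is a routine computation with the power basis, strictly parallel to the argument in Lemma \ref{L:form}. Once this is in hand, the odd-dimensional case reduces to formal manipulation of discriminants modulo norms, and the even case is immediate from the uniqueness of the hyperbolic form.
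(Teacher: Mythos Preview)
Your proof is correct and takes essentially the same approach as the paper: construct $h(x,y) = c_{n-1}(x\sigma(y))$ from a primitive element $e \in F = E^{\sigma}$, verify nondegeneracy (your anti-triangular Gram matrix argument is equivalent to the paper's radical computation) and exhibit the totally isotropic subspace spanned by $1, e, \ldots, e^{m-1}$. The only difference is that you spell out the odd-dimensional discriminant-matching step explicitly via scaling $b$ by $c \in K^{\times}$, whereas the paper subsumes this in the opening reduction ``it is enough to construct a nondegenerate hermitian form on $E$ satisfying \eqref{E:h} of Witt index $m$'' (which suffices since $\tau$ depends only on the similarity class of $f$).
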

\begin{proof}
It is enough to construct a nondegenerate hermitian form on $E$ that
satisfies \eqref{E:h} and has Witt index $m$. If $K$ is finite, one
can take, for example, $h = \phi_1$, so we can assume that $K$ is
infinite. Set $F = E^{\sigma}$ so that $E = F \otimes_K L$. Since
$K$ is infinite, arguing as in the proof of Lemma \ref{L:form}, one
can find $e \in F$ so that $F = K[e]$. Then any $x \in E$ admits a
unique presentation of the form $x = \sum_{i = 0}^{n-1} e^i \ot c_i(x)$
with $c_i(x) \in L$. Define
$$
h(x , y) := c_{n-1}(x\sigma(y)).
$$
It is easy to see $h$ is a hermitian form satisfying \eqref{E:h};
let us show that it is nondegenerate. If $x$ is in the radical of
$h$ then from $h(x , 1) = 0$, $h(x , e) = 0$, etc., we successively
obtain that $c_{n-1}(x) = 0$, $c_{n-2}(x) = 0$, etc. Thus, $x = 0$,
proving the nondegeneracy of $h$. Since $2(m - 1) < n - 1$, the
subspace spanned by $1, e, \ldots , e^{m-1}$ is totally isotropic, hence the Witt index of $h$ is $m$, as required.
\end{proof}

\section{Maximal tori in real groups of types $\tB$ and
$\tC$}\label{S:real}

This section is devoted to determining the isomorphism classes of
maximal tori in certain linear algebraic groups, primarily of types
$\tB$ and $\tC$, over the real numbers. Recall that every torus $T$
over $\R$ is $\R$-isomorphic to the product
\begin{equation}\label{E:r-tori}
(\GL_1)^{\alpha} \times
(\mathrm{R}^{(1)}_{\C/\R}(\GL_1))^{\beta} \times
(\mathrm{R}_{\C/\R}(\GL_1))^{\gamma}
\end{equation}
for uniquely determined nonnegative integers $\alpha, \beta, \gamma$
\cite[p.~64]{Vos}, and then the group $T(\R)$ is topologically
isomorphic to $(\R^\times)^{\alpha} \times (S^1)^{\beta} \times
(\C^\times)^{\gamma}$, where $S^1$ is the group of complex numbers
of modulus $1$. The fact that $T$ is isomorphic to a maximal
$\R$-torus of a given reductive $\R$-group $G$ typically imposes
serious restrictions on the numbers $\alpha, \beta$ and $\gamma$. To
illustrate this, we first consider the following easy example.

\begin{eg} \label{GLn.eg}
\emph{Every maximal $\R$-torus in $G = \GL_{n , \hh}$, where $\hh$
is the algebra of Hamiltonian quaternions, is isomorphic to
$(\mathrm{R}_{\C/\R}(\GL_1))^n$.} Indeed, every maximal
$\R$-torus in $G$ is of the form $\mathrm{R}_{E/\R}(\GL_1)$
where $E$ is a maximal commutative $2n$-dimensional \'etale
subalgebra of $A = M_n(\hh).$ Any commutative $2n$-dimensional
\'etale $\R$-algebra $E$ is isomorphic to $\R^\alpha \times \C^\gamma$
with $\alpha + 2\gamma = 2n$. But in order for $E$ to have an
$\R$-embedding in $A$, we must have $\alpha = 0$ and then $\gamma =
n$ (cf.~\cite[2.6]{PrRap:Deven}), so our claim follows.
\end{eg}

We now recall the standard notations for some classical real
algebraic groups. We let $\mathrm{SO}(r , n - r)$ denote the special
orthogonal group of the $n$-dimensional quadratic form $q = r\qform{1}  \perp (n - r)\qform{-1}$.
 Similarly, we let
$\mathrm{Sp}(r , n - r)$ denote the special unitary group of the
$n$-dimensional hermitian form $h = r\qform{1} \perp (n - r)
\qform{-1}$ over $\hh$ with the standard involution. Every adjoint $\R$-group of type $\tB_{\ell}$ is
isomorphic to some $\mathrm{SO}(r , n - r)$ for $n = 2\ell + 1$ and
some $0 \leqslant r \leqslant n$, and every nonsplit simply
connected $\R$-group of type $\tC_{\ell}$ is isomorphic to
$\mathrm{Sp}(r , \ell - r)$  some $0 \leqslant r \leqslant \ell$.

\begin{lem}[Adjoint $\tB_{\ell}$ over $\R$] \label{B.R}
The maximal $\R$-tori in  $G = \SO(r , n - r)$, where $n = 2\ell +
1$,  are of the form \eqref{E:r-tori} with  $\alpha + \beta +
2\gamma = \ell$ and $\alpha + 2 \gamma \leqslant s := \min(r , n -
r)$.
\end{lem}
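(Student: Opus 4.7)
The plan is to apply the étale-algebra description of maximal tori from \S\ref{Stbg.sec} to $G = \SO(f)$, where $f = r\qform{1} \perp (n-r)\qform{-1}$. A maximal $\R$-torus of $G$ corresponds to a $\tau_f$-invariant commutative étale $\R$-subalgebra $(E, \sigma)$ of $M_n(\R)$ of dimension $n = 2\ell + 1$ with $\dim_\R E^\sigma = \ell + 1$, and by \eqref{E:decomp} we may write $(E, \sigma) = (E', \sigma') \times (\R, \mathrm{id})$ with $\dim_\R E' = 2\ell$ and $\dim_\R (E')^{\sigma'} = \ell$.

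First I would classify $(E', \sigma')$ up to $\R$-isomorphism. Any commutative étale $\R$-algebra with involution decomposes uniquely as a product of irreducible pieces drawn from $(\R, \mathrm{id})$, $(\C, \mathrm{id})$, $(\C, c)$, $(\R \times \R, \mathrm{swap})$, and $(\C \times \C, \mathrm{swap})$, whose fixed-subalgebra dimensions are $1, 2, 1, 1, 2$ out of total dimensions $1, 2, 2, 2, 4$. Since the fixed subalgebra of $(E', \sigma')$ must be exactly half-dimensional and no irreducible piece has fixed ratio below $1/2$, only the three pieces with fixed ratio exactly $1/2$---namely $(\C, c)$, $(\R \times \R, \mathrm{swap})$, and $(\C \times \C, \mathrm{swap})$---can occur. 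Writing their multiplicities as $\beta$, $\alpha$, $\gamma$, a direct inspection identifies the identity component of $\mathrm{SU}(\cdot, \cdot)$ on each piece with $\mathrm{R}^{(1)}_{\C/\R}(\GL_1)$, $\GL_1$, and $\mathrm{R}_{\C/\R}(\GL_1)$ respectively, giving the product decomposition \eqref{E:r-tori} for $T$ together with the identity $\alpha + \beta + 2\gamma = \ell$.

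Next I would apply Proposition \ref{P:exist}(iii) to determine when such an $(E, \sigma)$ embeds in $(M_n(\R), \tau_f)$: this happens exactly when $\phi_b$ is equivalent to $f$---equivalently, has signature $(r, n-r)$---for some invertible $b \in E^\sigma$. Proposition \ref{P:exist}(i) and Example \ref{quad.etale} compute $\phi_b$ orthogonally over the factors: on each $(\C, c)$ piece it equals $\qform{2b_i, 2b_i}$ (definite of sign $\mathrm{sgn}(b_i)$); on each $(\R \times \R, \mathrm{swap})$ or $(\C \times \C, \mathrm{swap})$ piece it is hyperbolic (of rank $2$ or $4$); on the trailing $(\R, \mathrm{id})$ it is $\qform{b}$. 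Letting $p \in \{0, 1, \ldots, \beta\}$ count the $(\C, c)$ factors on which $b_i > 0$ and $\epsilon \in \{0, 1\}$ record the sign on the $(\R, \mathrm{id})$ factor, the signature of $\phi_b$ is
\[
(2p + \alpha + 2\gamma + \epsilon,\ 2(\beta - p) + \alpha + 2\gamma + (1 - \epsilon)).
\]

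Finally I would match this with $(r, n - r)$. Necessity of $\alpha + 2\gamma \le s$ is immediate: each coordinate is at least $\alpha + 2\gamma$ because $p,\beta - p,\epsilon,1-\epsilon \ge 0$, so $s = \min(r, n-r) \ge \alpha + 2\gamma$. For sufficiency, assume $r \le n-r$ (so $r = s$) without loss of generality, set $t := s - \alpha - 2\gamma \ge 0$, and choose $\epsilon \in \{0, 1\}$ with $\epsilon \equiv t \pmod{2}$ and $p := (t - \epsilon)/2 \ge 0$; one checks $p \le \beta$ via a short calculation using $\alpha + \beta + 2\gamma = \ell$ and $s \le \ell$, and the second-coordinate equation then holds automatically since both signatures sum to $n$. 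This realizes any $T$ of the prescribed form as a maximal $\R$-torus of $G$. The only subtlety is the parity bookkeeping in picking $(p, \epsilon)$ for sufficiency; the rest is a direct application of \S\ref{Stbg.sec}.
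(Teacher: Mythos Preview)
Your proof is correct and follows essentially the same approach as the paper's: both decompose the \'etale algebra $(E,\sigma)$ into the indecomposable pieces in Table~\ref{R.tori}, read off $\alpha+\beta+2\gamma=\ell$, and use Proposition~\ref{P:exist}(iii) together with the explicit shape of $\phi_b$ on each factor to obtain the bound $\alpha+2\gamma\le s$ and then to realize every admissible $(\alpha,\beta,\gamma)$. The only cosmetic differences are that the paper phrases necessity via Witt index rather than signature, and normalizes to $r>n-r$ instead of $r\le n-r$ for the converse.
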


\begin{proof}
Let $\tau$ be the involution on $A = M_n(K)$ that corresponds to the
symmetric bilinear form $f$ associated with the quadratic form $q = r\qform{1} \perp (n-r) \qform{-1}$ so that $G =
\mathrm{SU}(A , \tau)$. Let $T$ be a maximal $\R$-torus of $G$
written in the form \eqref{E:r-tori}. Since the rank of $G$ is
$\ell$, we immediately obtain
$$
\dim T = \alpha + \beta + 2\gamma = \ell.
$$
Furthermore, we have $T = \mathrm{SU}(E , \sigma)$ where $E \subset
A$ is a $\tau$-invariant maximal commutative \'etale subalgebra,
$\sigma = \tau \vert_E$, and \eqref{E:dimen} holds. There are
exactly $4$ isomorphism classes of indecomposable \'etale
$\R$-algebras with involution, which are listed in
Table~\ref{R.tori}. Using this information, we can write
$$
(E , \sigma) = \R^{\delta_1} \times (\R \times \R)^{\delta_2}
\times \C^{\delta_3} \times (\C \times \C)^{\delta_4}
$$
where the involutions on factors are as in the table. Comparing this
with the structure of $T$, we obtain $\delta_2 = \alpha$, $\delta_3 = \beta$, and
$\delta_4 = \gamma$. According to Proposition \ref{P:exist}(iii),
there exists $b \in E^{\sigma}$ such that $\phi_b$ is equivalent to
$f$. But the Witt index of $f$ is $s$ (which equals the $\R$-rank of
$G$), and the Witt index of $\phi_b$ is $\geqslant \delta_2 +
2\delta_4$. Thus, $\alpha + 2\gamma \leqslant s$. (We note that
$\rk_{\R}T = \alpha + \gamma$, immediately yielding the
restriction $\alpha + \gamma \leqslant s$. So, the restriction we
have actually obtained is stronger than one can a priori expect.)

Conversely, suppose $\alpha, \beta, \gamma$ satisfy the two
constraints, and assume that $r > n - r$ (otherwise we can replace
the quadratic form $q$ defining $G$ with $-q$); in particular, $r > \ell$. Consider the \'etale
$\R$-algebra
$$
(E , \sigma) = \R \times (\R \times \R)^{\alpha} \times \C^{\beta}
\times (\C \times \C)^{\gamma} =: (E_1 , \sigma_1) \times \cdots
\times (E_4 , \sigma_4)
$$
of dimension $1+ 2\alpha + 2\beta + 4\gamma  = 2\ell + 1 = n$ where
the involutions on the factors $\R$, $\R \times \R, \ldots $ are as
described in Table \ref{R.tori}. (Clearly, $E$ satisfies
\eqref{E:dimen}.) Let us show that there exists $b = (b_1, \ldots ,
b_4) \in E^{\sigma}$ such that $\phi_b$ is equivalent to $f$. Set
$b_2 = ((1 , 1), \ldots , (1 , 1))$ and $b_4 = ((1 , 1), \ldots , (1
, 1))$. Then the quadratic form associated with the bilinear form
$(\phi_{2 , 4})_{(b_2 , b_4)}$ on $E_2 \times E_4$ is equivalent to
$(\alpha + 2\gamma)(\qform{1} \perp  \qform{-1})$.
Since $t := (n - r) - (\alpha + 2\gamma) \geqslant 0$, we can choose
$b_1 = \pm 1$ and $b_3 = (\pm 1, \ldots , \pm 1)$ so that the
quadratic form associated with $(\phi_{1 , 3})_{(b_1 , b_3)}$ is
equivalent to $(2\beta + 1 - t)\qform{ 1} \perp t \qform{-1}$. Then $b = (b_1, \ldots , b_4)$ is as required. By
Proposition \ref{P:exist}(iii), there exists an embedding $(E ,
\sigma) \hookrightarrow (A , \tau)$, and therefore an $\R$-defined
embedding $\mathrm{SU}(E , \sigma) \hookrightarrow \mathrm{SU}(A ,
\tau) = G$. Finally, it follows from our construction and Table
\ref{R.tori} that $T = \mathrm{SU}(E , \sigma)$ is a torus having
the required structure.
\end{proof}

\begin{table}[b!t]
\[
\begin{array}{cc|cc}
E&\s&\text{$\phi_b$ for $b \in E^\s$}&\SU\Es \\
\hline
\R&\Id&\qform{b}& \{ 1 \} \\
\R \times \R &\text{switch}&\qform{1,-1}& \GL_1\\
\C&\text{conjugation}&\qform{b,b}& \mathrm{R}^{(1)}_{\C/\R}(\GL_1) \\
\C \times \C&\text{switch}&\qform{1,-1} \oplus \qform{1,-1}&
\mathrm{R}_{\C/\R}(\GL_1)
\end{array}
\]
\caption{Isomorphism classes of indecomposable \'etale $\R$-algebras
with involution and their associated symmetric bilinear forms and
unitary groups.} \label{R.tori}
\end{table}

\begin{lem}[Simply connected $\tC_{\ell}$ over $\R$]  \label{C.R}
The maximal $\R$-tori in the group $G = \Sp(r , \ell - r)$ are of
the form \eqref{E:r-tori} with $\alpha = 0$, $\beta + 2\gamma =
\ell$ and $\gamma \leqslant s := \min(r , \ell - r)$.
\end{lem}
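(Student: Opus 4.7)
The plan is to follow the template of the proof of Lemma~\ref{B.R}, adjusted for the fact that when $G = \Sp(r, \ell - r)$ is non-split, the underlying algebra is $A = M_\ell(\hh)$ (rather than $M_n(K)$), and the symplectic involution $\tau = \tau_h$ corresponds to a hermitian form $h$ on $V = \hh^\ell$ of signature $(r, \ell-r)$. Every maximal $\R$-torus then has the form $T = \SU(E, \sigma)$ for a maximal commutative $\tau$-invariant \'etale subalgebra $E \subset A$ of $\R$-dimension $2\ell$ with $\dim_\R E^\sigma = \ell$, where $\sigma = \tau\vert_E$.

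For the necessity direction, I would decompose $(E, \sigma) = \R^{\delta_1} \times (\R \times \R)^{\delta_2} \times \C^{\delta_3} \times (\C \times \C)^{\delta_4}$ as in Table~\ref{R.tori}, so that $T \simeq (\GL_1)^{\delta_2} \times (\mathrm{R}^{(1)}_{\C/\R}(\GL_1))^{\delta_3} \times (\mathrm{R}_{\C/\R}(\GL_1))^{\delta_4}$ and $(\alpha, \beta, \gamma) = (\delta_2, \delta_3, \delta_4)$. The dimension identities $\dim_\R E = 2\ell$ and $\dim_\R E^\sigma = \ell$ force $\delta_1 = 0$ and $\alpha + \beta + 2\gamma = \ell$. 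Ignoring the involution, $E \simeq \R^{\delta_1 + 2\delta_2} \times \C^{\delta_3 + 2\delta_4}$ as an \'etale $\R$-algebra, and Example~\ref{GLn.eg} applied to $M_\ell(\hh)$ forces $\delta_1 + 2\delta_2 = 0$, whence $\alpha = \delta_2 = 0$. Finally, $\gamma = \alpha + \gamma = \rk_\R T \leq \rk_\R G = s$.

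For the sufficiency direction, given integers with $\alpha = 0$, $\beta + 2\gamma = \ell$, and $\gamma \leq s$, take $(E, \sigma)$ to be the product of $\beta$ copies of $(\C, \text{conjugation})$ and $\gamma$ copies of $(\C \times \C, \text{swap})$, which satisfies \eqref{E:dimen}. Decompose $V = \hh^\ell$ as an $(E, \hh)$-bimodule so that each of the $\beta$ copies of $\C$ acts on an $\hh$-line $V_k = \hh$ via a fixed embedding $\C \hookrightarrow \hh$, and each of the $\gamma$ copies of $\C \times \C$ acts on $V_k = \hh \oplus \hh$ with the two primitive idempotents projecting onto the summands; the total $\hh$-dimension is $\beta + 2\gamma = \ell$. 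On this $V$ I build a hermitian form $h'$ as an orthogonal sum: place $\qform{c_k}$ on each $V_k = \hh$, and on each $V_k = \hh \oplus \hh$ place the form for which both summands are totally isotropic and paired nondegenerately (this is forced by the swap-compatibility $h'(e^{(j)} x, y) = h'(x, e^{(3-j)} y)$, and such a form is necessarily hyperbolic of signature $(1,1)$). Then $h'$ has total signature $(p + \gamma, q + \gamma)$ with $p + q = \beta$; choosing $p = r - \gamma$ and $q = \ell - r - \gamma$, which are nonnegative precisely because $\gamma \leq s$, makes $h' \simeq h$. The corresponding embedding $(E, \sigma) \hookrightarrow (A, \tau)$ yields the required torus.

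The main obstacle is the sufficiency step: one has to carry out the $(E, \hh)$-bimodule decomposition of $V$ carefully and check that the swap-compatibility condition forces hyperbolicity on each $\C \times \C$-summand; the bound $\gamma \leq s$ then appears exactly as the requirement that the residual anisotropic signs $p, q$ be nonnegative. Note that, unlike in Lemma~\ref{B.R}, Proposition~\ref{P:exist} does not apply directly since $A$ is not a matrix algebra over its center, so hermitian forms over $\hh$ take the role of symmetric bilinear forms over $\R$ in the split argument.
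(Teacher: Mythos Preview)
Your proposal is correct and is essentially the paper's proof: the necessity argument is identical (the paper simply applies Example~\ref{GLn.eg} first, so only the two complex indecomposables appear in the decomposition), and your hermitian-form construction for sufficiency is exactly what the paper carries out explicitly via the block embedding $(z,w)\mapsto\mathrm{diag}(z,\bar w)$ of $\C\times\C$ into $(M_2(\hh),\theta)$ with $\theta(x)=J^{-1}\bar x^{\,t}J$, $J=\left(\begin{smallmatrix}0&1\\1&0\end{smallmatrix}\right)$, assembled into a block-diagonal $\hat J$ of the desired signature. The one detail your sketch leaves implicit is the conjugate $\bar w$ in the second slot---this is precisely what makes the involution restrict to the swap on all of $\C\times\C$, not merely on the idempotents.
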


\begin{proof}
Let $\tau$ be the involution on $A = M_{\ell}(\hh)$ that gives rise
to the hermitian form $f = r\qform{ 1} \perp (\ell -
r)\qform{ - 1}$, so that $G = \mathrm{SU}(A , \tau)$. Every
maximal $\R$-torus $T$ of $G$ is of the form $T = \mathrm{SU}(E ,
\sigma)$ for some $(2\ell)$-dimensional \'etale $\tau$-invariant
subalgebra $E$ of $A$, where $\sigma = \tau \vert_E$ and condition
\eqref{E:dimen} holds. As in Example \ref{GLn.eg}, $E \simeq
\C^{\ell}$ as $\R$-algebras, and therefore
$$
(E , \sigma) = \C^{\delta_1} \times (\C \times \C)^{\delta_2}
$$
where the involutions on $\C$ and $\C \times \C$ are as in Table
\ref{R.tori}. Then in \eqref{E:r-tori} for $T = \mathrm{SU}(E ,
\sigma)$ we have $\alpha = 0$, $\beta = \delta_1$ and $\gamma =
\delta_2$. By dimension count, we get $\beta + 2\gamma = \ell$.
Furthermore,
$$
\gamma = \rk_{\R} T \leqslant \rk_{\R} G = s.
$$

Conversely, suppose that $T$ has parameters $\alpha, \beta$ and
$\gamma$ satisfying our constraints. Consider
$$
(E , \sigma) = \C^{\beta} \times (\C \times \C)^{\gamma}
$$
with the involutions as above, and assume (as we may) that $\ell - r
\leqslant r$. Note that
$$
(z , w) \mapsto \begin{pmatrix} z & 0 \\ 0 & \bar{w}
\end{pmatrix}
$$
defines an embedding of algebras with involutions $\C \times \C
\hookrightarrow (M_2(\hh) , \theta)$ where $\theta(x) =
J^{-1} \bar{x}^t J$ with $J = \left( \begin{smallmatrix} 0 & 1 \\
1 & 0 \end{smallmatrix} \right)$, where $\bar{x}$ is obtained by applying
quaternionic conjugation to all entries. Consider the involution
$\hat{\theta}$ on $A$ given by $\hat{\theta}(x) = \hat{J}^{-1}
\bar{x}^t \hat{J}$ where
$$
\hat{J} = \mathrm{diag}(\underbrace{1, \ldots , 1}_{r - \gamma} \: ,
\: \underbrace{-1, \ldots , -1}_{\beta - (r -\gamma)} \: , \:
\underbrace{J, \ldots , J}_{\gamma}).
$$
Then it follows from our construction that there exists an embedding
$(E , \sigma) \hookrightarrow (A , \theta)$. Noting that $(A , \tau)
\simeq (A , \theta)$, we obtain an embedding $(E , \sigma)
\hookrightarrow (A , \tau)$. So, there exists an $\R$-embedding
$\mathrm{SU}(E , \sigma) \hookrightarrow \mathrm{SU}(A , \tau) = G$,
and it remains to observe that $T = \mathrm{SU}(E , \sigma)$ is a
torus having the required structure.
\end{proof}

Alternatively, the results of Lemmas \ref{B.R} and \ref{C.R} can be deduced from the more general
classification of maximal $\R$-tori in simple real algebraic
groups obtained in \cite{DjThang}.  For the reader's convenience we have included the direct proofs above, written in the same language as the rest of the paper.

\begin{cor}\label{C:3.4}
Let $G_1$ be an adjoint real group of type $\tB_{\ell}$, and let
$G_2$ be a simply connected real group of type $\tC_{\ell}$. 
The groups $G_1$ and $G_2$ have the same isomorphism classes of
maximal $\R$-tori if and only if $G_1$ and $G_2$ are either both
split or both anisotropic.
\end{cor}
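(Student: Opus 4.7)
The strategy is to read off the complete lists of maximal $\R$-torus isomorphism classes of $G_1$ and $G_2$ from Lemmas \ref{B.R} and \ref{C.R} and compare them term by term, using the parametrization by the triple $(\alpha, \beta, \gamma)$ of \eqref{E:r-tori}. Set $s_1 = \rk_\R G_1$ and $s_2 = \rk_\R G_2$; then $G_i$ is split iff $s_i = \ell$ and anisotropic iff $s_i = 0$. By Lemma \ref{B.R}, $G_1$ realizes exactly the triples with $\alpha + \beta + 2\gamma = \ell$ and $\alpha + 2\gamma \le s_1$. For a non-split $G_2 \simeq \Sp(r, \ell - r)$, Lemma \ref{C.R} yields $\alpha = 0$, $\beta + 2\gamma = \ell$, $\gamma \le s_2$; for the split form $G_2 \simeq \Sp_{2\ell}(\R)$, Proposition \ref{C.split} realizes every triple with $\alpha + \beta + 2\gamma = \ell$ (and $\alpha$ may be as large as $\ell$, yielding the split torus $(\GL_1)^\ell$).

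For the ``if'' direction, the both-split case is already recorded in Remark \ref{R:same} (and is immediate from the parametric comparison with $s_1 = \ell$). When both groups are anisotropic, the conditions $\alpha + 2\gamma \le 0$ and $\gamma \le 0$ force $(\alpha, \beta, \gamma) = (0, \ell, 0)$ on both sides, so each group has $(S^1)^\ell$ as its unique maximal $\R$-torus up to isomorphism.

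For the converse, I argue contrapositively by exhibiting, in each remaining situation, a torus of one group that does not occur in the other. If $G_2$ is split but $G_1$ is not, then $s_1 < \ell$, so the triple $(\ell, 0, 0)$ produces the split torus $(\GL_1)^\ell$ of $G_2$, which is forbidden in $G_1$. If $G_2$ is non-split and $G_1$ is not anisotropic, then $s_1 \ge 1$ and $G_1$ realizes $(1, \ell - 1, 0)$, a torus with a $\GL_1$-factor, whereas every maximal $\R$-torus of $G_2$ has $\alpha = 0$. The only remaining case is $G_1$ anisotropic with $G_2$ not anisotropic; if $G_2$ is split this reduces to the first case, and otherwise $G_2$ is non-split with $s_2 \ge 1$ so that the triple $(0, \ell - 2, 1)$ gives a maximal torus of $G_2$ having a factor $\mathrm{R}_{\C/\R}(\GL_1)$, while the only maximal $\R$-torus of $G_1$ is $(S^1)^\ell$. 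The main, and really only, obstacle is keeping the sub-cases organized; the real content lies entirely in the explicit parameter ranges of Lemmas \ref{B.R} and \ref{C.R}, together with the split $\tC_\ell$ case supplied by Proposition \ref{C.split}.
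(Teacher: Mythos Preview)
Your proof is correct and follows essentially the same approach as the paper: both arguments rely on the parameter lists from Lemmas \ref{B.R} and \ref{C.R} and compare them. The paper organizes the contrapositive slightly more economically (it treats ``one anisotropic, one isotropic'' as obvious, then handles only the single remaining case ``both isotropic but not split'' via the torus $(1,\ell-1,0)$), whereas you split into three sub-cases; but the decisive observation---that a non-split $G_2$ forces $\alpha=0$ while an isotropic $G_1$ admits $\alpha=1$---is identical.
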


\begin{proof}
Since every $\R$-anisotropic torus $T$ is of the form
$(\mathrm{R}^{(1)}_{\C/\R}(\GL_1))^{\dim T}$, there is nothing to prove if
both groups are anisotropic. If both groups are split, our claim
follows from Remark \ref{R:same}. Clearly, $G_1$ and $G_2$ cannot
have the same maximal tori if one of the groups is anisotropic and the
other is isotropic. So, it remains to consider the case where both
groups are isotropic but not split. Then $G_1$ contains the torus
with $\alpha = 1$, $\beta = \ell - 1$, and $\gamma = 0$
by Lemma \ref{B.R}, but $G_2$ does not by Lemma \ref{C.R}.
\end{proof}

\begin{rmk}
Our argument shows that if $G_1$ is isotropic and $G_2$ is not split
then $G_1$ has a maximal $\R$-torus that is not isomorphic to any
$\R$-torus of $G_2$. Moreover, by Lemma \ref{B.R}, a maximal
$\R$-torus $T_1$ of $G_1$ that contains a maximal $\R$-split torus
has parameters $\alpha = s$,  $\beta = \ell - s$ and $\gamma = 0$,
hence does not allow an $\R$-embedding into $G_2$. In particular, if
$G_1 = \mathrm{SO}(n - 1 , 1)$ and $G_2$ is not split then every
isotropic maximal $\R$-torus of $G_1$ is not isomorphic to a
subtorus of $G_2$.
\end{rmk}

\begin{eg}[Absolute rank 3]\label{R.eg}
As an empirical illustration of the landscape over $\R$, we divide
the 14 real groups of types $\tB_3$ and $\tC_3$ into equivalence
classes under the relation ``have isomorphic collections of maximal
tori''. For forms of $\SO_7$ or $\Sp_6$, the maximal tori are
described by Lemmas \ref{B.R} and \ref{C.R}. Also, the four anisotropic
(compact) forms obviously make up one equivalence class. For the
other groups one can use a computer program such as the Atlas
software \cite{AdamsFokko} to find the maximal tori.  In summary, the
groups $\SO(1,6)$, $\SO(2,5)$, and $\Spin(2,5)$ are each their own
equivalence class, and we find the following non-singleton
equivalence classes:
\begin{gather*}
\{ \text{4 anisotropic forms} \}, \quad \{ \Sp_6, \SO(4,3) \}, \quad \{ \PSp_6, \Spin(4,3) \},  \\
 \text{and} \quad \{ \Sp(1,2), \PSp(1,2), \Spin(1,6) \}.
\end{gather*}
In particular, $\Spin(1,6)$ and $\PSp(1,2)$ have the same
isomorphism classes of maximal tori and yet are neither both split
nor both anisotropic.  This situation is dual to the one
considered and eliminated in Corollary \ref{C:3.4} (adjoint
$\tB_{\ell}$ and simply connected $\tC_{\ell}$).
\end{eg}

For completeness, we mention the (much easier) analogue of Corollary \ref{C:3.4} for non-archimedean local fields. 

\begin{lem} \label{nonarch}
Let $G_1$ and $G_2$ be absolutely almost simple groups of type $\tB_\ell$ and $\tC_\ell$ respectively, with $\ell \ge 3$, over $K$ a nonarchimedean local field of characteristic $\ne 2$.  The following are equivalent:
\begin{enumerate}
\item The groups $G_1$ and $G_2$ have the same isogeny classes of maximal $K$-tori.
\item $\rk_K G_1 = \rk_K G_2$.
\item $G_1$ and $G_2$ are split.
\end{enumerate}
\end{lem}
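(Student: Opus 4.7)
The plan is to establish the cyclic chain $(3) \Rightarrow (1) \Rightarrow (2) \Rightarrow (3)$. The implication $(3) \Rightarrow (1)$ is exactly Remark~\ref{R:same}; $(1) \Rightarrow (2)$ is formal, since $K$-isogenous tori have equal $K$-ranks and each $G_i$ admits a maximal $K$-torus containing a maximal $K$-split torus, hence of $K$-rank equal to $\rk_K G_i$; and $(3) \Rightarrow (2)$ is trivial because both split groups have $K$-rank $\ell$.

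The bulk of the work is $(2) \Rightarrow (3)$, which I would attack by enumerating the $K$-ranks realized by forms of types $\tB_\ell$ and $\tC_\ell$ over the nonarchimedean local field $K$. For type $\tB_\ell$, the group $G_1$ is isogenous to $\SO(q)$ for a nondegenerate quadratic form $q$ of dimension $2\ell + 1$ over $K$. Since the $u$-invariant of $K$ equals $4$ and $\dim q$ is odd, the anisotropic kernel of $q$ has dimension $1$ or $3$, so $\rk_K G_1 \in \{\ell, \ell - 1\}$, with value $\ell$ exactly when $G_1$ is split. For type $\tC_\ell$, $G_2$ is isogenous to $\mathrm{SU}(A, \tau)$ where either $A = M_{2\ell}(K)$ (the split case, with $\rk_K G_2 = \ell$) or $A = M_\ell(D)$ for $D$ the unique quaternion division algebra over $K$, in which case $\tau$ corresponds to a rank-$\ell$ hermitian form $h$ over $D$ (with respect to the standard involution), and $\rk_K G_2$ is the Witt index of $h$.

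The main computation, and in my view the only real obstacle, is to show that in this non-split $\tC_\ell$ case $\rk_K G_2 = \lfloor \ell/2 \rfloor$. The key input is that the reduced norm $N \colon D \to K$ is surjective, because the norm form of $D$ is an anisotropic $4$-dimensional quadratic form over $K$ and hence universal by the local classification of quadratic forms. Given this, any diagonal hermitian form $\langle a_1, \ldots, a_\ell \rangle$ with $a_i \in K^\times$ and $\ell \ge 2$ admits an isotropic vector (pick $x_1, x_2 \in D$ with $N(x_1) = a_2$, $N(x_2) = -a_1$, and remaining coordinates zero), and splitting off hyperbolic planes inductively yields Witt index $\lfloor \ell/2 \rfloor$.

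Finally, combining the two classifications, $(\rk_K G_1, \rk_K G_2)$ lies in $\{\ell, \ell - 1\} \times \{\ell, \lfloor \ell/2 \rfloor\}$, and $\ell \ge 3$ forces $\lfloor \ell/2 \rfloor < \ell - 1 < \ell$. Hence $\rk_K G_1 = \rk_K G_2$ is possible only when both ranks equal $\ell$, i.e., both groups are split, completing the plan.
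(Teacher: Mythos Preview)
Your proposal is correct and follows essentially the same route as the paper's proof: the paper also argues $(1)\Rightarrow(2)\Rightarrow(3)\Rightarrow(1)$, invoking Propositions~\ref{C.split} and~\ref{B.split} (equivalently, Remark~\ref{R:same}) for $(3)\Rightarrow(1)$, and for $(2)\Rightarrow(3)$ uses exactly the rank comparison $[\ell/2]=\rk_K G_2=\rk_K G_1\ge \ell-1$, which is impossible for $\ell\ge 3$. The only difference is that the paper simply asserts $\rk_K G_2=[\ell/2]$ in the non-split case, whereas you supply the justification via universality of the reduced norm; your added detail is correct and welcome.
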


\begin{proof}
(1) obviously implies (2).  Suppose (2) and that $G_2$ is not split.  Then 
\[
[\ell/2] = \rk_K G_2 = \rk_K G_1 \ge \ell -1,
\]
but this is impossible because $\ell \ge 3$, hence (3).

To prove (3) $\Rightarrow$ (1), we may assume that $G_1$ is split adjoint and $G_2$ is split simply connected.  
Combining Propositions \ref{C.split} and \ref{B.split} with \eqref{E:decomp} gives that $G_1$ and $G_2$ have the same isogeny classes of maximal tori.
\end{proof}

\section{Local-global principles for embedding \'etale algebras with involution} \label{LG}

The last ingredient we need to develop before proving Theorem
\ref{BC} in \S\ref{S:BC} is a result guaranteeing in our
situation the validity of the local-global principle for the
existence of an embedding of an \'etale algebra with involution into
a simple algebra with involution. This issue was analyzed in
\cite{PrRap:Deven}: although the local-global principle may fail
(cf.~Example 7.5 in \emph{loc.\ cit.}), it can be shown to hold under
rather general conditions. For our purposes we need the following
case.

Let $(E , \sigma)$ be an \'etale algebra with involution over a
number field $K$ of dimension $n = 2m$ and satisfying
\eqref{E:dimen}. Then $E = F[x]/(x^2 - d)$ where $F = E^{\sigma}$ is
an $m$-dimensional \'etale $K$-algebra and $d \in F^{\times}$, with
the involution defined by $x \mapsto -x$ as in Example \ref{quad.etale}.  We write $F = \prod_{j = 1}^r F_j$ where $F_j$
is a field extension of $K$, and suppose that in terms of this
decomposition $d = (d_1, \ldots , d_r)$. Let $\tau$ be an orthogonal
involution on $A = M_n(K)$.
\begin{prop}\label{P:local-global}
{\rm \cite[Theorem 7.3]{PrRap:Deven}} Assume that for every $v \in
V^K$ there exists a $K_v$-embedding
$$
\iota_v \colon (E \otimes_K K_v , \sigma \otimes \mathrm{id}_{K_v})
\hookrightarrow (A \otimes_K K_v , \tau \otimes \mathrm{id}_{K_v}).
$$
If the following condition holds
\begin{speceqn}{$\diamond$}
\begin{equation}
 \parbox[t]{4in}{for every finite subset $V
\subset V^K$, there exists $v_0 \in V^K \setminus V$ such that for
$j = 1, \ldots , r$, if $d_j \notin {F^{\times}_j}^2$, then $d_j
\notin {(F_j \otimes_K K_{v_0})^{\times 2}}$;}
\end{equation}
\end{speceqn}
then there exists an embedding $\iota \colon (E , \sigma)
\hookrightarrow (A , \tau)$. Furthermore, $(\diamond)$ automatically
holds if $F$ is a field.
\end{prop}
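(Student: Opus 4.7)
The plan is to translate the embedding problem into a question about equivalence of symmetric bilinear forms over the number field $K$, and then apply Hasse--Minkowski together with a careful adjustment of Hasse--Witt invariants at a finite set of places using $(\diamond)$.

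By Proposition \ref{P:exist}(iii), an embedding $(E,\sigma) \hookrightarrow (A, \tau_f)$ exists if and only if there is an invertible $b \in E^{\sigma} = F^{\times}$ such that $\phi_b$ is equivalent to $f$ over $K$. Using Example \ref{quad.etale} and the decomposition $F = \prod_{j=1}^r F_j$, one gets
$$
\phi_b \;\cong\; \bigoplus_{j=1}^r \mathrm{Tr}_{F_j/K}\,\qform{2b_j,\,-2b_j d_j}
$$
for $b = (b_j) \in \prod_j F_j^{\times}$. The determinant $\det \phi_b = d(E,\sigma) \in K^{\times}/K^{\times 2}$ is independent of $b$ (as noted after Proposition \ref{P:exist}), and the hypothesis that local embeddings exist everywhere forces $d(E,\sigma) = d(f)$. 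Since dimensions and determinants already match, Hasse--Minkowski reduces the global equivalence $\phi_b \sim f$ to matching Hasse--Witt invariants at every place and signatures at every real place.

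At each archimedean place the hypothesis furnishes a local choice $b^{(v)}$ with the correct signature, and weak approximation allows one to pick a single global $b^{(0)}$ producing the required signature profile. The principal obstacle is then simultaneously matching the Hasse--Witt invariant at every non-archimedean place. The key point is that modifying $b_j$ by an element of $F_j^{\times}$ changes the Hasse invariant of $\mathrm{Tr}_{F_j/K}\qform{2b_j,-2b_jd_j}$ at $v$ by a controlled cup product involving the Hilbert symbol with $d_j$, and this correction is trivial whenever $d_j$ becomes a square in $F_j \otimes_K K_v$. Thus one needs auxiliary places at which $d_j$ remains a nonsquare in order to have leverage to adjust invariants. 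Condition $(\diamond)$ supplies exactly such a place $v_0$ outside any prescribed finite ``bad'' set $V$, which when combined with weak approximation on $\prod_{v \in V} F^{\times}_v$ allows one to arrange the correct local Hasse invariant at each $v \in V$; the product formula for Hilbert symbols then makes the invariant at the one remaining place automatically correct.

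Finally, when $F$ is a field, $(\diamond)$ follows from Chebotarev density applied to the nontrivial quadratic extension $F(\sqrt{d})/F$ (for $d \notin F^{\times 2}$), which produces infinitely many places $v_0$ of $K$ at which $d$ remains a nonsquare in $F \otimes_K K_{v_0}$. The most delicate step is the Hasse-invariant adjustment in the multi-field case, where one must be careful that changing $b_{j_1}$ does not inadvertently spoil the invariants contributed by the other factors $j \neq j_1$; it is here that $(\diamond)$ must be applied uniformly across all indices $j$, and this uniform applicability is what the condition is engineered to deliver.
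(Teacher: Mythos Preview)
The paper does not prove this proposition at all: it is simply quoted as \cite[Theorem 7.3]{PrRap:Deven}, so there is no argument in this paper to compare your proposal against. Your sketch is a reasonable high-level outline of the strategy one would expect in the cited reference---reduce via Proposition~\ref{P:exist}(iii) to finding $b \in F^\times$ with $\phi_b \cong f$, use Hasse--Minkowski, and exploit $(\diamond)$ together with Hilbert reciprocity to adjust local Hasse invariants---but as written it is only a plan, not a proof.

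A few points would need to be made precise if you were to turn this into an actual argument. First, the claim that modifying $b_j$ changes the Hasse--Witt invariant of $\mathrm{Tr}_{F_j/K}\qform{2b_j,-2b_jd_j}$ at $v$ ``by a controlled cup product involving the Hilbert symbol with $d_j$'' is the crux, and it requires an honest computation with the corestriction map (the transfer of a binary form over $F_j$ has its Hasse invariant over $K_v$ governed by $\mathrm{cor}_{F_j/K}$ applied to the quaternion class $(b_j,d_j)_{F_j}$); you should check that multiplying $b_j$ by $u_j$ changes this by $\mathrm{cor}_{F_j/K}(u_j,d_j)$, and that this correction can be made to hit any prescribed sign at $v$ precisely when $d_j$ is a nonsquare in some component of $F_j \otimes_K K_v$. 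Second, the ``one remaining place'' language is too loose: the finite set $V$ of bad places where adjustment is needed may involve several factors $j$ simultaneously, and the reciprocity argument has to be organized so that corrections at the auxiliary place $v_0$ (supplied by $(\diamond)$) can be made \emph{for each $j$ separately} without interfering with the others---your last paragraph gestures at this but does not carry it out. Finally, for the ``$F$ a field'' addendum, note that Chebotarev is applied to the Galois closure over $K$, not to $F(\sqrt{d})/F$ directly, since one needs places of $K$ (compare the argument in Lemma~\ref{LG.lem}, which implements exactly this idea).
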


We will now derive from the proposition the following statement, in
which $n$ can be odd or even.
\begin{lem}\label{LG.lem}
Let $K$ be a number field, let $(E , \sigma)$ be an $n$-dimensional
\'etale algebra with involution satisfying \eqref{E:dimen}, and let
$\tau$ be an orthogonal involution on $A = M_n(K)$. Assume that for
every $v \in V^K$ there is an embedding
$$
\iota_v \colon (E \otimes_K K_v , \sigma \otimes \mathrm{id}_{K_v})
\hookrightarrow (A \otimes_K K_v , \tau \otimes \mathrm{id}_{K_v}).
$$
Then in each of the following situations
\begin{enumerate}
\item \label{LG.deg5} $n \leqslant 5$, or
\item \label{LG.real} there is a real $v \in V^K$ such that $(E
\otimes_K K_v , \sigma \otimes \mathrm{id}_{K_v})$ is isomorphic to
$(\C , \bar{\ })^m$ or $(\C , \bar{\ })^m \times (\R ,
\mathrm{id}_{\R})$ depending on whether $n = 2m$ or $n = 2m + 1$,
\end{enumerate}
there exists an embedding $\iota \colon (E , \sigma)
\hookrightarrow (A , \tau)$.
\end{lem}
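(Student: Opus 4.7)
The plan is to apply Proposition \ref{P:local-global}, verifying the hypothesis $(\diamond)$ in each case, after first reducing the odd-dimensional case to the even-dimensional one via the canonical decomposition \eqref{E:decomp}. For $n = 2m+1$, write $(E, \sigma) = (E', \sigma') \times (K, \mathrm{id})$ with $E'$ a $2m$-dimensional $\sigma$-invariant subalgebra satisfying \eqref{E:dimen}. Any $b = (b', b_0) \in E'^{\sigma'} \times K^\times$ with $b'$ invertible gives $\phi_b = \phi_{b'} \perp \qform{b_0}$, so by Proposition \ref{P:exist}(iii), constructing an embedding $(E, \sigma) \hookrightarrow (A, \tau)$ is equivalent to finding $b_0 \in K^\times$ represented by $f$ and, for the resulting decomposition $f = \qform{b_0} \perp f'$, an embedding $(E', \sigma') \hookrightarrow (M_{n-1}(K), \tau_{f'})$. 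The local embeddings $\iota_v$ supply square classes $b_{0,v} \in K_v^\times / K_v^{\times 2}$; by weak approximation on $K^\times/K^{\times 2}$ (combined with Hasse--Minkowski to ensure global representability by $f$), we select a single $b_0 \in K^\times$ matching the $b_{0,v}$ at the finitely many places where the residual problem depends nontrivially on the square class. Witt cancellation then produces the local embeddings of $(E', \sigma')$ needed to invoke the even case, and case (2) descends to case (2) for $(E', \sigma')$ at the same real place $v$. Thus we may assume $n = 2m$ throughout.

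In case (1) with $n$ even, $n \in \{2, 4\}$, so $F = E^\sigma$ has dimension at most $2$ and is either a field (whereby $(\diamond)$ is automatic by the last sentence of Proposition \ref{P:local-global}) or $F \cong K \times K$. In the latter case, write $d = (d_1, d_2)$ and set $J = \{j : d_j \notin K^{\times 2}\}$. Chebotarev's density theorem applied to the (at most biquadratic) extension $K\bigl(\sqrt{d_j} : j \in J\bigr)/K$ produces, outside any finite $V$, infinitely many places $v_0$ whose Frobenius acts as $-1$ on every $\sqrt{d_j}$ with $j \in J$; at such $v_0$, every such $d_j$ remains a non-square in $K_{v_0}$, verifying $(\diamond)$.

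In case (2) with $n = 2m$, the hypothesis $(E, \sigma) \otimes K_v \cong (\C, \bar{\ })^m$ forces $F$ to be totally real at $v$ and each $d_j$ to be totally negative at every real place of $F_j$ above $v$; consequently, for each $j \in J = \{j : d_j \notin F_j^{\times 2}\}$, the quadratic extension $F_j(\sqrt{d_j})/F_j$ has no real place above $v$. Let $M/K$ be the Galois closure of $\prod_{j \in J} F_j(\sqrt{d_j})$, and let $c_v \in \Gal(M/K)$ denote (the conjugacy class of) complex conjugation determined by $v$; by construction $c_v$ acts nontrivially on every $\sqrt{d_j}$ with $j \in J$. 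Chebotarev yields infinitely many $v_0 \notin V$ whose Frobenius class coincides with that of $c_v$; at each such $v_0$, each $d_j$ with $j \in J$ is a non-square in $F_j \otimes K_{v_0}$, verifying $(\diamond)$ and allowing Proposition \ref{P:local-global} to deliver the embedding. The main obstacle will be the odd-to-even reduction, where one must justify that the residual embedding problem for $(E', \sigma')$ is insensitive to the square class of $b_0$ outside a finite set of places; this rests on the fact that at almost all $v$, both $(E, \sigma) \otimes K_v$ and $(A, \tau) \otimes K_v$ are unramified and the reduced embedding is solvable for any admissible $b_0$.
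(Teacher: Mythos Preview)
Your strategy---reduce odd $n$ to even $n$, then verify condition $(\diamond)$ via Chebotarev---is exactly the paper's, and your treatment of the even cases (both $F = K\times K$ in low dimension and the complex-conjugation argument in case~(2)) matches the paper's proof essentially verbatim.

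The one genuine gap is in your odd-to-even reduction. You claim that at almost all places the residual embedding of $(E',\sigma')$ into $(M_{n-1}(K_v),\tau_{f'})$ is ``solvable for any admissible $b_0$''. This is false: an embedding $(E',\sigma')\hookrightarrow (M_{2m}(K_v),\tau_{f'})$ forces $d(f') = d(E',\sigma')$ in $K_v^\times/K_v^{\times 2}$ (see the discussion preceding Proposition~\ref{D.qsplit}), i.e.\ $b_0 \equiv d(f)\cdot d(E',\sigma')^{-1}$ locally. Since this square class is typically nontrivial at infinitely many places, weak approximation at finitely many places cannot guarantee it elsewhere. The fix is simpler than what you wrote: the local data already force $b_{0,v} \equiv d(f)\cdot d(E',\sigma')^{-1}$ at \emph{every} $v$, so just take $b_0\in K^\times$ in this global square class. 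Hasse--Minkowski gives representability by $f$, and Witt cancellation over each $K_v$ then yields the local embeddings of $(E',\sigma')$ into $(M_{n-1}(K_v),\tau_{f'})$ for \emph{all} $v$---no approximation needed. The paper sidesteps this entirely by citing \cite[Prop.~7.2]{PrRap:Deven}, which packages exactly this reduction.
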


\begin{proof}
First, we will reduce the argument to the case of even $n$, i.e.
when $E$ satisfies one of the following conditions:
\begin{enumerate}
\renewcommand{\theenumi}{\arabic{enumi}$'$}
\item \label{1p} $n = 2$ or $4$, or

\item \label{2p} there is a real $v \in V^K$ such that
$(E \otimes_K K_v , \sigma \otimes \mathrm{id}_{K_v})$ is isomorphic
to $(\C , \bar{\ })^m$.
\end{enumerate}
Indeed, let $n = 2m + 1$ and suppose $E$ satisfies condition (1)
or (2) of the lemma. Then by \cite[Prop. 7.2]{PrRap:Deven}, $(E ,
\sigma) = (E' , \sigma') \times (K , \mathrm{id}_K)$ and there
exists an orthogonal involution $\tau'$ on $A' = M_{n-1}(K)$ such
that for every $v \in V^K$ there is an embedding
$$
\iota'_v \colon (E' \otimes_K K_v ,  \sigma' \otimes
\mathrm{id}_{K_v}) \hookrightarrow (A' \otimes_K K_v , \tau' \otimes
\mathrm{id}_{K_v}),
$$
and the existence of an embedding $\iota' \colon (E' , \sigma')
\hookrightarrow (A' , \tau')$ is equivalent to the existence of an
embedding $\iota \colon (E , \sigma) \hookrightarrow (A , \tau)$.
Clearly, $E'$ satisfies the respective condition $\eqref{1p}$ or $\eqref{2p}$.
So, if we assume that the lemma has already been established for
$E'$, then the existence of $\iota$ follows.

Now, suppose that $\dim_K E = 2m$ and $E$ satisfies \eqref{E:dimen}.
Write $E = F[x]/(x^2 - d)$ where $F = E^{\sigma} = \prod_{j = 1}^r
F_j$, $d = (d_1, \ldots , d_r)$ with $d_j \in F^{\times}_j$. Assume
that there exist  $K$-embeddings $\varphi_j \colon F_j
\hookrightarrow \bar{K}$ such that if 
\[
M = \varphi_1(F_1) \cdots
\varphi_r(F_r) \quad \text{and} \quad  N = M(\sqrt{\varphi_1(d_1)},
\ldots , \sqrt{\varphi_r(d_r)})
\]
then there is $\lambda \in
\mathrm{Gal}(N/M)$  with the property
\begin{equation}\label{E:Frob}
\lambda\left( \sqrt{\varphi_j(d_j)} \right) = -\sqrt{\varphi_j(d_j)}
\quad \text{whenever $d_j \notin F^{\times}_j$ for $j = 1, \ldots , r$.}
\end{equation}
Let $P$ be the normal closure of $N$ over $K$, and let $\mu \in
\mathrm{Gal}(P/K)$ be such that $\mu \vert_N = \lambda$. By
Chebotarev's Density Theorem \cite[Ch.~7, 2.4]{CasselsFroh}, for any finite $V
\subset V^K$, there exists a nonarchimedean $v_0 \in V^K \setminus
V$ that is unramified in $P$ and for which the Frobenius
automorphism $\mathrm{Fr}(w_0 | {v_0})$ is $\mu$ for a suitable
extension $w_0 \vert v_0$. Then it follows from \eqref{E:Frob} that
$d_j \notin {({F_j}_{w_0})^{\times 2}}$ for any $j$ such that $d_j
\notin {F^{\times 2}_j}$, and therefore condition $(\diamond)$
holds.

Let now $(E , \sigma)$ be an \'etale algebra with involution
satisfying \eqref{1p} or \eqref{2p} for which embeddings $\iota_v$ exist for
all $v \in V^K$. In order to derive the existence of $\iota$ from
Proposition \ref{P:local-global}, we need to check $(\diamond)$, for
which it is enough to find an automorphism $\lambda$ as in the
previous paragraph. Suppose that \eqref{1p} hods. Then $F = E^{\sigma}$
has dimension 1 or 2 respectively. Since we don't need to consider
the case where $F$ is a field (cf.\ Proposition
\ref{P:local-global}), the only remaining case is where $F = K
\times K$. Clearly, $K(\sqrt{d_1} , \sqrt{d_2})$ always has an
automorphism $\lambda$ such that $\lambda(\sqrt{d_j}) = -
\sqrt{d_j}$ if $d_j \notin {K^{\times 2}}$, as required. Finally,
suppose that \eqref{2p} holds. Then $F \otimes_K K_v \simeq \R^m$, and
$d = (\delta_1, \ldots , \delta_m)$ in $\R^m$ with $\delta_i < 0$
for all $i$. Then for any embeddings $\varphi_j \colon F_j
\hookrightarrow \C$ we have $\varphi_j(F_j) \subset \R$ and the
restriction $\lambda$ of complex conjugation satisfies
$\lambda(\sqrt{d_j}) = - \sqrt{d_j}$ for all $j$, concluding the
argument.
\end{proof}

\begin{rmk*}
Example 7.5 in \cite{PrRap:Deven} shows that there exists $(E ,
\sigma)$ with $E$ of dimension 6 for which the local-global
principle for embeddings fails, so in terms of dimension the
condition  $(1)$ in Lemma \ref{LG.lem}  is sharp.
\end{rmk*}

For convenience of further reference, we will also quote the
local-global principle for embeddings in the case of symplectic
involutions.
\begin{lem}\label{L:sympl}
{\rm \cite[Th.~5.1]{PrRap:Deven}} Let $A$ be a central
simple $K$-algebra of dimension $n^2$ with a symplectic involution
$\tau$ (then, of course, $n$ is necessarily even), and let $(E ,
\sigma)$ be an $n$-dimensional \'etale $K$-algbra with involution
satisfying \eqref{E:dimen}. If for every $v \in V^K$ there exists an
embedding
$$
\iota_v \colon (E \otimes_K K_v , \sigma \otimes \mathrm{id}_{K_v})
\hookrightarrow (A \otimes_K K_v , \tau \otimes \mathrm{id}_{K_v}),
$$
then there exists an embedding $(E , \sigma) \hookrightarrow (A ,
\tau)$.
\end{lem}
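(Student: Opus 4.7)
The natural approach is to recast the embedding as a comparison of hermitian forms over the centralizing division algebra of $A$, and then apply the Hasse principle for such forms.

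First I would use the structure theory of algebras with symplectic involution: since $\tau$ is of the first kind of symplectic type, we may write $A \simeq M_r(D)$ where $D$ is either $K$ or a quaternion division algebra over $K$ equipped with its canonical (symplectic) involution $\gamma$, and $\tau$ is the adjoint involution $\tau_h$ of some nondegenerate $\gamma$-sesquilinear form $h$ on $D^r$ of the appropriate symmetry. An analogue of Proposition \ref{P:exist} in this $D$-valued setting translates the problem: a $K$-embedding $(E, \sigma) \hookrightarrow (A, \tau)$ exists precisely when one can find an invertible $b \in E^\times$ of the appropriate symmetry type (relative to $\sigma$) so that the transferred form $\phi_b$ on $E$, viewed as a free $D$-module via the regular representation of $E \otimes_K D$, is equivalent to $h$.

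Second, the local hypotheses provide, for each $v \in V^K$, invertible $b_v \in (E \otimes_K K_v)^\times$ such that $\phi_{b_v} \simeq h \otimes_K K_v$. The task reduces to producing a single global $b \in E^\times$ whose transferred form $\phi_b$ is equivalent to $h$. Here one invokes the Hasse principle for (skew-)hermitian forms over a quaternion algebra with its canonical involution, which asserts that two such forms on a free $D$-module are globally isometric iff they are locally isometric at every place of $K$. Once the existence of a globally equivalent $\phi_b$ is known, a weak approximation argument on the smooth affine variety of invertible $\sigma$-symmetric (or $\sigma$-antisymmetric) elements of $E$ delivers a concrete global $b$ doing the job.

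The crucial technical point---and the reason no Chebotarev-type hypothesis analogous to $(\diamond)$ in Proposition \ref{P:local-global} is needed here---is that symplectic-type forms carry no discriminant invariant obstructing a genus-to-class argument. Equivalently, the cohomological obstruction to assembling local equivalences into a global one lives in an $H^1$ of a group of symplectic type, which over a number field satisfies Kneser's theorem (injectivity into the product over archimedean places), and for non-compact symplectic groups the relevant local $H^1$ already vanishes. Thus the global-local gap that had to be bridged in the orthogonal case simply does not open up for symplectic involutions, and this is the main structural reason the statement is cleaner than Lemma \ref{LG.lem}.
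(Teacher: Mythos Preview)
The paper does not prove this lemma at all: it is simply quoted as \cite[Th.~5.1]{PrRap:Deven}, with no argument given. So there is no ``paper's own proof'' to compare against; your proposal should be evaluated on its own merits and against what one expects the argument in \cite{PrRap:Deven} to look like.

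Your outline is the standard one and is essentially correct. Writing $A\simeq M_r(D)$ with $(D,\gamma)$ either $(K,\mathrm{id})$ or a quaternion division algebra with its canonical involution, and $\tau=\tau_h$ for a $\gamma$-hermitian form $h$ on $D^r$, the embedding problem does reduce to realizing $h$ as a transfer form $\phi_b$, and the Landherr/Kneser Hasse principle for such hermitian forms is exactly the right global input. Your remark that no condition like $(\diamond)$ is needed because the symplectic side carries no discriminant obstruction is also the right conceptual explanation.

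Two places where the sketch is thinner than it should be. First, before you can speak of $D^r$ as an $E$-module and of $\phi_b$ as a $D$-valued form, you need to know that $E$ embeds in $M_r(D)$ as a $K$-algebra (forgetting the involution); this is itself a local--global statement about splitting behaviour of $D$ over the field factors of $E$, and it is guaranteed by the local embedding hypotheses together with the Albert--Brauer--Hasse--Noether theorem. Second, your ``weak approximation'' step is doing real work: you must produce a single global $b$ whose transfer $\phi_b$ is \emph{everywhere} locally equivalent to $h$, not merely at the places where you approximated. The point is that at almost all $v$ (those where $D_v$ splits, and at nonarchimedean $v$ where $D_v$ is division) the local class of $\phi_b$ is independent of $b$, so only finitely many real signatures need to be hit; weak approximation in $(E^{\sigma})^{\times}$ then suffices. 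Spelling this out would make the argument complete.
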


\section{Function field analogue of Theorem \ref{BC}} \label{ffield}

We recall the following immediate consequence of the
rationality of the variety of maximal tori (see \cite{Hrdr:remark} or \cite[Cor.~7.3]{PlatRap}) which will be used repeatedly: \emph{Let $G$ be a
reductive algebraic group over a number field $K$; then given any $v
\in V^K$ and any maximal $K_v$-torus $T^{(v)}$ of $G$ there exists a
maximal $K$-torus $T$ of $G$ that is conjugate to $T^{(v)}$ by an
element of $G(K_v)$.} In particular, for any $v \in V^K$ there
exists a maximal $K$-torus $T$ of $G$ such that $\rk_{K_v} 
T = \rk_{K_v}  G$. It follows that if $G_1$ and $G_2$ are
reductive $K$-groups having the same isogeny classes of maximal
$K$-tori then
\begin{equation}\label{E:equal-rank}
\rk_{K_v}  G_1 = \rk_{K_v}  G_2
\quad \text{for all $v \in V^K$.}
\end{equation}

The remark made in the previous paragraph remains valid for global function fields, which can be used to give the following analogue of Theorem \ref{BC}:
Suppose $G_1$ and $G_2$ are 
absolutely almost simple algebraic groups of types $\tB_{\ell}$ and
$\tC_{\ell}$ $(\ell \geqslant 3)$ over a global field $K$ of characteristic $> 2$.  \emph{The groups $G_1$ and $G_2$ have the
same isogeny classes of maximal $K$-tori if and only if they are split.}  Indeed, if the two groups have the same isogeny classes of maximal $K$-tori, then both groups are $K_v$-split for every $v$ (by \eqref{E:equal-rank} and Lemma \ref{nonarch}), hence  both groups are $K$-split (by the Hasse Principle).  The converse holds by Remark \ref{R:same}.

\section{Proof of Theorem \ref{BC}}\label{S:BC}

Throughout this section $G_1$ and $G_2$ will denote absolutely
almost simple algebraic groups of types $\tB_{\ell}$ and
$\tC_{\ell}$ for some $\ell \geqslant 3$ defined over a number field
$K$. 
In \ref{twin.def} we defined what it means for $G_1$
and $G_2$ to be \emph{twins}. We now observe that since $G_1$ and
$G_2$ cannot be $K_v$-anisotropic for $v \in V^K_f$, they are twins if
and only if both of the following conditions hold:
\begin{eqnarray}
\text{$\rk_{K_v} G_1 = \rk_{K_v} G_2 = \ell$ for
all $v \in V^K_f$} \label{I} \\
\text{$\rk_{K_v} G_1 = \rk_{K_v} G_2 = 0$ or
$\ell$ for all $v \in V^K_{\infty}$.} \label{II}
\end{eqnarray}
We also note that if $G_1$ and $G_2$ are twins over $K$
then they remain twins over any finite extension $L/K$.  If $K$ has $r$ real places, then (by the Hasse Principle) there are exactly $4 \cdot 2^r$ pairs of $K$-groups $G_1$, $G_2$ that are twins, equivalently, $2^r$ pairs if one only counts the groups $G_1$ and $G_2$ up to isogeny.

\medskip

Now, let $G_1$ and $G_2$ be as above, with $G_1$ adjoint and $G_2$
simply connected. Then $G_i = \mathrm{SU}(A_i , \tau_i)$ for $i = 1,
2$ where $A_1 = M_{n_1}(K)$, $n_1 = 2\ell + 1$ and the involution
$\tau_1$ is orthogonal, and $A_2$ is a central simple $K$-algebra of
dimension $n^2_2$ with $n_2 = 2\ell$ and the involution $\tau_2$ is
symplectic. Any maximal $K$-torus $T_i$ of $G_i$ is of the form
$\mathrm{SU}(E_i , \sigma_i)$ where $E_i \subset A_i$ is an
$n_i$-dimensional \'etale $\tau_i$-invariant $K$-subalgebra and
$\sigma_i = \tau_i \vert_{E_i}$ so that \eqref{E:dimen} holds. For $i
= 1$, we can always write $(E_1 , \sigma_1) = (E'_1 , \sigma'_1)
\times (K , \mathrm{id}_K)$. For $i = 2$, we set $(E^+_2 ,
\sigma^+_2) = (E_2 , \sigma_2) \times (K , \mathrm{id}_K)$.
\begin{prop}\label{P:same-subalg}
Let $(A_1 , \tau_1)$ and $(A_2 , \tau_2)$ be algebras with
involution as above, and assume that $G_1 = \mathrm{SU}(A_1 ,
\tau_1)$ and $G_2 = \mathrm{SU}(A_2 , \tau_2)$ are twins. If $(E_1 ,
\sigma_1)$ is isomorphic to an $n_1$-dimensional \'etale subalgebra
of $(A_1 , \tau_1)$ satisfying \eqref{E:dimen} then $(E'_1 ,
\sigma'_1)$ is isomorphic to a subalgebra of $(A_2 , \tau_2)$.
Conversely, if $(E_2 , \sigma_2)$ is isomorphic to an
$n_2$-dimensional \'etale subalgebra of $(A_2 , \tau_2)$ satisfying
\eqref{E:dimen} then $(E^+_2 , \sigma^+_2)$ is isomorphic to a
subalgebra of $(A_1 , \tau_1)$. Thus, the correspondences 
\[
(E_1 ,
\sigma_1) \mapsto (E'_1 , \sigma'_1) \quad \text{and} \quad (E_2 ,
\sigma_2) \mapsto (E^+_2 , \sigma^+_2)
\]
 implement mutually inverse
bijections between the sets of isomorphism classes of $n_1$- and
$n_2$-dimensional \'etale subalgebras of
$(A_1 , \tau_1)$ and $(A_2 , \tau_2)$ that are invariant under the respective involutions and satisfy
\eqref{E:dimen}.
\end{prop}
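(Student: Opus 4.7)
The plan is to reduce both embedding assertions to the local--global principles of Section~\ref{LG}, after separating out an ``everywhere split'' case that admits a direct global treatment. I would split the proof according to whether $K$ admits a real place $v_0$ at which $G_1$ (equivalently, by the twin assumption, $G_2$) is anisotropic.

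Suppose no such $v_0$ exists. Then by \eqref{I}, \eqref{II} and the Hasse principle, both $G_1$ and $G_2$ are split over $K$; in particular $A_1 = M_{n_1}(K)$ with $\tau_1$ orthogonal of Witt index $\ell$, and $A_2 = M_{n_2}(K)$ with $\tau_2$ symplectic. A direct dimension count shows that $(E_1',\sigma_1')$ and $(E_2^+,\sigma_2^+)$ both satisfy \eqref{E:dimen}, so Propositions~\ref{C.split} and \ref{B.split} applied globally produce the required embeddings $(E_1',\sigma_1')\hookrightarrow(A_2,\tau_2)$ and $(E_2^+,\sigma_2^+)\hookrightarrow(A_1,\tau_1)$.

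Now assume $v_0$ exists. For the forward direction I would invoke the unconditional local--global principle for symplectic involutions (Lemma~\ref{L:sympl}), and for the converse Lemma~\ref{LG.lem}(2), whose hypothesis~(2) is guaranteed by the existence of $v_0$. It therefore suffices to produce local embeddings at every place $v$. When $G_1$ and $G_2$ are split over $K_v$ --- that is, at every nonarchimedean place, every complex place, and every split real place --- both $A_1\otimes K_v$ and $A_2\otimes K_v$ are matrix algebras with standard orthogonal or symplectic involutions, and the local embeddings of $(E_1',\sigma_1')\otimes K_v$ and $(E_2^+,\sigma_2^+)\otimes K_v$ are supplied by Propositions~\ref{C.split} and \ref{B.split}. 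At an anisotropic real place, Table~\ref{R.tori} combined with Lemmas~\ref{B.R} and \ref{C.R} pins down the shape of the local algebras: since $(E_i\otimes K_v,\sigma_i\otimes\mathrm{id})$ must sit inside a compact algebra and therefore admit a definite form $\phi_b$, the only indecomposable factors permitted are $(\R,\mathrm{id})$ and $(\C,\bar{\ })$, forcing $(E_1\otimes K_v,\sigma_1\otimes\mathrm{id})\simeq \Cb^\ell\times(\R,\mathrm{id}_\R)$ and $(E_2\otimes K_v,\sigma_2\otimes\mathrm{id})\simeq \Cb^\ell$. The local embeddings of $(E_1',\sigma_1')\otimes K_v\simeq \Cb^\ell$ into $(M_\ell(\hh),\tau_2\otimes\mathrm{id})$ and of $(E_2^+,\sigma_2^+)\otimes K_v\simeq \Cb^\ell\times(\R,\mathrm{id}_\R)$ into $(M_{n_1}(\R),\tau_1\otimes\mathrm{id})$ are then exactly the embeddings constructed in the proofs of Lemmas~\ref{C.R} and \ref{B.R}.

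The final bijectivity statement is then immediate, since at the level of isomorphism classes the correspondences $(E_1,\sigma_1)\mapsto(E_1',\sigma_1')$ and $(E_2,\sigma_2)\mapsto(E_2^+,\sigma_2^+)$ are tautologically inverse. I expect the main obstacle to be the verification of hypothesis~(2) of Lemma~\ref{LG.lem} in the converse direction when $v_0$ exists: hypothesis~(1) is unavailable since $n_1=2\ell+1\ge 7$, and Example~7.5 of \cite{PrRap:Deven} shows that without hypothesis~(2) the local--global principle for orthogonal involutions may genuinely fail. What saves the argument is precisely the twin condition, together with the rigidity of maximal tori inside compact classical groups analyzed in \S\ref{S:real}: these force $(E_2^+,\sigma_2^+)\otimes K_{v_0}\simeq \Cb^\ell\times(\R,\mathrm{id}_\R)$ at the anisotropic real place, and thereby unlock the local--global lemma.
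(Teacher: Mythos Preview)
Your proposal is correct and follows essentially the same approach as the paper: the same case split on the existence of an anisotropic real place $v_0$, the same use of Propositions~\ref{C.split} and~\ref{B.split} globally in the split case and locally otherwise, the same identification of the local algebras at anisotropic real places via Table~\ref{R.tori}, and the same appeal to Lemma~\ref{L:sympl} for the symplectic direction and Lemma~\ref{LG.lem}(2) for the orthogonal direction. Your closing diagnosis of why hypothesis~(2) of Lemma~\ref{LG.lem} is the crux is exactly the point.
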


\begin{proof}
If we have $\rk_{K_v} G_1 =
\rk_{K_v} G_2 = \ell$ for all $v \in V^K_{\infty}$ then
the groups $G_1$ and $G_2$ are $K$-split by \eqref{I} and the Hasse Principle. Then $\tau_1$
corresponds to a nondegenerate symmetric bilinear form of Witt index
$\ell$, and $A_2 = M_{n_2}(K)$ with $\tau_2$ corresponding to a
nondegenerate skew-symmetric form. In this case, our claim
immediately follows from Propositions \ref{C.split} and
\ref{B.split}, as in Remark \ref{R:same}. So, we may assume that
there is a real $v_0 \in V^K_{\infty}$ such that
$\rk_{K_{v_0}}  G_1 = \rk_{K_{v_0}} G_2 = 0$.
Observe that given \emph{any} real $v \in V^K_{\infty}$ satisfying
$\rk_{K_v}  G_1 = \rk_{K_v}  G_2 = 0$, the data in
Table \ref{R.tori} shows that that for any $n_1$-dimensional
$\tau_1$-invariant \'etale subalgebra $E_1 \subset A_1$ satisfying
\eqref{E:dimen} and $\sigma_1 = \tau_1 \vert_{E_1}$ we have
\begin{equation}\label{E:E1}
(E_1 \otimes_K K_v , \sigma_1 \otimes \mathrm{id}_{K_v}) \simeq (\C
, \bar{\ })^{\ell} \times (\R , \mathrm{id}_{\R}),
\end{equation}
and for any $n_2$-dimensional $\tau_2$-invariant \'etale subalgebra
$E_2 \subset A_2$ satisfying \eqref{E:dimen} and $\sigma_2 = \tau_2
\vert_{E_2}$ we have
\begin{equation}\label{E:E2}
(E_2 \otimes_K K_v , \sigma_2 \otimes \mathrm{id}_{K_v}) \simeq (\C
, \bar{\ })^{\ell}.
\end{equation}
Let $(E_1 , \sigma_1)$ be as in the statement of the proposition. We
first show that for any $v \in V^K$ there is an embedding
$$
\iota_v \colon (E'_1 \otimes_K K_v , \sigma'_1 \otimes
\mathrm{id}_{K_v}) \hookrightarrow (A_2 \otimes_K K_v , \tau_2
\otimes \mathrm{id}_{K_v}).
$$
If $\rk_{K_v}  G_1 = \rk_{K_v}  G_2 = \ell$, this
follows from Proposition \ref{C.split}. Otherwise, $v$ is real, and
$\rk_{K_v}  G_1 = \rk_{K_v}  G_2 = 0$, so we see
from \eqref{E:E1} that
$$
(E'_1 \otimes_K K_v , \sigma'_1 \otimes \mathrm{id}_{K_v}) \simeq
(\C , \bar{\ })^\ell.
$$
Then the existence of $\iota_v$ follows from the argument given in
the proof of Lemma \ref{C.R}. Now, applying Lemma \ref{L:sympl} we
obtain the existence of an embedding $\iota \colon (E'_1 ,
\sigma'_1) \hookrightarrow (A_2 , \tau_2)$, as required.

Conversely, let $(E_2 , \sigma_2)$ be as in the proposition. Then
arguing as above (using Proposition \ref{B.split} and the proof of
Lemma \ref{B.R}) we obtain the existence of local embeddings
$$
\iota_v \colon (E^+_2 \otimes_K K_v , \sigma^+_2 \otimes
\mathrm{id}_{K_v}) \hookrightarrow (A_1 \otimes_K K_v , \tau_1
\otimes \mathrm{id}_{K_v})
$$
for all $v \in V^K$. It follows from \eqref{E:E2} that
$$
(E^+_2 \otimes_K K_{v_0} , \sigma^+_2 \otimes \mathrm{id}_{K_{v_0}})
\simeq (\C , \bar{\ })^{\ell} \times (\R , \mathrm{id}_{\R}).
$$
This enables us to use Lemma \ref{LG.lem} which yields the existence
of an embedding $(E^+_2 , \sigma^+_2) \hookrightarrow (A_1 ,
\tau_1)$, completing the argument.
\end{proof}

\medskip

The following consequence of the proposition proves the ``if''
component in both parts, (1) and (2), of Theorem \ref{BC}.

\begin{cor}\label{C:BC-twins}
Let $G_1$ and $G_2$ be absolutely almost simple algebraic groups of
types $\tB_{\ell}$ and $\tC_{\ell}$ respectively that are
twins. Then
\begin{enumerate}
\renewcommand{\theenumi}{\roman{enumi}}
\item  \label{twin.isog}
$G_1$ and $G_2$
have the same \underline{isogeny} classes of maximal $K$-tori.
\item \label{twin.same} If $G_1$ is adjoint and $G_2$ is
simply connected then $G_1$ and $G_2$ have the same
\underline{isomorphism} classes of maximal $K$-tori.
\end{enumerate}
\end{cor}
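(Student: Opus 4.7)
The plan is to derive both assertions directly from Proposition \ref{P:same-subalg}, which already packages all of the hard work; what remains is the dictionary between \'etale subalgebras and tori, together with a reduction (for the isogeny statement) to the adjoint/simply connected case.

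For part \eqref{twin.same}, I work with $G_1 = \nat{G}_1 = \mathrm{SU}(A_1,\tau_1)$ and $G_2 = \nat{G}_2 = \mathrm{SU}(A_2,\tau_2)$, so that every maximal $K$-torus of $G_i$ is of the form $\mathrm{SU}(E_i,\sigma_i)$ for an appropriate $\tau_i$-invariant \'etale subalgebra satisfying \eqref{E:dimen}. Starting from $T_1 = \mathrm{SU}(E_1,\sigma_1)$, I would decompose $(E_1,\sigma_1) = (E_1',\sigma_1') \times (K,\mathrm{id}_K)$ as in \eqref{E:decomp}, invoke Proposition \ref{P:same-subalg} to obtain an embedding $(E_1',\sigma_1') \hookrightarrow (A_2,\tau_2)$, and set $T_2 := \mathrm{SU}(E_1',\sigma_1') \subset G_2$. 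A short computation from the defining formula $T = \mathrm{R}_{E/K}(\GL_1) \cap G$ then shows that the $(K,\mathrm{id}_K)$ summand contributes nothing new: the norm-one condition forces the last coordinate $y$ to satisfy $y^2 = 1$, and the value $y = -1$ corresponds to a reflection of determinant $-1$ and so is excluded from $\mathrm{SO}$. Hence $T_1 \simeq \mathrm{SU}(E_1',\sigma_1') \simeq T_2$ as $K$-tori. The converse direction is the mirror argument, passing from $T_2 = \mathrm{SU}(E_2,\sigma_2)$ to $(E_2^+,\sigma_2^+) = (E_2,\sigma_2) \times (K,\mathrm{id}_K)$ and back into $(A_1,\tau_1)$.

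For part \eqref{twin.isog}, I would reduce to \eqref{twin.same}. Because the definition of twins only involves local $K_v$-ranks, which are invariant under central $K$-isogeny, $\nat{G}_1$ and $\nat{G}_2$ are themselves twins and part \eqref{twin.same} applies to them. Given any maximal $K$-torus $T_1 \subset G_1$, the central $K$-isogeny between $G_1$ and $\nat{G}_1$ restricts to a $K$-isogeny between $T_1$ and a maximal torus $T_1^{\natural}$ of $\nat{G}_1$; part \eqref{twin.same} then provides a $K$-isomorphism between $T_1^{\natural}$ and some maximal torus $T_2^{\natural}$ of $\nat{G}_2$; and the central isogeny from $\nat{G}_2$ onto $G_2$ carries $T_2^{\natural}$ to a $K$-isogenous maximal torus $T_2$ of $G_2$. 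Composing yields a $K$-isogeny connecting $T_1$ and $T_2$, and the reverse direction is identical. All the substantive content has been absorbed into Proposition \ref{P:same-subalg}; within the corollary itself, the only delicate point is the torus identification $\mathrm{SU}(E_1,\sigma_1) \simeq \mathrm{SU}(E_1',\sigma_1')$ in \eqref{twin.same}, which is precisely what forces the asymmetric adjoint/simply connected hypothesis there and makes the passage through $\nat{G}_1$ and $\nat{G}_2$ necessary in \eqref{twin.isog}.
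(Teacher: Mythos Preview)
Your proposal is correct and follows the same approach as the paper. The paper's proof is just two lines---``\eqref{twin.same} easily follows from the proposition, and \eqref{twin.isog} is an immediate consequence of \eqref{twin.same}''---and you have simply spelled out what those two lines mean: the dictionary between maximal $K$-tori and invariant \'etale subalgebras (together with the harmless extra $(K,\mathrm{id}_K)$ factor, cf.\ Table~\ref{R.tori} and Remark~\ref{R:same}) turns Proposition~\ref{P:same-subalg} into \eqref{twin.same}, and the reduction through $\nat{G}_1$, $\nat{G}_2$ gives \eqref{twin.isog}.
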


\begin{proof}
\eqref{twin.same} easily follows from the proposition, and \eqref{twin.isog} is an immediate
consequence of \eqref{twin.same}.
\end{proof}

\begin{rmk}\label{R:L=2}
The assumption $\ell \geqslant 3$ was never used in Proposition
\ref{P:same-subalg} and Corollary \ref{C:BC-twins}. So, these
statements remain valid also for $\ell = 2$, which will be helpful
in \S~\ref{S:Compl}.
\end{rmk}

We now turn to the proof of the ``only if'' direction in both parts
of Theorem \ref{BC} where the assumption $\ell \geqslant 3$ becomes
essential and will be kept throughout the rest of the section.
This direction requires a bit more work and
involves the notion of \emph{generic tori}. To recall the relevant
definitions, we let $G$ denote a semi-simple algebraic $K$-group,
and  fix a maximal $K$-torus $T$ of $G$. Furthermore, we let $\Phi(G
, T)$ denote the corresponding root system, and let $K_T$ denote the
minimal splitting field of $T$ over $K$. The natural action of
$\mathrm{Gal}(K_T/K)$ on the group of characters $X(T)$ gives rise
to an injective group homomorphism
$$
\theta_T \colon \mathrm{Gal}(K_T/K) \longrightarrow
\mathrm{Aut}(\Phi(G , T)).
$$
Then $T$ is called \emph{generic} (over $K$) if
$\theta_T(\mathrm{Gal}(K_T/K))$ contains the Weyl group $W(G , T)$.
As the following statement shows, generic tori with prescribed local
properties always exist.
\begin{prop}\label{P:generic-tori}
{\rm \cite[Corollary 3.2]{PrRap:weakly}} Let $G$ be an absolutely
almost simple algebraic $K$-group, and let $V \subset V^K$ be a
finite subset. Suppose that for each $v \in V$ we are given a
maximal $K_v$-torus $T^{(v)}$ of $G$. Then there exists a maximal
$K$-torus $T$ of $G$ which is generic over $K$ and which is
conjugate to $T^{(v)}$ by an element of $G(K_v)$ for all $v \in V$.
\end{prop}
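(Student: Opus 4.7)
The plan is to build the torus $T$ by combining three standard tools applied to the variety $\mathfrak{X}$ of maximal tori of $G$: rationality of $\mathfrak{X}$, weak approximation, and Hilbert irreducibility. Recall that if $T_0$ is a fixed maximal $K$-torus, then $\mathfrak{X} \simeq G/N_G(T_0)$ is $K$-rational (this is a theorem of Chevalley, and is essentially the rationality statement already cited at the start of \S\ref{ffield}). In particular $\mathfrak{X}(K)$ satisfies weak approximation: its image in $\prod_{v \in V} \mathfrak{X}(K_v)$ is dense. The second crucial consequence is that Hilbert's irreducibility theorem applies to $\mathfrak{X}$, and may be strengthened so as to produce $K$-points avoiding a prescribed thin subset even within a prescribed $v$-adic open neighborhood, by the standard form of Ekedahl/Serre (see, e.g., Serre, \emph{Lectures on the Mordell--Weil theorem}, Ch.~3).

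First, I would translate the local data into points of $\mathfrak{X}$. Each prescribed torus $T^{(v)}$ corresponds to a point $x^{(v)} \in \mathfrak{X}(K_v)$, and the $G(K_v)$-conjugacy class of $T^{(v)}$ is the orbit of $x^{(v)}$ under $G(K_v)$. Because the map $G \to \mathfrak{X}$, $g \mapsto g T_0 g^{-1}$, is a smooth submersion, the implicit function theorem over $K_v$ shows that each such $G(K_v)$-orbit is open in $\mathfrak{X}(K_v)$ in the $v$-adic topology. Thus a $K$-torus $T$ whose class in $\mathfrak{X}(K)$ lies sufficiently close (simultaneously for all $v \in V$) to each $x^{(v)}$ will automatically be $G(K_v)$-conjugate to $T^{(v)}$ for every $v \in V$. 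By weak approximation on $\mathfrak{X}$ such $T$ exist in abundance.

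Second, I would impose genericity. The condition that $T$ fails to be generic, i.e.\ that $\theta_T(\mathrm{Gal}(K_T/K))$ fails to contain the Weyl group $W(G,T)$, is captured by finitely many Galois-theoretic conditions on the splitting behavior of $T$. Concretely, these cut out a thin subset $\mathcal{N} \subset \mathfrak{X}(K)$ in the sense of Serre: for each proper subgroup $H$ of $W = W(G,T_0)$ that does not already fail to contain $W$ for trivial reasons, one forms the finite \'etale cover $\mathfrak{Y}_H \to \mathfrak{X}$ parametrizing tori together with an $H$-compatible splitting, and $\mathcal{N}$ is the union of images of the $\mathfrak{Y}_H(K)$.

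Finally, I would combine the two steps: the strengthened Hilbert irreducibility theorem for the rational variety $\mathfrak{X}$ produces a $K$-point in the $v$-adic neighborhoods constructed above that additionally lies outside $\mathcal{N}$. The corresponding $K$-torus $T$ is then both generic and conjugate to $T^{(v)}$ by an element of $G(K_v)$ for each $v \in V$. The main technical obstacle is precisely the compatibility of Hilbert irreducibility with prescribed $v$-adic open conditions; once one has the adelic/open-set-aware version of the theorem (available for rational varieties over number fields), the rest of the argument is a routine assembly of the ingredients above.
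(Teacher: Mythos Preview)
The paper does not give its own proof of this proposition: it is stated with the citation \cite[Corollary 3.2]{PrRap:weakly} and no argument follows. So there is nothing in the present paper to compare your proposal against.

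That said, your outline is essentially the standard argument (and is in fact the strategy used in \cite{PrRap:weakly}): rationality of the variety $\mathfrak{X}$ of maximal tori gives weak approximation, the $G(K_v)$-orbits in $\mathfrak{X}(K_v)$ are open since $G \to \mathfrak{X}$ is a smooth submersion, the non-generic tori form a thin subset of $\mathfrak{X}(K)$, and the $v$-adically refined Hilbert irreducibility theorem (Ekedahl/Serre) lets one pick a $K$-point that is simultaneously close to the prescribed $x^{(v)}$ and outside the thin set. Your description of the thin set is slightly garbled (the phrase ``proper subgroup $H$ of $W$ that does not already fail to contain $W$'' does not parse; every proper subgroup fails to contain $W$), but the intended construction is clear: one takes the natural $W$-torsor over $\mathfrak{X}$ and forms, for each maximal subgroup $H \subsetneq W$, the intermediate cover $\mathfrak{Y}_H \to \mathfrak{X}$; a torus is non-generic precisely when its point lifts to some $\mathfrak{Y}_H(K)$. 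With that correction the sketch is sound.
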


We now return to the situation where $G_1$ and $G_2$ are absolutely
almost simple $K$-groups of types $\tB_{\ell}$ and $\tC_{\ell}$
$(\ell \geqslant 3)$ respectively. We let $\nat{G}_1$ denote the
adjoint group of $G_1$, and  $\nat{G}_2$ the simply connected cover
of $G_2$. Furthermore, given a maximal $K$-torus $T_i$ of $G_i$, we
let $\nat{T}_i$ denote the image of $T_i$ in $\nat{G}_i$ if $i = 1$
and the preimage of $T_i$ in $\nat{G}_i$ if $i = 2$.

\begin{prop}\label{P:isom}
Let $T_i$ be a generic maximal $K$-torus of $G_i$ where $i = 1, 2$.
If there exists a $K$-isogeny $\pi \colon T_i \to T_{3-i}$ onto a
maximal $K$-torus of $G_{3-i}$ then there exist a $K$-isomorphism
$\nat{T}_i \simeq \nat{T}_{3-i}$.
\end{prop}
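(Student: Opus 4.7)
By symmetry we may assume $i = 1$, so we are given a $K$-isogeny $\pi \colon T_1 \to T_2$; it induces a Galois-equivariant injection $\pi^* \colon X^*(T_2) \hookrightarrow X^*(T_1)$ of finite index, and $\pi^* \otimes_{\Z} \Q$ is an isomorphism of $\Q$-vector spaces.  Consequently $T_1$ and $T_2$ share a common minimal splitting field $L$ over $K$, and the same holds for $\nat{T}_1$ and $\nat{T}_2$ (whose character lattices sit between $X^*(T_i)$ and $X^*(T_i) \otimes \Q$ with the same $\Q$-span).  Set $\Gamma := \Gal(L/K)$ and let $\theta_i \colon \Gamma \to \Aut(\Phi(G_i , T_i))$ be the natural map.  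For $\ell \ge 3$ the root systems $\tB_\ell$ and $\tC_\ell$ admit no nontrivial diagram automorphisms, so in their standard realization in $\R^\ell$ both $\Aut(\Phi(G_1, T_1))$ and $\Aut(\Phi(G_2, T_2))$ coincide with the common hyperoctahedral group $W := W(\tB_\ell) = W(\tC_\ell)$; since each $T_i$ is generic, $\theta_i$ is an isomorphism $\Gamma \xrightarrow{\sim} W$.

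Using the standard realization, identify both $X^*(\nat{T}_1) = Q(\tB_\ell)$ (the root lattice of $\tB_\ell$) and $X^*(\nat{T}_2) = P(\tC_\ell)$ (the weight lattice of $\tC_\ell$) with the lattice $\Z^\ell \subset \Q^\ell =: V$, carrying the standard $W$-action by signed permutations.  Under these identifications the Galois structure on $X^*(\nat{T}_i)$ is $\gamma \cdot x = \theta_i(\gamma)(x)$.  To produce a $K$-isomorphism $\nat{T}_1 \simeq \nat{T}_2$ it is then enough to find $w \in W \subset \GL_\ell(\Z)$ with $w \theta_2(\gamma) w^{-1} = \theta_1(\gamma)$ for all $\gamma \in \Gamma$: left multiplication by $w$ will give the required $\Gamma$-equivariant $\Z$-linear isomorphism $X^*(\nat{T}_2) \xrightarrow{\sim} X^*(\nat{T}_1)$.

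The element $g := \pi^*_\Q \in \GL(V)$ already provides this intertwining relation over $\Q$: $g \theta_2(\gamma) g^{-1} = \theta_1(\gamma)$, so $g$ normalizes $W$ inside $\GL(V)$.  The remaining---and technically central---step is the normalizer computation
\[
N_{\GL(V)}(W) \; = \; \Q^\times \cdot W,
\]
from which $g = c \cdot w$ with $c \in \Q^\times$ and $w \in W$, whereupon $w$ provides the required integral intertwiner.  This normalizer identity is the main obstacle, but reduces to two standard observations: first, because $W$ acts absolutely irreducibly on $V$, the $W$-invariant symmetric bilinear form on $V$ is unique up to scalar, so any element of $N_{\GL(V)}(W)$ preserves this form up to a scalar and, after rescaling, may be assumed to lie in $O(V)$; second, an orthogonal $W$-normalizer permutes the reflections of $W$, hence the roots $\Phi(\tB_\ell) \subset V$, and so lies in $\Aut(\Phi(\tB_\ell))$, which equals $W$ for $\ell \ge 3$.
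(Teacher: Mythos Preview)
Your proof is correct and follows essentially the same route as the paper's: both reduce the question to showing that the $\Q$-linear intertwiner induced by $\pi^*$ differs from a fixed reference identification by a scalar times an element of the Weyl group. The paper records this via a reference map $\phi_0 \colon V_2 \to V_1$ and cites \cite[Lemma~4.3]{PrRap:weakly} for the assertion $\phi = \lambda \cdot z \cdot \phi_0$ with $z \in W_1$, whereas you identify both $\nat{X}_i$ with $\Z^\ell$ at the outset and prove the equivalent normalizer identity $N_{\GL(V)}(W) = \Q^\times \cdot W$ directly; your sketch of that identity is fine, though the step ``permutes the reflections, hence the roots'' tacitly uses that for $\ell \ge 3$ the two reflection conjugacy classes in $W(\tB_\ell)$ have different cardinalities ($\ell$ versus $\ell(\ell-1)$), so an orthogonal normalizer preserves each root length separately and therefore sends roots to roots.
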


The proof below is an adaptation of
Lemma 4.3 and Remark 4.4 in \cite{PrRap:weakly}.

\begin{proof}
We have $K_{T_1} = K_{T_2} =: L$, and let $\mathcal{G} =
\mathrm{Gal}(L/K)$. Then $\theta_{T_j}$ is an isomorphism of
$\mathcal{G}$ on $W_j = W(G_j , T_j)$ for $j = 1, 2$. The isogeny
$\pi$ induces a $\mathcal{G}$-equivariant homomorphism of character
groups $\pi^* \colon X(T_{3-i}) \to X(T_i)$. Let $\nat{X}_j =
X(\nat{T}_j)$; we need to prove that there is a
$\mathcal{G}$-equivariant \emph{isomorphism} $\psi \colon
\nat{X}_{3-i} \to \nat{X}_i$. (We recall that $\nat{X}_1$ is the
subgroup of $X(T_1)$ generated by all the roots in $\Phi_1 =
\Phi(G_1 , T_1)$, and $\nat{X}_2$ is generated by the weights of the
root system $\Phi_2 = \Phi(G_2 , T_2)$.)

To avoid cumbersome notations, we will assume that $i = 1$.  (This
does not restrict generality as along with $\pi$ there is always a
$K$-isogeny $\pi' \colon T_{3-i} \to T_i$.) Consider
$$
\phi = \pi^* \otimes \mathrm{id}_{\R} \colon V_2 = X(T_2)
\otimes_{\Z} \R \longrightarrow X(T_1) \otimes_{\Z} \R = V_2
$$
and $\mu \colon W_2 \to W_1$ defined by $\mu = \theta_{T_1} \circ
\theta^{-1}_{T_2}$. Then the fact that $\pi^*$ is
$\mathcal{G}$-equivariant implies that
\begin{equation}\label{E:equiv1}
\phi(w \cdot v) = \mu(w) \cdot \phi(v) \ \ \text{for all} \ \  v \in
V_2, \ w \in W_2.
\end{equation}
On the other hand, it follows from the explicit description of the
root systems as in \cite{Bou:g4} that there exists a linear
isomorphism $\phi_0 \colon V_2 \to V_1$ and a group isomorphism
$\mu_0 \colon W_2 \to W_1$ such that
\begin{equation}\label{E:equiv2}
\phi_0(w \cdot v) = \mu_0(w) \cdot \phi_0(v) \ \ \text{for all} \ \
v \in V_2, \ w \in W_2,
\end{equation}
$\phi_0$ takes the short roots of $\Phi_2$ to the long roots of
$\Phi_1$, and $(1/2)\phi_0$ takes the long roots of $\Phi_2$ to the
short roots of $\Phi_1$, consequently $\phi_0(\nat{X}_2) =
\nat{X}_1$. (Note that we identify $W_j$ with the Weyl group of the
root system $\Phi_j$.) 

We claim that there exists a nonzero $\lambda \in \R$ and $z \in W_1$ such that
$$
\phi(v) = \lambda \cdot z \cdot \phi_0(v) \ \ \text{and} \ \ \mu(w)
= z \cdot \mu_0(w) \cdot z^{-1} \quad \text{for all $v \in V_2$, $w \in W_2$.}
$$
Indeed, it was shown in \cite[Lemma 4.3]{PrRap:weakly} (using that $\ell \geqslant 3$) that a suitable multiple $\phi' =
\lambda^{-1} \cdot \phi$ takes the short roots of $\Phi_2$ to the
long roots of $\Phi_2$, and $(1/2)\phi_0$ takes the long roots of
$\Phi_2$ to the short roots of $\Phi_1$. Then $z := \phi' \circ
\phi^{-1}_0$ is an automorphism of $\Phi_1$, hence can be identified
with an element of $W_1$. This gives the formula for $\phi$, and
then the formula for $\mu$ follows from \eqref{E:equiv1} and
\eqref{E:equiv2}.

Put $\psi := \lambda^{-1} \cdot \phi$.
Then
$$
\psi(\nat{X}_2) = z(\phi_0(\nat{X}_2)) = \nat{X}_1,
$$
and $\psi$ is $\mathcal{G}$-equivariant, as required.
\end{proof}

\begin{cor} \label{II.cor}
Let $T_i$ be a generic maximal $K$-torus of $G_i$. If there exists
$v \in V$ such that $\nat{T}_i$ does not allow a $K_v$-defined
embedding into $\nat{G}_{3-i}$ then $T_i$ is not $K$-isogenous to
any maximal $K$-torus $T_{3-i}$ of $G_{3-i}$. Thus, if $G_1$ and
$G_2$ have the same isogeny classes of maximal $K$-tori then
$\nat{G}_1$ and $\nat{G}_2$ have the same isomorphism classes of
maximal $K_v$-tori for all $v \in V$.
\end{cor}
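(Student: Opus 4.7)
My plan is to prove the two assertions separately, with the first being a direct application of Proposition \ref{P:isom} and the second a combination of the first with the generic torus existence result, Proposition \ref{P:generic-tori}.

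For the first assertion, I would argue by contrapositive. Suppose that $T_i$ is $K$-isogenous to some maximal $K$-torus $T_{3-i}$ of $G_{3-i}$. Since $T_i$ is generic, Proposition \ref{P:isom} yields a $K$-isomorphism $\nat{T}_i \simeq \nat{T}_{3-i}$. Composing with the inclusion $\nat{T}_{3-i} \hookrightarrow \nat{G}_{3-i}$ produces a $K$-defined embedding $\nat{T}_i \hookrightarrow \nat{G}_{3-i}$, which base-changes to a $K_v$-embedding for every $v \in V^K$, contradicting the hypothesis.

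For the second assertion, assume $G_1$ and $G_2$ have the same isogeny classes of maximal $K$-tori, fix $v \in V$, and let $S$ be an arbitrary maximal $K_v$-torus of $\nat{G}_i$ for some $i \in \{1,2\}$. Since the natural central $K$-isogeny between $G_i$ and $\nat{G}_i$ sets up a bijection between maximal tori, there is a (unique) maximal $K_v$-torus $\hat{S}$ of $G_i$ with $\nat{\hat{S}}$ equal (i.e.\ $K_v$-isomorphic) to $S$. Now Proposition \ref{P:generic-tori} produces a generic maximal $K$-torus $T_i$ of $G_i$ which is $G_i(K_v)$-conjugate to $\hat{S}$; in particular $\nat{T}_i$ is $K_v$-isomorphic to $S$. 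By our hypothesis, $T_i$ is $K$-isogenous to some maximal $K$-torus $T_{3-i}$ of $G_{3-i}$, so Proposition \ref{P:isom} (applicable since $T_i$ is generic) gives a $K$-isomorphism $\nat{T}_i \simeq \nat{T}_{3-i}$. Base-changing to $K_v$, the torus $\nat{T}_{3-i}$ is a maximal $K_v$-torus of $\nat{G}_{3-i}$ that is $K_v$-isomorphic to $\nat{T}_i \simeq S$. Interchanging the roles of $i$ and $3-i$, this shows $\nat{G}_1$ and $\nat{G}_2$ have the same isomorphism classes of maximal $K_v$-tori.

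The only potentially delicate point is the bookkeeping in the second assertion surrounding the $\nat{(\cdot)}$ operation, which has a different meaning for $i = 1$ (pass to adjoint, take image of the torus) and for $i = 2$ (pass to the simply connected cover, take preimage of the torus). In both cases, though, the central isogeny $G_i \leftrightarrow \nat{G}_i$ restricts to a central isogeny on maximal tori that is equivariant with the Galois action and behaves well under base change and conjugation, so the passage $\hat{S} \mapsto \nat{\hat{S}} = S$ and the transport of the $G_i(K_v)$-conjugacy from $\hat{S}$ to $T_i$ down to a $\nat{G}_i(K_v)$-conjugacy of $S$ and $\nat{T}_i$ is routine. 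Hence I do not expect any substantive obstacle beyond invoking Propositions \ref{P:isom} and \ref{P:generic-tori} at the right moments.
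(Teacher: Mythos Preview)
Your proof is correct and matches the paper's approach: the first assertion is the contrapositive of Proposition \ref{P:isom}, and the second combines Proposition \ref{P:isom} with Proposition \ref{P:generic-tori} to approximate a given local torus by a generic global one. The only cosmetic difference is that the paper phrases the second assertion as a contrapositive (invoking the first assertion) rather than arguing directly as you do, but the content is identical.
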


\begin{proof}
The first assertion immediately follows from the
proposition. To derive the second assertion from the first, we
observe that given $v \in V$ and a maximal $K_v$-torus
$\mathcal{T}_i$ of $\nat{G}_i$ that does not allow a $K_v$-embedding
into $\nat{G}_{3-i}$, we can find a maximal $K$-torus $T_i$ of $G_i$
such that $\nat{T}_i$ is conjugate to $\mathcal{T}_i$ by an element
$\nat{G}_i(K_v)$.
\end{proof}
 
\begin{proof}[Proof of Theorem \ref{BC}, ``only if'']
Assume $G_1$ and $G_2$ have the same isogeny classes of maximal $K$-tori.  Then by Corollary \ref{II.cor}, $\nat{G_1}$ and $\nat{G_2}$ have the same isomorphism classes of maximal $K_v$-tori for all $v$.  It follows that $G_1$ and $G_2$ are twins (by Corollary \ref{C:3.4} for $v$ real and Lemma \ref{nonarch} for $v$ finite), completing the proof of 
 part (1) of Theorem \ref{BC}.

Now suppose that $G_1$ and $G_2$ have the same \emph{isomorphism} classes of
maximal $K$-tori, in particular,  there is a $K$-isomorphism $\pi
\colon T_1 \to T_2$ between two generic $K$-tori. Then as in the proof of Proposition \ref{P:isom},
$\pi^*$ induces $\phi \colon V_2 \to V_1$ which necessarily
satisfies $\phi(X(T_2)) = X(T_1)$ and $\phi(X(\nat{T}_2)) =
X(\nat{T}_1)$. Since $X(\nat{T}_1) \subseteq X(T_1)$ and $X(\nat{T}_2)
\supseteq X(T_2)$, this is possible only if both inclusions are in
fact equalities, i.e., $G_1 = \nat{G}_1$ and $G_2 = \nat{G}_2$.  This completes the proof of part (2) of Theorem \ref{BC}.
\end{proof}

\section{Weakly commensurable subgroups and proof of Theorem
\ref{T:1}}\label{S:WC}

We begin by recalling the notion of weak commensurability of
Zariski-dense subgroups introduced in \cite{PrRap:weakly}. Let $G_1$
and $G_2$ be semi-simple algebraic groups over a field $F$ of
characteristic zero, and let $\Gamma_i \subset G_i(F)$ be a
Zariski-dense subgroup for $i = 1, 2$. Semi-simple elements
$\gamma_i \in \Gamma_i$ are \emph{weakly commensurable} if there
exist maximal $F$-tori $T_i$ of $G_i$ such that $\gamma_i \in
T_i(F)$ and for some characters $\chi_i \in X(T_i)$ we have
$$
\chi_1(\gamma_1) = \chi_2(\gamma_2) \neq 1.
$$
Furthermore, the subgroups $\Gamma_1$ and $\Gamma_2$ are
\emph{weakly commensurable} if every semi-simple element $\gamma_1
\in \Gamma_1$ of infinite order is weakly commensurable to some
$\gamma_2 \in \Gamma_2$ of infinite order, and vice versa. 

The focus in \cite{PrRap:weakly} was on analyzing when two Zariski-dense
$S$-arithmetic subgroups in absolutely almost simple algebraic
groups are weakly commensurable. This analysis was based on a
description of such $S$-arithmetic groups in terms of triples, which
we will now briefly recall. Let $G$ be a (connected) absolutely almost
simple algebraic group defined over a field $F$ of characteristic
zero, $\overline{G}$ be its adjoint group, and $\pi \colon G \to
\overline{G}$ be the natural isogeny. Suppose we are given the
following data:
\begin{itemize}
\item  a number field $K$ with a fixed embedding $K
\hookrightarrow F$;

\item a finite set $S$ of valuations of $K$ containing all
archimedean valuations; and
\item an $F/K$-form $\mathscr{G}$ of
$\overline{G}$ (i.e., a $K$-defined algebraic group such that there
exists an $F$-defined isomorphism of algebraic groups
${}_F\mathscr{G} \simeq \overline{G}$, where ${}_F\mathscr{G}$ is
the group obtained from $\mathscr{G}$ by the extension of scalars
$F/K$).
\end{itemize}
(Note that it is assumed in addition that $S$ does not
contain any nonarchimedean valuations $v$ such that $\mathscr{G}$ is
$K_v$-anisotropic.) We then have an embedding $\iota \colon
\mathscr{G}(K) \hookrightarrow \overline{G}(F)$ and a natural
$S$-arithmetic subgroup $\mathscr{G}(\mathcal{O}_K(S))$, where
$\mathcal{O}_K(S)$ is the ring of $S$-integers in $K$, defined in
terms of a fixed $K$-embedding $\mathscr{G} \hookrightarrow
\mathrm{GL}_n$, i.e. $\mathscr{G}(\mathcal{O}_K(S)) = \mathscr{G}(K)
\cap \mathrm{GL}_n(\mathcal{O}_K(S))$. A subgroup $\Gamma$ of $G(F)$
such that $\pi(\Gamma)$ is commensurable with
$\iota(\mathscr{G}(\mathcal{O}_K(S)))$ is called $(\mathscr{G}, K,
S)$-{\it arithmetic}. (It should be pointed out that we do not fix
an $F$-defined isomorphism ${}_F\mathscr{G} \simeq \overline{G}$ in
this definition, and by varying it we obtain a class of subgroups
invariant under $F$-defined automorphisms of $\mathscr{G}$ in the
obvious sense.)

It was
shown in \cite{PrRap:weakly} that if $G_i$ is absolutely almost
simple and $\Gamma_i$ is Zariski-dense and $(\G_i, K_i,
S_i)$-arithmetic for $i = 1, 2$ then the weak commensurability of
$\Gamma_1$ and $\Gamma_2$ implies that $K_1 = K_2=:K$ and $S_1 =
S_2=:S$, and additionally either $G_1$ and $G_2$ are
of the same type or one of them is of type $\tB_{\ell}$ and
the other is of type $\tC_{\ell}$ for some $\ell \ge
3$. That paper also contains many precise conditions for two
$S$-arithmetic subgroups to be weakly commensurable in the case
where $G_1$ and $G_2$ are of the \emph{same} type. The goal of this
section is to prove Theorem \ref{T:1} which provides such conditions when
one of the groups is of type $\tB_{\ell}$ and the other of
type $\tC_{\ell}$ $(\ell \ge 3)$. In conjunction with
the previous results, this completes the investigation of weak
commensurability of $S$-arithmetic subgroups in absolutely almost
simple groups over number fields.

\begin{proof}[Proof of Theorem \ref{T:1}]
Let $G_1$ and $G_2$ be absolutely almost simple algebraic groups
of types $\tB_{\ell}$ and $\tC_{\ell}$ $(\ell \geqslant 3)$
respectively defined over a number field $K$, and let $\Gamma_i$ be
a Zariski-dense $(\G_i , K, S)$-arithmetic subgroup of $G_i$. 

Suppose that $\G_1$ and $\G_2$ are twins. Then by Theorem \ref{BC},
they have the same isogeny classes of maximal $K$-tori. This
\emph{automatically} implies that $\Gamma_1$ and $\Gamma_2$ are
weakly commensurable. To see this, we basically need to repeat the
argument given in \cite[Example 6.5]{PrRap:weakly}, which we also
give here for the reader's convenience. First, we may assume without
any loss of generality that $G_1$ and $G_2$ are adjoint (cf.\ Lemma
2.4 in \cite{PrRap:weakly}), hence $\Gamma_i \subset \G_i(K)$. Let
$\gamma_1 \in \Gamma_1$ be a semi-simple element of infinite order,
and let $T_1$ be a maximal $K$-torus of $\G_1$ that contains
$\gamma_1$. Then there exists a $K$-isogeny $\varphi \colon T_1 \to
T_2$ onto a maximal $K$-torus $T_2$ of $\G_2$. The subgroup
$\varphi(T_1(K) \cap \Gamma_1)$ is an $S$-arithmetic subgroup of
$T_2(K)$, so there exists $n > 0$ such that $\gamma_2 :=
\varphi(\gamma_1)^n \in \Gamma_2$. Let $\chi_1 \in
\varphi^*(X(T_2))$ be a character such that $\chi_1(\gamma_1)$ is
not a root of unity, and let $\chi_2 \in X(T_2)$ be such that
$\varphi^*(\chi_2) = \chi_1$. Then
$$
(n\chi_1)(\gamma_1) = \chi_1(\gamma_1)^n = \chi_2(\gamma_2) \neq 1,
$$
which implies that $\Gamma_1$ and $\Gamma_2$ are weakly
commensurable.

Conversely, suppose that $\Gamma_1$ and $\Gamma_2$ are weakly
commensurable. According to \cite[Theorem 6.2]{PrRap:weakly}, this
in particular implies that
$$
\rk_{K_v}  \G_1 = \rk_{K_v}  \G_2 \quad \text{for all $v \in V^K$.}
$$
 As we have seen in Lemma \ref{nonarch}, for $v \in
V^K_f$ and the groups under consideration, the equality of ranks
implies that both groups are actually $K_v$-split, verifying
condition \eqref{I} in \S~\ref{S:BC}. Assume that condition \eqref{II} fails for
a real $v_0 \in V^K_{\infty}$. Then by Corollary \ref{C:3.4}, there
is an $i \in \{1 , 2\}$ and a maximal $K_{v_0}$-torus
$\mathcal{T}_i$ of $\nat{\G}_i$ that does not allow a
$K_{v_0}$-embedding into $\nat{\G}_{3-i}$; obviously $\mathcal{T}_i$
is $K_{v_0}$-isotropic. Let $T^{(v_0)}_i$ be a maximal
$K_{v_0}$-torus of $\G_i$ such that $(\nat{T}_i)^{(v_0)} =
\mathcal{T}_i$. Furthermore, for $v \in S \setminus \{ v_0 \}$ we
let $T^{(v)}_i$ denote a maximal $K_v$-torus of $\G_i$ such that
$\rk_{K_v}  T^{(v)}_i = \rk_{K_v}  \G_i$. Using
Proposition \ref{P:generic-tori}, we can find a maximal $K$-torus
$T_i$ of $\G_i$ that is generic and that is conjugate to $T^{(v)}$
by an element of $\G_i(K_v)$ for all $v \in S \cup \{ v_0 \}$. Then
clearly
$$
\rk_S   T_i  := \sum_{v \in S} \rk_{K_v}   T_i > 0
$$
as $\rk_S   \G_i > 0$. By Dirichlet's Theorem \cite[Theorem
5.12]{PlatRap}, the group of $S$-integral points
$T_i(\mathcal{O}_K(S))$ has the following structure: $H \times \Z^d$
where $d = \rk_S   T_i - \rk_K   T_i$. Since $T_i$
is obviously $K$-anisotropic, we conclude that there exists
$\gamma_i \in T_i(K) \cap \Gamma_i$ of infinite order (as in the
previous paragraph, we are assuming that $G_1$ and $G_2$ are
adjoint, hence $\Gamma_j \subset \G_j(K)$ for $j = 1, 2$). Then
$\gamma_i$ is weakly commensurable to some semi-simple $\gamma_{3-i}
\in \Gamma_{3-i}$ of infinite order. Let $T_{3-i}$ be a maximal
$K$-torus of $\G_{3-i}$ containing $\gamma_{3-i}$. By the Isogeny
Theorem \cite[Theorem 4.2]{PrRap:weakly}, the tori $T_i$ and
$T_{3-i}$ are $K$-isogenous. Using Proposition \ref{P:isom}, we
conclude that $\nat{T}_i$ and $\nat{T}_{3-i}$ are $K$-isomorphic.
This implies that over $K_{v_0}$, the torus $\mathcal{T}_i \simeq
\nat{T}_i$ has an embedding into $\G_{3-i}$. A contradiction,
proving \eqref{II}, and completing the proof of Theorem \ref{T:1}.
\end{proof}

As we already mentioned, the notion of weak commensurability was
introduced in order to tackle some differential-geometric problems
dealing with length-com\-men\-sur\-able and isospectral locally symmetric
spaces, and we would like to conclude this section with a sample of
geometric consequences of the results of the current paper
established in \cite{PrRap:fields}. For a Riemannian manifold $M$,
we let $L(M)$ denote the weak length spectrum of $M$, i.e., the
collection of lengths of all closed geodesics in $M$. Two Riemannian
manifolds $M_1$ and $M_2$ are called \emph{length-commensurable} if
$\Q \cdot L(M_1) = \Q \cdot L(M_2)$.
\begin{equation} \label{geom1}
\begin{tabular}{||p{4.3in}}
\emph{Let $M_1$ be an arithmetic
quotient of the real hyperbolic space ${\mathbb H}^p$ $(p \ge 5)$, and $M_2$
be an arithmetic quotient of the quaternionic hyperbolic space
${\mathbb H}^q_{\mathbf{H}}$ $(q \ge 2)$. Then $M_1$ and
$M_2$ are not length-commensurable.}
\end{tabular}
\end{equation}
Theorem \ref{T:1} is used to handle the case $p = 2n$ and $q = n-1$ for $n \ge 3$; for other values of $p$ and $q$, the claim follows from \cite[Th.~8.15]{PrRap:weakly}.

Now, let $\X_1$ be the symmetric space of the real Lie
group $\mathcal{G}_1 = \SO(n + 1 , n)$, and let $\X_2$ be
the symmetric space of the real Lie group $\mathcal{G}_2 = \Sp_{2n}$
where $n \geqslant 3$.
\begin{equation}\label{geom2}
\begin{tabular}{||p{4.3in}}
\emph{Let $M_i$ be the quotient
of $\X_i$ by a $(\G_i , K)$-arithmetic subgroup of
$\mathcal{G}_i$ for $i = 1, 2$. If $\G_1$ and $\G_2$ are twins then
\[
\Q \cdot L(M_2) = \lambda \cdot \Q \cdot L(M_1) \ \ \text{where} \ \
\lambda = \sqrt{\frac{2n+2}{2n - 1}}.
\]}
\end{tabular}
\end{equation}
(We refer to \cite{PrRap:weakly}, \S~1, for the notion of
arithmeticity and the explanation of other terms used here.) We
finally note that even though one can make $\X_1$ and $\X_2$ length-commensurable by scaling the metric on one
of them, this will never make them isospectral \cite{Yeung:isospectral}.

\section{Proofs of Proposition \ref{tori.old} and Theorem \ref{rank2}}\label{S:Compl}

\begin{proof}[Proof of Proposition \ref{tori.old}] We can assume
that $G_1$ and $G_2$ are connected absolutely almost simple
\emph{adjoint} $K$-groups having the same isogeny classes of maximal
$K$-tori. Assume that provisions (2) and (3) of the proposition do
not hold; let us show that (1) must hold. First, by \cite[Theorem
7.5]{PrRap:weakly}, $G_1$ and $G_2$ have the same Killing-Cartan
type. Furthermore, if $L_i$ is the minimal Galois extension of $K$
over which $G_i$ becomes an inner form then $L_1 = L_2$; in other
words, $G_1$ and $G_2$ are inner twists of the \emph{same}
quasi-split $K$-group. So, the required assertion is a consequence
of the following lemma.
\end{proof}

\begin{lem}\label{odd.similar}
Let $G_1$ and $G_2$ be connected absolutely almost simple \underline{adjoint}
$K$-groups of the same Killing-Cartan type which is different from
$\tA_{\ell}$ $(\ell > 1)$, $\tD_{2\ell + 1}$ $(\ell > 1)$ or $E_6$.
Assume that $G_1$ and $G_2$ are inner twists of the same quasi-split
$K$-group (which holds automatically if $G_1$ and $G_2$
are not of type $\tD$). If $G_1$ and $G_2$ have the same isogeny
classes of maximal $K$-tori then $G_1 \simeq G_2$.
\end{lem}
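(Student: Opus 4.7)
The plan is to prove $G_1 \simeq G_2$ by a local-global argument. First, one transfers the hypothesis on $K$-tori to each completion: by the rationality of the variety of maximal tori recalled at the start of \S\ref{ffield}, every maximal $K_v$-torus of $G_i$ is conjugate over $K_v$ to one coming from a maximal $K$-torus of $G_i$. Consequently, the assumption that $G_1$ and $G_2$ have the same isogeny classes of maximal $K$-tori propagates to the statement that $G_1$ and $G_2$ have the same isogeny classes of maximal $K_v$-tori for every $v \in V^K$.

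The next step is to show $G_1 \simeq_{K_v} G_2$ for every $v$. For complex $v$ this is automatic. For nonarchimedean $v$, an extension of the argument in Lemma \ref{nonarch} applies: once the quasi-split inner form is fixed and the Killing--Cartan type lies outside the excluded list, the Tits index of an inner twist over $K_v$ is determined by its $K_v$-rank; the latter coincides for $G_1$ and $G_2$ because one can always realize a maximal $K_v$-split torus as the base change of a maximal $K$-torus, and hence as isogenous to some maximal $K$-torus of the other group. For real $v$, the corresponding statement for types $\tB_\ell$ and $\tC_\ell$ is read off from Lemmas \ref{B.R} and \ref{C.R} (the admissible triples $(\alpha,\beta,\gamma)$ recover $s$, hence the group), while for the remaining admissible types $\tD_{2\ell}$, $\tE_7$, $\tE_8$, $\tF_4$, $\tG_2$ one invokes the classification of real forms (e.g., via \cite{DjThang}): among inner twists of a fixed quasi-split real form, the isomorphism class is recovered from the collection of maximal $\R$-tori.

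The final step is to pass from local to global isomorphism. Since $G_1$ and $G_2$ are inner twists of the same quasi-split adjoint $K$-group $G_0$, they are classified by classes $\xi_1,\xi_2 \in H^1(K, G_0)$, and the preceding analysis shows that $\xi_1$ and $\xi_2$ have the same image in $H^1(K_v, G_0)$ for every $v$. The Hasse principle for $H^1$ of an absolutely simple adjoint group over a number field (which follows from the Kneser--Harder--Chernousov theorem for the simply connected cover together with the Hasse principle for the Brauer-type cohomology $H^2(K, Z)$ of the center) then yields $\xi_1 = \xi_2$ globally, hence $G_1 \simeq G_2$ as $K$-groups.

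The most delicate step is the local analysis over $\R$, and within it the case of type $\tD_{2\ell}$: the hypothesis that $G_1$ and $G_2$ are inner twists of the same quasi-split form is essential here, since without it outer twists of type $\tD_{2\ell}$ can have the same collection of maximal $\R$-tori as certain inner forms (this is precisely why the hypothesis is listed as an assumption for type $\tD$). For the exceptional types $\tE_7$, $\tE_8$, $\tF_4$, $\tG_2$ the verification is short, as the list of inner twists over a local field is very restrictive and is easily read off from Tits's classification tables.
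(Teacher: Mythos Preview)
For all types other than $\tD_{2\ell}$, your argument agrees with the paper's: having the same isogeny classes of maximal $K$-tori gives $\rk_{K_v} G_1 = \rk_{K_v} G_2$ for every $v$; for those types the $K_v$-rank determines the adjoint group over $K_v$ (the paper cites \cite[\S6]{PrRap:weakly} for this); and the Hasse principle for adjoint groups yields $G_1 \simeq G_2$ over $K$. Since $\mathrm{Out}$ is trivial for these types, ``isomorphic over $K_v$'' is the same as ``equal classes in $H^1(K_v,G_0)$'', so the last step is unproblematic.

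For type $\tD_{2\ell}$ the paper does \emph{not} attempt a local analysis. It instead chooses a \emph{generic} maximal $K$-torus $T_1\subset G_1$ with $\rk_{K_v}T_1=\rk_{K_v}G_1$ at every place where one of the $G_i$ fails to be quasi-split (Proposition~\ref{P:generic-tori}), takes a $K$-isogenous maximal $K$-torus $T_2\subset G_2$ (necessarily also generic), upgrades the isogeny to a $K$-isomorphism $T_1\to T_2$ that extends to a $\bar K$-isomorphism $G_1\to G_2$ via \cite[Lemma~4.3 and Remark~4.4]{PrRap:weakly}, and then concludes $G_1\simeq G_2$ over $K$ by \cite[Theorem~20]{G:outer}.

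Your local-global route for $\tD_{2\ell}$ has a genuine gap. Over a nonarchimedean $K_v$ the $K_v$-rank does \emph{not} determine the inner twist: the center of $\Spin_{4\ell}$ is $\mu_2\times\mu_2$, so $H^1(K_v,G_0)\cong(\Z/2)^2$, and the three nontrivial classes all have $K_v$-rank $2\ell-2$; for $\ell\ge 3$ they give two non-isomorphic groups (distinguished by whether the vector Tits algebra is split), while for $\ell=2$ triality makes them all isomorphic but they remain three distinct cocycle classes. More seriously, even if you established $G_1\simeq_{K_v}G_2$ for every $v$, the Hasse principle you invoke needs $\xi_{1,v}=\xi_{2,v}$ in $H^1(K_v,G_0)$; an abstract local isomorphism only says these classes lie in the same $\mathrm{Out}(G_0)$-orbit, and since $\mathrm{Out}$ is nontrivial for $\tD_{2\ell}$ this does not give equality, so injectivity of $H^1(K,G_0)\to\prod_v H^1(K_v,G_0)$ cannot be applied. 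The appeal to \cite{DjThang} for the real case does not close this gap either: that reference classifies maximal $\R$-tori but does not assert that the collection of such tori pins down the cocycle class among inner twists of a fixed quasi-split form. The paper's global argument with generic tori and \cite{G:outer} is designed precisely to sidestep this obstacle.
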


\begin{proof}
First, suppose that the groups are not of type $\tD$. As we have
seen in \S~\ref{ffield}, the fact that $G_1$ and $G_2$ have the same
isogeny classes of maximal $K$-tori implies that $\rk_{K_v}
  G_1 = \rk_{K_v}  G_2$ for all $v \in V^K$. For groups of
one of the types under consideration, this implies that $G_1 \simeq
G_2$ over $K_v$ for all $v \in V^K$ and then our assertion follows
from the Hasse principle for Galois cohomology of adjoint groups
(see \cite[\S~6]{PrRap:weakly} for details of the argument).

Now, suppose the groups are of type $\tD_{2\ell}$ for some $\ell
\geqslant 2$. There exists a maximal $K$-torus $T_1$ of $G_1$ that
is generic and such that $\rk_{K_v}   T_1 =
\rk_{K_v}   G_1$ at every place $v$ where at least one of
$G_1$ or $G_2$ is not quasi-split. (Note that the set of such $v$'s
is finite, cf.\ \cite[Theorem 6.7]{PlatRap}.) By hypothesis, $T_1$ is
isogenous to a maximal $K$-torus $T_2$ of $G_2$, which is
necessarily also generic. Following Lemma 4.3 and Remark 4.4 in
\cite{PrRap:weakly}, one finds a $K$-isomorphism $T_1 \to T_2$ that
extends to a $\bar{K}$-isomorphism $G_1 \to G_2$. Then our assertion
follows from Theorem 20 in \cite{G:outer}.
\end{proof}

\begin{proof}[Proof of Theorem \ref{rank2}] The ``if'' direction
is actually contained in Corollary \ref{C:BC-twins}---see Remark
\ref{R:L=2}. For the ``only if'' direction, we first observe that if
$G_1$ and $G_2$ have the same isomorphism classes of maximal
$K$-tori then by Lemma \ref{odd.similar} the groups $\SO(q_1)$ and
$\SO(q_2)$ are isomorphic, hence the forms $q_1$ and $q_2$ are
similar, yielding assertion (1). Thus, we can assume that $G_1 =
\SO(q)$ and $G_2 = \Spin(q)$ for a single quadratic form $q$.

To prove assertion (2), it is enough to show that if $v \in V^K$ is
such that the Witt index of $q$ over $K_v$ is 1 then there exists a
2-dimensional $K_v$-torus $T_1$ that has a $K_v$-embedding into
$G_1$ but does not allow a $K_v$-embedding into $G_2$. For this we
pick a quadratic extension $L/K_v$ and set
$$
T_1 = \GL_1 \times \mathrm{R}^{(1)}_{L/K_v}(\GL_1).
$$
We can write $q = q' \perp q''$ where $q'$ is a hyperbolic plane.
Then $\SO(q') = \GL_1$ and $\SO(q'') = \mathrm{PSL}_{1 , D}$
where $D$ is a quaternion division algebra over $K_v$. Since $L$
embeds in $D$, the torus $\mathrm{R}^{(1)}_{L/K_v}(\GL_1)$
embeds in $SL_{1 , D}$ and then also in $\mathrm{PSL}_{1 , D}$. It
follows that $T_1$ embeds in $G_1 = \SO(q)$. On the other hand, let
$T_2 \subset G_2$ be a maximal $K_v$-torus that splits over $L$. We
can identify $G_2$ with $\mathrm{SU}(A , \tau)$ where $A = M_2(D)$
with $D$ a quaternion division algebra over $K$ and $\tau$ is a
symplectic involution on $A$. Let $E_2$ be the $K_v$-subalgebra of
$A$ generated by $T_2(K_v)$. Then $E_2 \otimes_{K_v} L \simeq L^4$.
As in \S~\ref{S:real}, we conclude that $(E_2 , \tau \vert_{E_2})$ is
isomorphic to $(L , \sigma) \times (L , \sigma)$ where $\sigma$ is
the nontrivial automorphism of $L$, or to $(L \times L , \lambda)$
where $\lambda$ is the switch involution. Then $T_2 =
\mathrm{SU}(E_2 , \tau \vert_{E_2})$ is isomorphic respectively to
$\mathrm{R}^{(1)}_{L/K_v}(\GL_1)^2$ or
$\mathrm{R}_{L/K_v}(\GL_1)$. Neither such torus can be
isomorphic to $T_1$.
\end{proof}

\section{Alternative proofs via Galois cohomology}

Although the main body of the paper demonstrates the effectiveness
(and in fact the ubiquity)  of the technique of \'etale algebras in
dealing with maximal tori of classical groups, it is worth pointing
out that some parts of the argument can also be given in the
language of Galois cohomology of algebraic groups. In this section, we will illustrate such an exchange by giving a cohomological proof of the ``if'' direction of Theorem \ref{BC}(2), i.e., of Corollary \ref{C:BC-twins}\eqref{twin.same}.  

Our main tool is Proposition \ref{A2}, for which we need some notation.
Let $G$ be a connected semi-simple algebraic group
over a number field $K$. Fix a maximal $K$-torus $T$ of $G$, and let
$N = N_G(T)$ and $W = N/T$ denote respectively its normalizer and
the corresponding Weyl group. For any field extension $P/K$, we let
$\theta_P \colon H^1(P , N) \to H^1(P , W)$ denote the map induced
by the natural $K$-morphism $N \to W$, and let
$$
\sC(P) := \Ker \left(H^1(P , N) \longrightarrow H^1(P
, G) \right);
$$
its elements are in
one-to-one correspondence with the $G(P)$-conjugacy classes of
maximal $P$-tori in $G$, see for example \cite[Lemma 9.1]{PrRap:weakly} where this
correspondence is described explicitly.
 There is an obvious
$K$-defined map $W \to \mathrm{Aut}\: T$, so for any $\xi \in H^1(K
, W)$ one can consider the corresponding twisted $K$-torus
${}_{\xi}T$.

\begin{prop} \label{A2}
 Assume that there exists a
subset $V_0 \subset V_{\infty}^K$ such that $G$ is $K_v$-anisotropic
for all $v \in V_0$ and is $K_v$-split for all $v \in V^K \setminus
V_0$. Then the sequence
\begin{equation} \label{A2.1}
\begin{CD}
\sC(K) @>{\theta_K}>> H^1(K, W) @>{\prod \rho_v}>>
\prod_{v \in V_0} H^1(K_v , W)
\end{CD}
\end{equation}
is exact.
\end{prop}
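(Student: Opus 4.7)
The plan is to establish the two inclusions separately, with the reverse inclusion being the substantive part. For the easy inclusion $\operatorname{Im}(\theta_K) \subseteq \ker(\prod \rho_v)$, fix $v \in V_0$. Since $V_0 \subseteq V^K_\infty$ and $G$ is $K_v$-anisotropic by hypothesis, the real Lie group $G(K_v)$ is compact; by the classical theorem that all maximal tori in a compact connected Lie group are conjugate, $\sC(K_v) = \{[T \otimes K_v]\}$ is a single class, and under $\theta_{K_v}$ it maps to the trivial element of $H^1(K_v, W)$. Functoriality of $\theta$ under localization gives $\rho_v(\theta_K(\eta)) = 0$ for every $\eta \in \sC(K)$ and every $v \in V_0$.

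For the reverse inclusion, take $\xi \in H^1(K, W)$ with $\rho_v(\xi) = 0$ for every $v \in V_0$. The strategy is to use the connecting map $\delta \colon H^1(K, W) \to H^2(K, T)$ arising from the exact sequence $1 \to T \to N \to W \to 1$. I would first show $\delta(\xi) = 0$: at $v \in V_0$ this is immediate since $\xi_v = 0$; at $v \notin V_0$, the group $G$ is $K_v$-split, and by the rationality of the variety of maximal $K_v$-tori (a version of Steinberg's theorem for quasi-split groups) every class in $H^1(K_v, W)$ is realized by a maximal $K_v$-torus of $G$, forcing $\delta_v$ to vanish identically on $H^1(K_v, W)$. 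A Poitou--Tate duality argument then globalizes the local vanishings to $\delta(\xi) = 0$, producing a lift $\eta_0 \in H^1(K, N)$ of $\xi$.

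The main obstacle is the next step: modifying $\eta_0$ within its $\theta_K$-fiber so that it vanishes in $H^1(K, G)$, i.e.\ lies in $\sC(K)$. This fiber is a principal homogeneous space under (a quotient of) $H^1(K, T_\xi)$, where $T_\xi := {}_\xi T$, and the correction is an element of $H^1(K, T_\xi)$ cancelling the $H^1(K, G)$-image of $\eta_0$. Locally such corrections exist -- at $v \in V_0$ the localization $(\eta_0)_v$ already dies in $H^1(K_v, G)$ by the compactness argument, and at $v \notin V_0$ the splitness of $G$ provides a witness in $\sC(K_v)$ -- and the technical crux is a second Poitou--Tate computation for $T_\xi$ that assembles these into a single global correction. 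The resulting $\eta \in \sC(K)$ then satisfies $\theta_K(\eta) = \xi$, completing the argument.
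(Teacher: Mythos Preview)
Your easy inclusion agrees with the paper. For the hard inclusion there is a genuine gap. First a technical point: the obstruction to lifting $\xi \in H^1(K, W)$ to $H^1(K, N)$ lies in $H^2(K, {}_\xi T)$, not $H^2(K, T)$, since $T$ is not central in $N$; the torus whose arithmetic you must control depends on $\xi$. More importantly, your two invocations of ``Poitou--Tate duality'' do not suffice. For a torus $S$ over a number field one has $\Ker\bigl(H^2(K, S) \to \prod_v H^2(K_v, S)\bigr)$ dual to $\Ker\bigl(H^1(K, X^*(S)) \to \prod_v H^1(K_v, X^*(S))\bigr)$, and this is generally nonzero, so local vanishing of the obstruction does not give global vanishing for free. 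The missing idea is this: since $\rho_{v_0}(\xi) = 0$ for each $v_0 \in V_0$, the twisted torus ${}_\xi T$ is $K_{v_0}$-isomorphic to $T$ and hence $K_{v_0}$-anisotropic (as $G$ is); a torus anisotropic at a real place has $\Ker\bigl(H^2(K,\cdot) \to \prod_v H^2(K_v,\cdot)\bigr) = 0$ (Prop.~6.12 of \cite{PrRap:weakly}). You have not used this, and without it neither of your globalization steps goes through. (When $V_0 = \emptyset$ this argument is unavailable, but then $G$ is $K$-split by the Hasse principle and one quotes Gille--Kottwitz--Raghunathan directly.)

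The paper's proof first disposes of $V_0 = \emptyset$ as just indicated, then reduces to $G$ simply connected, and applies a single local-to-global result (Theorem~9.2 of \cite{PrRap:weakly}) whose hypothesis is precisely the vanishing of $\Ker\bigl(H^2(K, {}_\xi T) \to \prod_v H^2(K_v, {}_\xi T)\bigr)$, verified via the real-anisotropic observation above, and whose conclusion is that $\rho_v(\xi) \in \theta_{K_v}(\sC(K_v))$ for all $v$ forces $\xi \in \theta_K(\sC(K))$. Your two-step outline (lift to $H^1(K,N)$, then correct into $\sC(K)$) is essentially what lies behind that theorem, so the overall shape is compatible with the paper's; the substantive gap is the unidentified arithmetic input that makes the local-to-global step valid.
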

Here $\rho_v$ denotes the natural restriction map $H^1(K, W) \ra H^1(K_v, W)$.

\begin{proof}
If $V_0$ is empty then
it follows from the Hasse principle for adjoint groups \cite[Theorem
6.22]{PlatRap} that $G$ is $K$-split. In this case it was shown by Gille
\cite{Gille:tori} and Raghunathan \cite{Ragh:tori} (or earlier by Kottwitz \cite{Kott:rational}) that
$\theta_K(\sC(K)) = H^1(K , W)$, and our claim follows. So, we will assume in the rest
of the argument that $V_0$ is not empty.

We first prove that $\rho_v \theta_K = 0$ for all $v \in V_0$.
Given $\xi \in \sC(K)$, one can pick $g \in
G(\bar{K})$ such that $n(\sigma) := g^{-1} \sigma(g)$ belongs to
$N(\bar{K})$ for all $\sigma \in \mathrm{Gal}(\bar{K}/K)$, and the
cocycle $\sigma \mapsto n(\sigma)$ represents $\xi$. Then the maximal torus
$T' = gTg^{-1}$ is defined over $K$. Now, let $v \in V_0$. According
to our definitions, $G$ is anisotropic over $K_v = \mathbb{R}$, so
it follows from the conjugacy of maximal tori in compact Lie groups
that $T$ and $T'$ are conjugate by an element of $G(K_v)$. Then the
one-to-one correspondence between the elements of $\sC(K_v)$
and the $G(K_v)$-conjugacy classes of maximal $K_v$-tori in $G$ (or
a simple direct computation) implies that the image of $\xi$ under
the restriction map $\sC(K) \to \sC(K_v)$ is
trivial, and hence the image of $\theta_K(\xi)$ under the
restriction map $H^1(K , W) \to H^1(K_v , W)$ is trivial as well. 

Now suppose that $G$ is simply connected; we verify that every $\xi \in \cap_{v\in V_0} \ker \rho_v$ is in the image of $\theta_K$.
Pick $v \in V_0$.
Since $\xi$ lies in the kernel of $H^1(K , W) \to H^1(K_v , W)$, the
twisted torus ${}_{\xi}T$ is $K_v$-isomorphic to $T$, hence $K_v$
anisotropic (as $G$ is $K_v$-anisotropic). Thus, 
\[
\Ker \left(H^2(K , {}_{\xi}T) \to \prod\nolimits_{v \in V^K} H^2(K_v ,
{}_{\xi}T)\right) = 0
\] 
by \cite[Prop.~6.12]{PrRap:weakly}.
Invoking now \cite[Th.~9.2]{PrRap:weakly}, we see that
to prove the inclusion $\xi \in \theta_K(\sC(K))$, it is
enough to show that $\rho_v(\xi) \in \theta_{K_v}(\sC(K_v))$
for all $v \in V^K$. If $v \in V_0$ then by construction
$\rho_v(\xi)$ is trivial, and there is nothing to prove. Otherwise,
the group $G$ is $K_v$-split, so by the result of
Gille-Kottwitz-Raghunathan we have $\theta_{K_v}(\sC(K_v)) =
H^1(K_v , W)$, and the inclusion $\rho_v(\xi) \in
\theta_{K_v}(\sC(K_v))$ is obvious. Since $\xi$ was arbitrary, we have proved that $\cap \ker \rho_v$ contains the image of $\theta_K$.

In case $G$ is not simply connected, we fix a $K$-defined universal cover $\pi \colon \Gt \to G$ of $G$ and use the tilde
to denote the objects associated with $\Gt$. Then $\pi$ yields
a $K$-isomorphism of $\Wt$ and $W$ and we have a commutative diagram
\[
\begin{CD}
\widetilde{\sC}(K) @>{\widetilde{\th}_K}>> H^1(K, \Wt) @>{\prod \widetilde{\rho}_v}>> \prod_{v \in V_0} H^1(K_v, \Wt) \\
@VVV @| @| \\
\sC(K) @>{\th_K}>> H^1(K, W) @>{\prod \rho_v}>> \prod_{v \in V_0} H^1(K_v, W).
\end{CD}
\]
The top row is exact by the previous paragraph, hence $\cap \ker \rho_v$ contains the image of $\theta_K$.
\end{proof}

We now begin to work our way towards the proof of Theorem \ref{BC}(2)/Corollary \ref{C:BC-twins}\eqref{twin.same}. Let
$G_1$ be adjoint of type $\tB_\ell$ and let $G_2$ be simply connected of type $\tC_\ell$ for some $\ell \ge 2$.  We will use a
subscript $i \in \{ 1 , 2 \}$ to denote the objects associated with
$G_i$. In particular, we let $T_i$ denote a maximal torus of $G_i$,
and let $N_i = N_{G_i}(T_i)$ and $W_i = N_i/T_i$ be its normalizer
and the Weyl group. Then $W_i$ naturally acts on $T_i$ by
conjugation. We say that the morphisms of algebraic groups $\varphi
\colon T_1 \to T_2$ and $\psi \colon W_1 \to W_2$ are {\it
compatible} if
$$
\varphi(w \cdot t) = \psi(w) \cdot \varphi(t) \quad \text{for all 
$t \in T_1, \ w \in W_1$}.
$$

\begin{lem} \label{A4}
One can pick maximal $K$-tori $T_i$
of $G_i$ for $i = 1, 2$ so that there exist compatible $K$-defined
isomorphisms
$$
\varphi \colon T_1 \to T_2 \ \ \ \text{\it and} \ \ \ \psi \colon
W_1 \to W_2.
$$
\end{lem}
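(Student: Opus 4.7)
The plan is to construct both tori from a shared piece of data—an \'etale $K$-algebra with involution—exploiting the combinatorial coincidence that the root lattice $Q(\tB_\ell)$ equals the weight lattice $P(\tC_\ell)$ in standard coordinates on $\Z^\ell$, and that $W(\tB_\ell) = W(\tC_\ell) = (\Z/2)^\ell \rtimes S_\ell$ act identically on $\Z^\ell$ as the hyperoctahedral group of signed permutations. This identification will furnish both $\varphi$ and $\psi$ once the tori are realized by $K$-defined subgroups of $G_1$ and $G_2$.

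First I would choose a $2\ell$-dimensional \'etale $K$-algebra with involution $(E , \sigma)$ satisfying \eqref{E:dimen}, in the form $(F[\delta]/(\delta^2 - d), \theta)$ of Example \ref{quad.etale}. Using the descriptions in \S\ref{Stbg.sec}, I would arrange for $(E , \sigma)$ to embed into $(A_2 , \tau_2)$ and for $(E \times K , \sigma \times \mathrm{id}_K)$ to embed into $(A_1 , \tau_1)$; these produce maximal $K$-tori $T_2 \subset G_2$ and $T_1 \subset G_1$ respectively.

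Next I would identify both tori explicitly. For any $e \in E^\times$ with $e\sigma(e) = 1$, one has $N_{E/F}(e) = e\sigma(e) = 1$ and hence $N_{E/K}(e) = N_{F/K}(N_{E/F}(e)) = 1$ automatically. Consequently the ``determinant one'' condition cutting $T_1$ out of $\mathrm{R}_{E \times K/K}(\GL_1) \cap G_1$ is absorbed, and both $T_1$ and $T_2$ are canonically $K$-isomorphic to the norm-one torus $\mathrm{R}^{(1)}_{E/F}(\GL_1)$ with $F = E^\sigma$. This yields $\varphi \colon T_1 \to T_2$. The Weyl groups $W_i = N_{G_i}(T_i)/T_i$ each act on the common cocharacter lattice $X_*(T_i) \cong \Z^\ell$ as the full signed-permutation group—this is the standard description of Weyl groups of $\SO$ and $\Sp$ in terms of an \'etale-algebra-adapted basis—so these two actions coincide under $\varphi$, giving the $K$-defined isomorphism $\psi \colon W_1 \to W_2$, compatible with $\varphi$ by construction.

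The main obstacle is the existence of a single $(E , \sigma)$ admitting both embeddings. By Lemmas \ref{L:sympl} and \ref{LG.lem}, this reduces to producing local embeddings at every $v \in V^K$. At places where $G_i$ is split, Propositions \ref{B.split} and \ref{C.split} furnish the embeddings unconditionally for any $(E , \sigma)$ satisfying \eqref{E:dimen}. At real places where $G_1$ or $G_2$ is anisotropic, Table \ref{R.tori} forces $(E \otimes_K K_v , \sigma \otimes \mathrm{id}_{K_v}) \cong (\C , \bar{\ })^\ell$; this can be arranged by choosing $F$ to be totally real of degree $\ell$ split at the specified real places, with $d \in F^\times$ negative there. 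The remaining non-split non-archimedean places impose only mild further constraints that can be met simultaneously, so such an $(E , \sigma)$ exists.
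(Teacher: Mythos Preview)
Your strategy—building both tori from a single \'etale algebra $(E,\sigma)$ and reading off $\varphi$ and $\psi$ from the signed-permutation action—is sound, and your computation that $T_1\cong\SU(E,\sigma)\cong T_2$ (the $K$-factor contributes only the identity once the norm condition is unwound) is correct. But the proposal has a real gap in the existence step for $(E,\sigma)$. You only control $(E\otimes K_v,\sigma)$ at archimedean places where one of the groups is \emph{anisotropic}; at a real place where, say, $G_2=\Sp(r,\ell-r)$ with $0<r<\ell$ and $G_1$ is split, Lemma~\ref{C.R} forbids any $(\R\times\R,\text{switch})$ factor in $E\otimes K_v$, which your $F,d$ need not avoid. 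At nonarchimedean $v$ where $A_2\otimes K_v\cong M_\ell(D)$ with $D$ division, the embedding of $(E\otimes K_v,\sigma)$ imposes genuine conditions that ``mild further constraints'' does not discharge. Finally, your appeal to Lemma~\ref{LG.lem} fails when $K$ is totally imaginary and $\ell\ge 3$: neither hypothesis \eqref{LG.deg5} nor \eqref{LG.real} is available. (With $F$ a field you could instead invoke Proposition~\ref{P:local-global} directly after the odd-to-even reduction, but you did not say this.)

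The paper avoids all of this. It first produces a quadratic extension $L/K$ splitting both $G_1$ and $G_2$—possible because over any local field a non-split group of type $\tB_\ell$ or $\tC_\ell$ splits over \emph{every} quadratic extension—and then takes $T_i=B_i\cap B_i^\sigma$ for an $L$-Borel $B_i$ chosen opposite to its Galois conjugate (citing \cite[Lemma~6.17]{PlatRap}). The nontrivial element $\sigma\in\Gal(L/K)$ then acts on $X(T_i)$ as $-1$; since $-1$ is central in $\GL(X(T_i))$, \emph{any} abstract isomorphism $X(T_2)\to X(T_1)$ compatible with the Weyl groups (which exists by inspection of the $\tB_\ell$/$\tC_\ell$ root data) is automatically Galois-equivariant, and $\sigma$ acts trivially on $W_i$. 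No local-global principle is needed. Amusingly, the paper's torus $T_i$ is exactly $(R^{(1)}_{L/K}\GL_1)^\ell=\SU((L,\bar{\ })^\ell)$, so had you taken $F=K^\ell$ and $E=L^\ell$ for this $L$—rather than $F$ a field—your framework would recover the same tori; but then you would have to verify the local embeddings and condition~$(\diamond)$ by hand, which is more work than the paper's direct argument.
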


\begin{proof}
Imitating the argument given in \cite[Proposition 6.16]{PlatRap}, it is
easy to see that there exists a quadratic extension $L/K$ that
splits {\it both} $G_1$ and $G_2$. Indeed, let $V_i$ be the (finite)
set of places $v \in V^K$ such that $G_i$ does not split over $K_v$,
and let $V = V_1 \cup V_2$. Pick a quadratic extension $L/K$ so that
the local degree $[L_w : K_v] = 2$ for all $v \in V$ and $w \vert
v$. We claim that $L$ is as required. By the Hasse principle, it is
enough to show that both $G_1$ and $G_2$ split over $L_w$ for any $w
\in V^L$. For a given $w$, we let $v \in V^K$ be the place that lies
below $w$. If $v \notin V$ then by our construction $G_1$ and $G_2$
split already over $K_v$, and there is nothing to prove. If $v \in
V$ then $[L_w : K_v] = 2$, and then the proof of \cite[Proposition
6.16]{PlatRap} that $G_1$ and $G_2$ split over $L_w$, as required. 

Now,
let $\sigma \in \mathrm{Gal}(L/K)$ be a generator. According to
\cite[Lemma 6.17]{PlatRap}, for each $i \in \{ 1 , 2 \}$, there exists
an $L$-defined Borel subgroup $B_i$ of $G_i$ such that $T_i := B_i
\cap B_i^{\sigma}$ is a maximal $K$-torus of $G_i$ that splits over
$L$. Considering the action of $\sigma$ on the root system $\Phi(G_i
, T_i)$, we see that it takes the system of positive roots
corresponding to $B_i$ into the system of negative roots. For groups
of types $\textsf{B}_{\ell}$ and $\textsf{C}_{\ell}$, this implies
that $\sigma$ acts on the character group $X(T_i)$ as multiplication
by $(-1)$. It easily follows from the description of the
corresponding root systems (cf.\ \cite{Bou:g4}) that there exist
compatible (in the obvious sense) isomorphisms $\varphi^* \colon
X(T_2) \to X(T_1)$ (of abelian groups) and $\psi \colon W_1 \to W_2$
(of abstract groups considered as subgroups of $GL(X(T_1))$ and
$GL(X(T_2))$). Then $\varphi^*$ gives rise to an isomorphism
$\varphi \colon T_1 \to T_2$ of algebraic groups that is compatible
(as defined above) with $\psi$ (which can be considered as a
morphism of algebraic groups). It remains to observe that since
$\sigma$ acts on $X(T_1)$ and $X(T_2)$ as multiplication by $(-1)$,
both $\varphi$ and $\psi$ are $K$-defined (in fact, $\sigma$ acts on
$W_1$ and $W_2$ trivially).
\end{proof}

\begin{rmk*} If both groups $G_1$ and $G_2$ are $K$-split
then one can, of course, take for  $T_1$ and $T_2$ their maximal
$K$-split tori.
\end{rmk*}

For the rest of the paper, we fix compatible $K$-defined
isomorphisms
$$
\varphi^0 \colon T_1^0 \to T_2^0 \ \ \text{and} \ \ \psi^0 \colon
W_1^0 \to W_2^0.
$$
(Thus, we henceforth slightly change the notations used in Lemma
\ref{A4}.) Given {\it arbitrary} maximal $K$-tori $T_i$ of $G_i$ for $i
= 1, 2$, we pick elements $g_i \in G(\bar{K})$ so that
$$
T_i = g_i T_i^0 g_i^{-1},
$$
and then for any $\sigma \in \mathrm{Gal}(\bar{K}/K)$, the element
$n_i(\sigma) := g_i^{-1} \sigma(g_i)$ belongs to $N_i^0(\bar{K})$.
Let $\varphi = \varphi(g_1 , g_2)$ be the morphism $T_1 \to T_2$
defined by
$$
\varphi(t) = g_2 \varphi^0(g_1^{-1} t g_1) g_2^{-1},
$$
and let $\nu_i^0 \colon N_i^0 \to W_i^0$ denote the canonical
morphism.

\begin{lem} \label{A5}
 If
\begin{equation}\label{E:Append2}
\psi^0(\nu_1^0(n_1(\sigma))) = \nu_2^0(n_2(\sigma)) \ \ \text{for
all} \ \ \sigma \in \mathrm{Gal}(\bar{K}/K)
\end{equation}
then $\varphi = \varphi(g_1 , g_2)$ is defined over $K$.
\end{lem}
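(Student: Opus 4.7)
The plan is to verify that $\sigma(\varphi) = \varphi$ for every $\sigma \in \Ga(\Kalg/K)$, where $\sigma(\varphi)$ denotes the Galois-conjugate morphism $t \mapsto \sigma(\varphi(\sigma^{-1}(t)))$; since $\varphi$ is already a morphism of $\Kalg$-varieties, this equality is exactly what is needed to conclude that it descends to $K$. The whole argument is a direct computation in which the two pieces of data, namely the compatibility of $\varphi^0$ with $\psi^0$ and the hypothesis \eqref{E:Append2}, conspire so that the conjugations introduced by the Galois action on $g_1$ and $g_2$ cancel against each other.

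Concretely, first I would substitute $\sigma(g_i) = g_i n_i(\sigma)$ (and use that $\varphi^0$ is $K$-defined, so $\sigma(\varphi^0)=\varphi^0$) to obtain
$$
\sigma(\varphi)(t) \;=\; g_2\, n_2(\sigma)\, \varphi^0\!\bigl(n_1(\sigma)^{-1}\,(g_1^{-1} t g_1)\, n_1(\sigma)\bigr)\, n_2(\sigma)^{-1}\, g_2^{-1}.
$$
Writing $u := g_1^{-1} t g_1 \in T_1^0(\Kalg)$ and $w_i(\sigma) := \nu_i^0(n_i(\sigma)) \in W_i^0$, the inner expression becomes the Weyl-group translate $w_1(\sigma)^{-1} \cdot u$.

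Next I would invoke the compatibility of $\varphi^0$ and $\psi^0$ to transfer the Weyl action across: $\varphi^0(w_1(\sigma)^{-1} \cdot u) = \psi^0(w_1(\sigma))^{-1} \cdot \varphi^0(u)$. The hypothesis \eqref{E:Append2} says exactly $\psi^0(w_1(\sigma)) = w_2(\sigma)$, so this simplifies to $n_2(\sigma)^{-1} \varphi^0(u)\, n_2(\sigma)$. Plugging back in, the factors of $n_2(\sigma)$ and $n_2(\sigma)^{-1}$ cancel the ones introduced from $\sigma(g_2)$, yielding $\sigma(\varphi)(t) = g_2 \varphi^0(g_1^{-1} t g_1) g_2^{-1} = \varphi(t)$, as required.

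There is no genuine obstacle here: everything is bookkeeping once one unwinds the definitions. The only minor subtlety is to keep careful track of sign/inverse conventions in the Weyl-group action (conjugation by $n_i(\sigma)^{-1}$ versus $n_i(\sigma)$) so that the cancellation in the final step is unambiguous; this is taken care of by using the same convention on both sides when invoking the compatibility of $\varphi^0$ with $\psi^0$.
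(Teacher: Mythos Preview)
Your proof is correct and follows essentially the same approach as the paper: both arguments substitute $\sigma(g_i) = g_i n_i(\sigma)$, use that $\varphi^0$ is $K$-defined, rewrite the resulting conjugations as Weyl-group actions, and then apply the compatibility of $\varphi^0$ with $\psi^0$ together with the hypothesis \eqref{E:Append2} to obtain cancellation. The only cosmetic difference is that the paper verifies $\sigma(\varphi(t)) = \varphi(\sigma(t))$ directly, whereas you verify the equivalent statement $\sigma(\varphi)(t) = \varphi(t)$ for the Galois-conjugate morphism.
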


\begin{proof}
We need to show that $\varphi$ commutes with every $\sigma \in
\mathrm{Gal}(\bar{K}/K)$. Since $\varphi^0$ is defined over $K$, for
any $t \in T_1(\bar{K})$, we have
\begin{align*}
\sigma(\varphi(t)) &= \sigma(g_2) \varphi^0(\sigma(g_1)^{-1}
\sigma(t) \sigma(g_1))\sigma(g_2)^{-1}\\
&= g_2 n_2(\sigma) \varphi^0(n_1(\sigma)^{-1} g_1^{-1} \sigma(t) g_1
n_1(\sigma)) n_2(\sigma)^{-1} g_2^{-1}\\
&= g_2 \, [ \,(\nu_2^0(n_2(\sigma))) \cdot
\varphi^0((\nu_1^0(n_1(\sigma))) \cdot (g_1^{-1} \sigma(t) g_1)) \,
] \, g_2^{-1}.
\end{align*}
Since $\varphi^0$ is compatible with $\psi^0$, condition
(\ref{E:Append2}) implies that the latter reduces to
$$
g_2 \varphi^0(g_1^{-1} \sigma(t) g_1) g_2^{-1} = \varphi(\sigma(t)).
$$
It follows that $\sigma(\varphi(t)) = \varphi(\sigma(t))$, i.e.
$\varphi$ commutes with $\sigma$, as required.
\end{proof}

Pursuant to the above notations, for an extension $P/K$ and $i = 1,
2$, we set
$$
\sC_i(P) = \Ker \left(H^1(P , N_i^0) \to H^1(P ,
G_i)\right),
$$
and let $\theta_{i P} \colon H^1(P, N_i^0) \to H^1(P , W_i^0)$
denote the canonical map (induced by $\nu_i$). The isomorphism
$H^1(K , W_1^0) \to H^1(K , W_2^0)$ induced by $\psi^0$ will still
be denoted by $\psi^0$.

\begin{lem} \label{A6}
Assume that
\begin{equation}\label{E:Append3}
\psi^0(\sC_1(K)) = \sC_2(K).
\end{equation}
Then for $i = 1$ or $2$, given any maximal $K$-torus $T_i$ of $G_i$
and an element $g_i \in G_i(\bar{K})$ such that $T_i = g_iT_i^0
g_i^{-1}$, there exists $g_{3 - i} \in G_{3 - i}(\bar{K})$ such that
the maximal torus
$$
T_{3-i} := g_{3-i} T_{3-i}^0 g_{3 - i}^{-1}
$$
and the isomorphism
$$
\varphi(g_1 , g_2) \colon T_1 \to T_2
$$
are $K$-defined. Thus, in this case $G_1$ and $G_2$ have the same
isomorphism classes of maximal $K$-tori.
\end{lem}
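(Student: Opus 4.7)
My plan is to derive the hypothesis of Lemma \ref{A5} from the equality \eqref{E:Append3}, which reduces Lemma \ref{A6} to the previous lemma. Take $i = 1$ (the case $i = 2$ is symmetric, as the final paragraph will explain). Given $g_1 \in G_1(\overline{K})$ with $T_1 = g_1 T_1^0 g_1^{-1}$, the formula $n_1(\sigma) = g_1^{-1} \sigma(g_1)$ defines a continuous cocycle in $N_1^0(\overline{K})$ whose class $\xi_1 \in H^1(K , N_1^0)$ lies in $\sC_1(K)$ (its image in $H^1(K, G_1)$ is trivialized by $g_1$). Invoking \eqref{E:Append3}, I obtain $\xi_2 \in \sC_2(K)$ whose image under $\theta_{2K}$ in $H^1(K , W_2^0)$ equals $\psi^0(\theta_{1 K}(\xi_1))$, and I represent $\xi_2$ by some cocycle $m(\sigma) \in N_2^0(\overline{K})$.

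The main obstacle is that this $m$ generally fails to satisfy the pointwise condition \eqref{E:Append2}: a priori the two $W_2^0$-valued cocycles $\nu_2^0(m(\sigma))$ and $\psi^0(\nu_1^0(n_1(\sigma)))$ are only cohomologous, not equal. To fix this, I will absorb the coboundary into the $N_2^0$-cocycle. By hypothesis there exists $w \in W_2^0(\overline{K})$ with
\[
\psi^0(\nu_1^0(n_1(\sigma))) = w \cdot \nu_2^0(m(\sigma)) \cdot \sigma(w)^{-1} \quad \text{for all } \sigma \in \mathrm{Gal}(\overline{K}/K).
\]
Picking any lift $h \in N_2^0(\overline{K})$ of $w$ under the surjection $\nu_2^0 \colon N_2^0 \to W_2^0$, I set $n_2(\sigma) := h \, m(\sigma) \, \sigma(h)^{-1}$. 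This is a cohomologous cocycle in $N_2^0(\overline{K})$, so it still represents $\xi_2$, but now $\nu_2^0(n_2(\sigma)) = \psi^0(\nu_1^0(n_1(\sigma)))$ holds on the nose.

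Since $\xi_2 \in \sC_2(K)$, the cocycle $n_2$ becomes trivial in $H^1(K , G_2)$, so there exists $g_2 \in G_2(\overline{K})$ with $g_2^{-1} \sigma(g_2) = n_2(\sigma)$. Then $T_2 := g_2 T_2^0 g_2^{-1}$ is automatically a $K$-defined maximal torus of $G_2$ (Galois-invariance follows from $n_2(\sigma) \in N_2^0$), and Lemma \ref{A5} applied to $(g_1, g_2)$ yields that $\varphi(g_1, g_2) \colon T_1 \to T_2$ is a $K$-defined isomorphism of tori. For the concluding ``Thus'' claim, I will rerun the same argument starting from an arbitrary maximal $K$-torus of $G_2$: because $\psi^0$ is an isomorphism, the hypothesis \eqref{E:Append3} is symmetric in the two indices (equivalently, $(\psi^0)^{-1}$ sends $\sC_2(K)$ onto $\sC_1(K)$), so every maximal $K$-torus of $G_2$ is also $K$-isomorphic to some maximal $K$-torus of $G_1$, giving the asserted equality of isomorphism classes.
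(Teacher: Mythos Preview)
Your proof is correct and follows essentially the same approach as the paper's: both represent the given torus by a cocycle $n_1 \in \sC_1(K)$, invoke \eqref{E:Append3} to find a matching class in $\sC_2(K)$, absorb the $W_2^0$-coboundary by multiplying by a lift in $N_2^0$, and then apply Lemma \ref{A5}. The only cosmetic difference is ordering: the paper first trivializes in $G_2$ (writing $m_2(\sigma) = h_2^{-1}\sigma(h_2)$) and then right-multiplies $h_2$ by the lift $z_2$, whereas you first adjust the $N_2^0$-cocycle and then trivialize in $G_2$; the resulting $g_2$ and $n_2$ agree.
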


\begin{proof}
To keep our notations simple, we will give an argument for $i = 1$
(the argument in the case $i = 2$ is totally symmetric). As above,
we set $n_1(\sigma) = g_1^{-1} \sigma(g_1) \in N_1^0(\bar{K})$ for
$\sigma \in \mathrm{Gal}(\bar{K}/K)$, observing that these elements
define a cohomology class $n_1 \in \sC_1(K).$ Then
\eqref{E:Append3} implies that there exists $h_2 \in G_2(\bar{K})$
such that for the cohomology class $m_2 \in \sC_2(K)$
defined by the elements $m_2(\sigma) = h_2^{-1} \sigma(h_2) \in
N_2^0(\bar{K})$, we have
$$
\psi^0(\theta_{1 K}(n_1)) = \theta_{2 K}(m_2) \ \ \text{in} \ \
H^1(K , W_2).
$$
Then there exists $w_2 \in W_2(\bar{K})$ such that
\begin{equation}\label{E:Append4}
\psi^0(\nu_1^0(n_1(\sigma))) = w_2^{-1} \nu_2^0(m_2(\sigma))
\sigma(w_2) \ \ \text{for all} \ \ \sigma \in
\mathrm{Gal}(\bar{K}/K).
\end{equation}
Picking $z_2 \in N_2^0(\bar{K})$ so that $\nu_2^0(z_2) = w_2$, and
setting
$$
g_2 = h_2z_2 \quad \text{and} \quad n_2(\sigma) = g_2^{-1} \sigma(g_2)
\in N_2^0(\bar{K}) \quad \text{for $\sigma \in
\mathrm{Gal}(\bar{K}/K)$},
$$
we obtain  from \eqref{E:Append4} that \eqref{E:Append2} holds. Then
$g_2$ is as required. Indeed, the fact that $n_2(\sigma) \in
N_2^0(\bar{K})$ implies that $T_2 = g_2 T_2^0 g_2^{-1}$ is defined
over $K$, and Lemma \ref{A5} yields that the morphism $\varphi(g_1 , g_2)
\colon T_1 \to T_2$ is also defined over $K$.
\end{proof}

\begin{proof}[Proof of Corollary \ref{C:BC-twins}\eqref{twin.same}] 
Suppose that $G_1$ and $G_2$
are twins, and let $V_0$ be the set of all archimedean places $v \in
V^K$ such that $G_1$ and $G_2$ are both $K_v$-anisotropic. Then for
any $v \in V^K \setminus V_0$, both $G_1$ and $G_2$ are $K_v$-split.
Then according to Proposition \ref{A2} we have
\[
\theta_{i K}(\sC_i(K)) = \ker \left( H^1(K, W_i^0) \ra \prod\nolimits_{v \in V_0} H^1(K_v, W_i^0) \right)
\]
for $i = 1, 2$, and as $\psi_0 \!: W_1^0 \ra W_2^0$ is an isomorphism, 
condition (\ref{E:Append3})
holds, and the claim follows from Lemma \ref{A6}.
\end{proof}

\begin{rmk*}
It follows from the explicit description of
the root systems of types $\textsf{B}_{\ell}$ and
$\textsf{C}_{\ell}$ that the isomorphism $\varphi$ in Lemma \ref{A4} can
be chosen so that for $t \in T_1(\bar{K})$ there exist $\lambda_1,
\ldots , \lambda_{\ell} \in \bar{K}^{\times}$ such that the values
of the roots $\alpha \in \Phi(G_1 , T_1)$ on $t$ are
$$
\lambda_i^{\pm 1}, \ \ i = 1, \ldots , \ell, \ \ \text{and} \ \
\lambda_i^{\pm 1} \cdot \lambda_j^{\pm 1}, \ \ i , j = 1, \ldots ,
\ell, \ i \neq j,
$$
and the values of the roots $\alpha \in \Phi(G_2 , T_2)$ on
$\phi(t)$ are
$$
\lambda_i^{\pm 2}, \ \ i = 1, \ldots , \ell, \ \ \text{and} \ \
\lambda_i^{\pm 1} \cdot \lambda_j^{\pm 1}, \ \ i , j = 1, \ldots ,
\ell, \ i \neq j.
$$
Then any identification of the form $\varphi(g_1 , g_2)$ also has
this property, which  was used in \cite{PrRap:fields}.

Alternatively, suppose that $G_i$ for $i = 1, 2$ is realized as
$\mathrm{SU}(A_i , \tau_i)$  as described in the beginning of \S\ref{S:BC}.
Let $E_1$ be a $(\tau_1 \otimes \mathrm{id}_{\bar{K}})$-invariant
maximal commutative \'etale $\bar{K}$-subalgebra of $A_1 \otimes_K
\bar{K}$ satisfying (2.2), and let $\sigma_1 = \tau_1 \vert_{E_1}$.
Then in the notations of \S 6, the algebra $(E'_1 , \sigma'_1)$
admits a $\bar{K}$-embedding embedding into $(A_2 \otimes_K \bar{K}
, \tau_2 \otimes \mathrm{id}_{\bar{K}})$, and we let $(E_2 ,
\sigma_2)$ the image of this embedding. It is easy to see that if we
let $T_i$ denote the maximal torus of $G_i$ defined by $(E_i ,
\sigma_i)$ then the isomorphism $T_1 \simeq T_2$ coming from the
isomorphism of algebras $(E'_1 , \sigma'_1) \simeq (E_2 , \sigma_2)$
is the same as the isomorphism coming from the description of the
root systems (cf.~the proof of Lemma \ref{A4}); in particular, it is
compatible with the natural isomorphism of the Weyl groups. So, the
assertion of Lemma \ref{A4} means that given {\it any} $K$-algebras with
involution $(A_1 , \tau_1)$ and $(A_2 , \tau_2)$ as above, there
exists a $\tau_1$-invariant maximal commutative \'etale
$K$-subalgebra $E_1$ of $A_1$ that satisfies \eqref{E:dimen} and is such that
for $\sigma_1 = \tau_1 \vert_{E_1}$, the algebra $(E'_1 , \sigma'_1)$
admits an embedding into $(A_2 , \sigma_2)$. Moreover, by Corollary \ref{C:BC-twins}\eqref{twin.same}, if the corresponding groups $G_1$ and $G_2$ are twins then the
correspondence $(E_1 , \sigma_1) \mapsto (E'_1 , \sigma'_1)$ gives a
bijection between the sets of isomorphism classes of maximal
commutative \'etale $K$-subalgebras of $(A_1 , \tau_1)$ and $(A_2 ,
\tau_2)$ that are invariant under the respective involutions and
satisfy \eqref{E:dimen}. Thus, we recover Proposition \ref{P:same-subalg}.
\end{rmk*}

\medskip
\noindent{\small{\textbf{Acknowledgements.} We thank the referee for their helpful comments.  SG's research was
partially supported by NSA grant H98230-11-1-0178 and the Charles T.~Winship Fund.  AR's
research was partially supported by NSF grant DMS-0965758 and the
Humboldt Foundation. During the preparation of the final version of
this paper, he  was visiting the Mathematics Department of the
University of Michigan as a Gehring Professor; the hospitality and
generous support of this institution are gratefully acknowledged. }}

\bibliographystyle{amsalpha}
\bibliography{skip_master}

\end{document}